\newcommand{\zed}{\Xi}
\newcommand{\INT}{[0,1]}
\newcommand{\bz}{\bf z}
\newcommand{\bg}{\bf g}
\newcommand{\DB}{\mathcal{D}}
\newcommand{\CK}{\mathcal{K}}
\newcommand{\mL}{\mathcal{L}}
\newcommand{\mZ}{\mathbb{Z}}
\newcommand\supp{\operatorname{supp}}
\newcommand\del[1]{}
\definecolor{sonst}{cmyk}{0.4,1.0,0.1,0.3}
\definecolor{yellow}{cmyk}{0.1,0.2,0.9,0.3}
\definecolor{darkblue}{rgb}{0.0,0.1,0.5}
\markboth{\today}{\today}\pagenumbering{arabic}
\newcommand{\leb}{\mbox{Leb}}
\newcommand{\mT}{\mathbb{T}}
\newcommand{\levy}{L\'evy }
\newcommand{\barray}{\begin{array}{rcl}}
\newcommand{\earray}{\end{array}}
\newcommand{\spn}{\mbox{span}}
\newcommand{\bcase}{\begin{cases}}
\newcommand{\ecase}{\end{cases}}
\newcommand{\barr}{\lk\{\begin{array}{rcl}}
\newcommand{\earr}{\end{array}\rk.}
\newcommand{\CN}{{{ \mathcal N }}}
\newcommand{\bu}{{{ \bf u }}}
\newcommand{\bv}{{{ \bf v }}}
\newcommand{\bx}{{{ \bf x }}}
\newcommand{\by}{{{ \bf y }}}
\newcommand{\CP}{{{ \mathcal P }}}
\newcommand{\CB}{{{ \mathcal B }}}
\newcommand{\CS}{{{ \mathcal S }}}
\newcommand{\CI}{{{ \mathcal I }}}
\newcommand{\CT}{{{ \mathcal T }}}
\newcommand{\QQ}{\mathbb{Q}}
\newcommand{\pmat}{\begin{pmatrix}}
\newcommand{\epmat}{\end{pmatrix}}
\newcommand{\lqq}{\lefteqn }
\newcommand{\CO}{{\mathcal{O}}}
\newcommand{\CR}{{\mathcal{R}}}
\newcommand{\lk}{\left}
\newcommand{\rk}{\right}
\newcommand{\ep} {\epsilon }
\newcommand{\be} {\begin{enumerate} }
\newcommand{\ee} {\end{enumerate} }
\newcommand{\CF}{{ \mathcal{F } }}
\newcommand{\h}{\mathscr{H}}
\newcommand{\rA}{\mathcal{L}}
\newcommand{\ve}{\mathbf{V}}
\newcommand{\mo}{\mT}
\newcommand{\el}{L}
\newcommand{\bbop}{B}
\newcommand{\bw}{\mathbf{w}}
\newcommand{\RR}{{\mathbb{R}}}
\newcommand{\DD}{{\rm I \kern -0.2em D}}
\newcommand{\dd}{{\rm I \kern -0.2em D}}
\newcommand{\NN}{{\rm I \kern -0.2em N}}
\newcommand{\PP}{{\mathbb{P}}}
\newcommand{\EE}{ \mathbb{E} }
\newcommand{\TT}{{\rm I \kern -0.2em T}}
\newcommand{\DEQS}{\begin{eqnarray*}}
\newcommand{\EEQS}{\end{eqnarray*}}
\newcommand{\DEQSZ}{\begin{eqnarray}}
\newcommand{\EEQSZ}{\end{eqnarray}}
\newcommand{\beq}{\begin{equation}}
\newcommand{\eeq}{\end{equation}}
\begin{document}

\def\JCMvol{xx}
\def\JCMno{x}
\def\JCMyear{200x}
\def\JCMreceived{Month xx, 200x}
\def\JCMrevised{}
\def\JCMaccepted{}
\setcounter{page}{1}



\def\cgeq{\raisebox{-1mm}{$\;\stackrel{>}{\sim}\;$}}
\def\cequiv{\raisebox{-1mm}{$\;\stackrel{=}{\sim}\;$}}
\def\cleq{\raisebox{-1mm}{$\;\stackrel{<}{\sim}\;$}}
\def\T{\mathop{\cal T}}
\def\esssup{\mathop{\rm esssup}}
\def\qed{\hfill$\fbox{}$}
\catcode`@=11
\newskip\plaincentering \plaincentering=0pt plus 1000pt minus 1000pt
\def\@plainlign{\tabskip=0pt\everycr={}} 
\def\eqalignno#1{\displ@y \tabskip=\plaincentering
  \halign to\displaywidth{\hfil$\@plainlign\displaystyle{##}$\tabskip=0pt
    &$\@plainlign\displaystyle{{}}##$\hfil\tabskip=\plaincentering
    &\llap{$\hbox{\rm\@plainlign##}$}\tabskip=0pt\crcr
    #1\crcr}}



%
%
%
%
%
\newcounter{gr1nn}
\newenvironment{steplist}
{\begin{list} {  ${\bf \circ}$ Step \arabic{gr1nn}:}
{\usecounter{gr1nn}
\setlength{\leftmargin}{0.9cm}
\setlength{\topsep}{0.1cm} \setlength{\itemsep}{0.2cm}
\setlength{\parsep}{0.1cm} \setlength{\itemindent}{-0.3cm}
\setlength{\parskip}{0.1cm}}} {\end{list}}
\newcounter{gr1n}
\newenvironment{numlistn}
{\begin{list} { (\roman{gr1n})}
{\usecounter{gr1n}
\setlength{\topsep}{0.1cm}
\setlength{\itemsep}{0.0cm}
\setlength{\parsep}{0.1cm}
\setlength{\itemindent}{-0.7cm}
\setlength{\parskip}{0.0cm}}}
{\end{list}}
%




\def\cgeq{\raisebox{-1mm}{$\;\stackrel{>}{\sim}\;$}}
\def\cequiv{\raisebox{-1mm}{$\;\stackrel{=}{\sim}\;$}}
\def\cleq{\raisebox{-1mm}{$\;\stackrel{<}{\sim}\;$}}
\def\T{\mathop{\cal T}}
\def\esssup{\mathop{\rm esssup}}
\def\qed{\hfill$\fbox{}$}
\catcode`@=11
\newskip\plaincentering \plaincentering=0pt plus 1000pt minus 1000pt
\def\@plainlign{\tabskip=0pt\everycr={}} 
\def\eqalignno#1{\displ@y \tabskip=\plaincentering
  \halign to\displaywidth{\hfil$\@plainlign\displaystyle{##}$\tabskip=0pt
    &$\@plainlign\displaystyle{{}}##$\hfil\tabskip=\plaincentering
    &\llap{$\hbox{\rm\@plainlign##}$}\tabskip=0pt\crcr
    #1\crcr}}




\author{E. Hausenblas}

\address{Department of Mathematics\\ Montanuniversity Leoben\\
Fr. Josefstr. 18\\ 8700 Leoben, Austria}
 \email{erika.hausenblas@unileoben.ac.at}
\author{ Paul A. Razafimandimby}
\address{Department of Mathematics and Applied Mathematics\\ University of Pretoria\\ Lynwood Road \\ Hatfield, Pretoria 0083, South Africa}
 \email{paul.razafimandimby@up.ac.za}

\date{\today}

 \title[Irreducibility and the 2Dim Stochastic Navier Stokes]{Existence of a density 
 of the 2Dim Stochastic Navier Stokes Equation driven by L\'evy processes or fractional Brownian motion}

\newtheorem{notation}{Notation}[section]

\newtheorem{assumption}{Assumption}[section]
\newtheorem{claim}{Claim}[section]
\newtheorem{lemma}{Lemma}[section]
\newtheorem{example}{Example}[section]
\newtheorem{tlemma}{Technical Lemma}[section]
\newtheorem{definition}{Definition}[section]
\newtheorem{remark}{Remark}[section]
\newtheorem{cor}{Corollary}[section]
\newtheorem{proposition}{Proposition}[section]
\newtheorem{theorem}{Theorem}[section]
\newtheorem{algorithm}{Algorithm}
\newtheorem{question}{Question}






\begin{abstract}
In this article we are interested in the regularity properties of the
probability measure induced by the solution process of the L\'evy noise or a fractional Brownian motion driven Navier Stokes Equation on the two dimensional torus $\mathbb{T}$.  We mainly investigate under which conditions on the characteristic measure of the L\'evy process or the Hurst parameter of the  fractal Brownian motion the law of the projection of  $u(t)$ onto any finite dimensional $F\subset L^2(\mathbb{T})$   is absolutely continuous with respect to the Lebesgue measure on $F$.
%
\end{abstract}

\del{\begin{classification}
65H15, 35A40,65C20, 60L20.
\end{classification}

\begin{keyword}
\end{keyword}
}

\maketitle

\section{Introduction}

We consider the Navier-Stokes equations (NSEs) subjected to the periodic boundary condition on the torus 
\begin{equation}  \label{eqn:4.1}
\hspace{2cm}\begin{cases}
\partial_tu(t) -\nu \Delta u(t) + u(t) \cdot \nabla u(t) + \nabla \mathfrak{p}(t) = \dot{\Xi}(t),\\
\nabla\cdot u(t)=0,\\
u(0)=u_0,
\end{cases}%
\end{equation}
where $u$ and $\mathfrak{p}$ are unknown vector field and scalar periodic functions in the space variable representing
 the fluid velocity and the pressure, respectively.  We assume that we are given an initial velocity $u_0$. The perturbation $\dot{\Xi}$ denotes, roughly speaking,  the Radon-Nikodym derivative of a \levy process $\Xi=L$ or a fractional Brownian motion $\Xi=B^H$. In the case when $\Xi$ is a Wiener noise the above system has been the subject of intensive mathematical studies since the pioneering work of Bensoussan and Temam. The analysis of the qualitative properties and long time behaviour of its solutions has generated several important results, see for instance \cite{brz1,AD-chapter,franco,martin1,SK+AS}, to cite a few results.
  Particularly, when
\DEQSZ 
\label{eqn:decompose}
 \Xi&=&\sum_{j=1}^\infty b_j \beta_j e_j,
\EEQSZ 
 where $(b_j)_{j \in \mathbb{N}}$ is a sequence of non-negative numbers, $(\beta_j)_{j \in \mathbb{N}}$ is a sequence of independent, identically distributed real-valued Brownian motions and $(e_j)_{j \in \mathbb{N}}$ is an orthonormal basis of the space of square integrable, periodic and divergence free functions with mean zero, the authors in \cite{deb}, \cite{armen} and \cite{mat} proved the existence of densities for the laws of finite dimensional functionals of its solutions. In these papers different methods are used to prove the existence of such densities, for instance in \cite{deb} a method based on Girsanov theorem is used and the Malliavin calculus is used in \cite{mat}. In \cite{armen} a method based on controllability of \eqref{eqn:4.1} in finite-dimensional projections and an abstract result on image of decomposable measure under analytic mappings is used.
 This method does not use the Gaussian structure of the noise as the methods in  \cite{deb} and \cite{mat}. In this paper we are mainly interested in proving the existence of densities for the laws of finite-dimensional analytic functionals of the solution of  \eqref{eqn:4.1} when the driving noise $\Xi$ is a L\'evy noise or a fractional Brownian motion. For this purpose we extend the results in  \cite{armen} to our framework.  Although we closely follow the approach in \cite{armen} the extension of the result therein to our setting is not trivial. In fact, the proof in \cite{armen} relies very much on the natural decomposability of the driving noise law in a Hilbert space $\mathscr{H}$ which is not naturally satisfied by a L\'evy process or a fractional Brownian. In fact, even if the L\'evy  noise (or fractional Brownian motion) $\Xi$ has a decomposition as in \eqref{eqn:decompose}, which is one of the main assumptions in \cite{armen}, it is not known whether  there exists a Hilbert space $\mathscr{H}$ on which the law of $\Xi$ on $\mathscr{H}$ is decomposable. In order to overcome this difficulty we prove,  by using wavelet analysis and the decomposability of measure on Banach space introduced in \cite{dineen}, that there exists a Banach space $\mathscr{H}$ with Schauder basis on which the law of $\Xi$ is decomposable. With this result at hand and using the solid controllability of  \eqref{eqn:4.1} we can prove the existence of densities for the laws of  finite-dimensional projection of the solutions of \eqref{eqn:4.1}.

{In the next section we will fix the notation and present some preliminary results. Section 3 is devoted to the statement and the proof of our  main result which will be applied to the stochastic 2D Navier-Stokes equations in the torus. In Appendix A and Appendix B we present and prove several results related to the wavelet expansion of L\'evy noise and fractional
Brownian motion, respectively.  In Appendix C  we establish a zero one law result, which is crucial for the proof of the main result, for decomposable measures.}

\section{Notations, Hypotheses and preliminary results}

For a separable Banach space $E$ we denote by
$\CB(E)$ its Borel $\sigma$--algebra. For a subspace $E_0$  of $E$ we denote by $E_1$ the subspace of $E$ such that   $E=E_0 \oplus E_1$, \textit{i.e.}, $E_1=E_0^{\perp}$. Furthermore,
%
for $A\subset E$ and $y\in E_1$ we set $$A_{(E_0,E_1)}(y)=\{ x\in E_0: x+y\in A\}.$$
Let $\mu$ be a probability measure on $(E,\CB(E))$ and $E_0$ and $E_1$ as above. We define a probability measure $\mu_{E_0}$ on $(E_0, \mathcal{B}(E_0))$ by
$$
\mu_{E_0}:\CB(E_0)\ni A\mapsto \mu(A+E_1)\in[0,1].
$$
{For a subspace $\tilde{E}_0\subset E_1$ we set}
 $$
\mu_{(\tilde{E}_0,E_1)}:\CB(\tilde{E}_0)\ni A\mapsto \mu(A+E_1)\in[0,1].
$$
If $E_0$ is finite dimensional, then we denote by  $\leb_{E_0}$ the measure defined by
$$
\leb_{E_0}: \CB({E_0}) \ni U \mapsto \mu_{E_0}(U):= \leb_{\RR^n}(\iota^{-1}(U)), $$
where $\iota$  is the isomorphism $\iota:E_0\to \RR^n$, $n=\dim(E_0)$.

\medskip
We can now introduce the following definition.
\begin{definition}\label{decomposkerneldef}
	Let $\{F_n:n\in\NN\}$ be a family of mutually disjoint closed subspaces of $E$, \text{i.e.} $F_j\cap F_k=\{0\}$, $j\not = k$. We set  $G_n := F_1\oplus\cdots\oplus F_n$ and $G^n := (F_1\oplus\cdots\oplus F_n)^ \perp$. If for any $n \in \mathbb{N}$ there exists a kernel
	$$
	l_n: G^n\times \CB( F_1\oplus \cdots \oplus F_n)\to \RR^+_0,
	$$
	such that
	$$
	\mu(A) = \int_{G^n}
	\int_{A_n(y)} l_n(\by,dx) \mu_{G^ n}(d\by),
	$$
	where $A_n(\by)=A_{(F_1\oplus\cdots\oplus F_n,G^n)}(\by)$, then we say that the measure $\mu$ is decomposable with  decomposition
	$\{F_n,G^n,l_n\}_{n=1}^\infty$.
\end{definition}
Hereafter we fix a
 separable Banach space $E$ with Schauder basis $\{ e_n:n\in\NN\}$ and we set
\DEQSZ 
\label{def:F_n}
 F_n=\{ \lambda e_n:\lambda\in \RR\}.
\EEQSZ
  We also set
\DEQSZ 
\label{def:G_n}
   G_n= F_0\oplus F_1\oplus\ldots \oplus F_n \text{ and }  G^ n=G_n^ \perp,
\EEQSZ 
along which we consider a probability kernel
$$ l_n:G^n\times \CB( G_n 
)\to [0,1]. $$
The projection onto any nontrivial subspace $F\subset E$ is denoted by $\pi_F$.
Having fixed these notations we now proceed to the statement of our standing assumptions.

\medskip

Analysing Theorem 2.2 of \cite{armen}, one can easily verify that following assumption is   essential.

\begin{assumption}\label{ass11}
Let $\mu\in \CP(E)$ be a decomposable measure with decomposition $\{F_n,G^n,l_n\}_{n=0}^\infty$.
We assume that for any $n\in\NN$ there exists a positive function  $\rho_n:G^n\times G_n\to \RR^+_0$ such that
	$\mu_{{G^n}}$--a.s.\ we have for all $U\in\CB(G_n)$
	$$
	l_n(\by,U)=\int_U \rho_n(\by,x)\, dx.
	$$
\end{assumption}

Assumption \ref{ass11} is often difficult to verify. Hence we formulate the next assumption which  is more stronger but easier to check than the above.
In fact, we prove in Lemma \ref{abscont} that the following assumption, i.e.\ Assumption \ref{ass1},  implies Assumption \ref{ass11}.

\begin{assumption}\label{ass1}
	Let  $\mu\in \CP(E)$ be a decomposable measure with decomposition
	$$\{F_n,G^n,l_n\}_{n=0}^\infty$$
	such that
	$\mu_{G_n}$ is  absolutely continuous with respect
	to the Lebesgue measure $\leb_{G_n}$.
\end{assumption}

\del{In
Lemma \ref{decompoall} we will show that if we have found one decomposition based on a Schauder basis of $\mu$ satisfying Assumption \ref{ass1},  then
for any finite dimensional subspace $F$ the measure $\mu_F$ will be absolutely continuous with respect to the Lebesgue measure $\leb(F)$.
It follows that
all other decomposition
based on a Schauder basis will also satisfy Assumption \ref{ass1}. 
}
Our third standing set of conditions is given in the following next lines.

\begin{assumption}\label{ass2}
	Let $\mu\in \CP(E)$ and let
	$\{F_n,G^n,l_n\}_{n=0}^\infty$ be a decomposition of $\mu$.
There exists a point
		\DEQSZ\label{yy}
		Y=\sum_{j=1}^\infty y_je_j\in E,
		\EEQSZ
		such that
	\begin{enumerate}
		\item
		for any $n\in\NN$ and $\delta>0$   
		$$
		\mu_{G_n}( B_{G_n}(\pi_{G_n}Y,\delta)\footnote{For a Banach space $E$ we denote by $B_E(y,\delta)$ the ball centered at  $y$ with radius $\delta$, i.e.\ $B_E(y,\delta)=\{ x\in E:|x-y|_E\le \delta\}$.})>0. 
		$$
		\item        
{for all numbers $N\in\NN$ there exists a $R_N>0$}
		 such that for all
		$x_0\in B_{G_N} (\pi_{G_N}Y,R)$,   and all $\ep>0$, there exists a $\delta>0$ such that
		$$
		\mu\lk( \lk\{ x\in E 
		\mid |x-x_0|_{E}\le \ep\rk\}\rk)\ge \delta.
		$$
	\end{enumerate}
\end{assumption}

In order to clarify the role of the above assumption we shall introduce the following definition.
 \begin{definition}
We call a set $A\in \CB^\mu(E)$ a finite zero one $\mu$--set if and only if for all $n\in\NN$
$$
\mu_{G^ n}\lk( \lk\{ y\in G^n: \mu_n(A_n(y))=0\mbox{ or } 1 \rk\}\rk)=1,
$$
where $A_n(y)=A_{(F_1\oplus\cdots\oplus F_n,G^n)}(y)$.

\end{definition}

Let $F_\infty = \cup_{n\in\NN} \{ F_0+F_1+\cdots + F_n\}$. Now, let us present the generalization of Theorem 4 in \cite{dineen}, respective
\cite[Theorem 1.6]{armen}], whose proof requires that the measure $\mu$  is decomposable and has a finite second moment (see \cite[Property (P), page 402]{armen} for the precise statement).

  \begin{theorem}\label{analyticnull}
Let $f:X\to\RR$ be an analytic function and let $\mu\in\CP(X)$ be a decomposable measure with density satisfying Assumption \ref{ass1} and  Assumption \ref{ass2}. Let $\CN_f\subset E$  be defined by  $$\CN_f:=\{ x\in E: f(x)=0\}.$$ Then,  we have
$$ \mu(\CN_f)=0 \,\mbox{or}\, 1.
$$
Furthermore, if $f$ is not identical zero, then $\mu(\CN_f)=0$.
\end{theorem}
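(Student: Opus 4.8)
Throughout write $X=E$, the fixed separable Banach space with Schauder basis $\{e_n\}$. The plan is to reduce the statement to the zero--one law for decomposable measures established in Appendix~C: I will show that the null set $\CN_f$ is a \emph{finite zero one $\mu$--set} in the sense of the definition above, which immediately yields $\mu(\CN_f)\in\{0,1\}$, and then use the support property in Assumption~\ref{ass2}(2) to exclude the value $1$ whenever $f\not\equiv0$.

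First I would check that $\CN_f$ is a finite zero one $\mu$--set. Fix $n\in\NN$ and $\by\in G^n$, and consider the slice $A_n(\by)=\{x\in G_n:\ f(x+\by)=0\}$. The map $g_{\by}:G_n\to\RR$, $g_{\by}(x):=f(x+\by)$, is the restriction of the analytic function $f$ to a finite--dimensional affine subspace, hence real--analytic on $G_n\cong\RR^{\dim G_n}$. Since $G_n$ is connected, the standard dichotomy for real--analytic functions applies: either $g_{\by}\equiv0$, so that $A_n(\by)=G_n$, or $g_{\by}\not\equiv0$, in which case its zero set has $\leb_{G_n}$--measure zero. By Assumption~\ref{ass11} (which is implied by Assumption~\ref{ass1}), for $\mu_{G^n}$--a.e.\ $\by$ the fibre measure $\mu_n(\cdot)=l_n(\by,\cdot)$ is absolutely continuous with respect to $\leb_{G_n}$, with density $\rho_n(\by,\cdot)$. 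Consequently, in the first case $\mu_n(A_n(\by))=\mu_n(G_n)=1$, while in the second $\mu_n(A_n(\by))=\int_{A_n(\by)}\rho_n(\by,x)\,dx=0$. Thus $\mu_n(A_n(\by))\in\{0,1\}$ for $\mu_{G^n}$--a.e.\ $\by$ and every $n$, so $\CN_f$ is a finite zero one $\mu$--set and the zero--one law gives $\mu(\CN_f)\in\{0,1\}$.

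For the second assertion, assume $f\not\equiv0$ and let $R_N>0$ be as in Assumption~\ref{ass2}(2). I claim there exist $N\in\NN$ and $x_0\in B_{G_N}(\pi_{G_N}Y,R_N)$ with $f(x_0)\neq0$. Indeed, if $f$ vanished identically on $B_{G_N}(\pi_{G_N}Y,R_N)$ for every $N$, then, since this ball has nonempty interior in the connected space $G_N$ and $f|_{G_N}$ is real--analytic, the identity theorem would force $f|_{G_N}\equiv0$ on all of $G_N$ for every $N$; hence $f$ would vanish on the dense subset $F_\infty=\bigcup_N G_N$ of $E$, and by continuity on all of $E$, contradicting $f\not\equiv0$. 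Fixing such an $x_0$, continuity of $f$ yields $\ep>0$ with $f(x)\neq0$ for all $x$ with $|x-x_0|_E\le\ep$, so $\{x\in E:|x-x_0|_E\le\ep\}\subset E\setminus\CN_f$. Assumption~\ref{ass2}(2) then provides $\delta>0$ with $\mu(\{x:|x-x_0|_E\le\ep\})\ge\delta$, giving $\mu(\CN_f)\le1-\delta<1$. Together with $\mu(\CN_f)\in\{0,1\}$ this forces $\mu(\CN_f)=0$.

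The step I expect to be most delicate is the first one: precisely matching the abstract notion of a finite zero one $\mu$--set to the measure--zero property of real--analytic zero sets, and carefully handling the $\mu_{G^n}$--almost everywhere qualifier that accompanies the densities $\rho_n$ from Assumption~\ref{ass11}. The genuine heavy lifting, however, is hidden in the zero--one law of Appendix~C (which in turn rests on decomposability and, presumably, on the support anchoring provided by Assumption~\ref{ass2}(1)); once that is available, the analytic input reduces to the elementary dichotomy and continuity arguments above.
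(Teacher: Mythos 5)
Your proof is correct and takes essentially the same route as the paper: the slice-wise real-analytic dichotomy combined with the Appendix C machinery (Assumption \ref{ass11}, obtained from Assumption \ref{ass1} via Lemma \ref{abscont}) to show $\CN_f$ is a finite zero one $\mu$--set, followed by Assumption \ref{ass2}(2) and continuity of $f$ to exclude the value one. You are in fact a bit more careful than the paper at two points --- you treat the degenerate case $f|_{G_n}\equiv 0$ via density of $F_\infty$ and the identity theorem, and you place $x_0$ explicitly inside $B_{G_N}(\pi_{G_N}Y,R_N)$ as Assumption \ref{ass2}(2) requires --- although, exactly like the paper, you lean on the Appendix C result (Theorem \ref{theorem4dineen}), which strictly speaking only produces an $F_\infty$--invariant set of the same measure, so the final step from ``finite zero one set'' to $\mu(\CN_f)\in\{0,1\}$ remains as implicit in your write-up as it is in the original.
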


\begin{proof}[Proof of Theorem \ref{analyticnull}:]
Let $\CN_f:=\{x\in G:f(x)=0\}$. Since $f$ is analytic, for all $n\in\NN$  for any  $y\in G^ n$   the function $f_y(x):=f(y+x)$ is also analytic.
Therefore, 
either $\leb_{G_n}(\CN_f^ n(y))=0$ or
$\leb_{G_n}(E\setminus \CN_f^ n(y))=0$, where $\CN_f^ n(y)=\{ x\in G_n: x+y\in \CN_f\}$.
Thus, $\CN_f$ is a finite zero--one $\mu$ set, and there exists a set $\tilde {\CN}_f\in\CB(E)$ such that
$\tilde {\CN}_f+F_{(\infty)}=\tilde {\CN}_f$ and $\mu(\tilde {\CN}_f)=\mu(\CN_f)$.

To prove the second part we assume that $f\not \equiv0$ and we shall show that  $\mu(\CN_f^c)>0$.
For this purpose let $n\in \mathbb{N}$ be fixed and  set  $f_n := f_{\lvert_{G_n}}$ and $Y_n:=\pi_{G_n}Y$ where $Y$ is the point from Assumption \ref{ass2}. Observe that $f_n$ is analytic and thus
$$\leb_{G_n}\biggl(G_n\setminus\left\{x\in G_n: f(x)\neq 0    \right\}\biggr)=0 .$$
We shall now distinguish two cases: $f_n(Y_n)\neq 0$ and $f_n(Y_n)=0$. For the first case, \textit{i.e.}, $f_n(Y_n)\neq 0$ we observe that by the continuity of $f_n$ there exists a number $\delta>0$ such that $f(x)\neq 0$ for all $x \in B_E(Y_n, \delta)$  from which along with item (2) of Assumption \ref{ass2} we easily conclude that $\mu(\CN_f^c)>0$.
To treat the second case, \textit{i.e}, $f_n(Y_n)=0$, we first notice that, since $f_n$ is analytic, we have
$$\leb_{G_n}\biggl(\left\{x\in G_n: f(x)= 0    \right\}\biggr)=0,$$
which implies that for any $\ep>0$ one can find $x_0\in G_n$ such that $\lvert x_0 -Y_n \rvert_E \le \ep$ and $f(x_0)\neq 0$. Since $f$ is continuous we can find a number $\delta>0$ such that $f(x)\neq 0$ for all $x\in B_E(x_0, \delta)$. Item (2) of Assumption \ref{ass2} with $\ep=\tfrac R2$ yields that $\mu(B_E(x_0,\delta)>0)$ from which it easily follows that $\mu(\CN_f^c)>0$.
\end{proof}

The above theorem will, as in \cite[Theorem 2.2]{armen}, be used to prove the existence of the density of law of the finite projection on finite dimensional space of the solution of a stochastic evolution equation driven by L\'evy noise and fractional Brownian motion.

\section{The main result}
In this section we consider an abstract stochastic evolution equation in a separable Banach space $E$
\DEQSZ\label{the-eq}\lk\{ \barray
du(t) + {\mL}u(t)\, dt + B(u(t), u(t)) &=& \dot{\Xi}(t),\quad t>0,
\\ u(0)&=& u_0\in \mathscr{H}.\earray \rk.
\EEQSZ
where the driving noise $\Xi$  is either a L\'evy process or a fractional Brownian motion, $\mL:D(\mL)\to \mathscr{H}$  and $B:\mathscr{H}\times \mathscr{H} \to \mathscr{H}$  is a densely defined bilinear operator taking values in $\mathscr{H}$. 
We assume that the above equation is uniquely solvable in $\mathscr{H}$ and we denote the solution starting from $u_0\in \mathscr{H}$ at time $t=0$ by $\{ u(t,u_0):t\ge 0\}$.
\del{ Let $(\CP_{t})_{t\ge0}$ be the Markovian semigroup associated to $u$ defined
by
$$
\lk( \CP_t f\rk) (y) := \EE \lk[ f(u(t,y))\rk],\,\,\, t\ge 0, \,\,\, y\in H,\,\,\, f\in \mathcal{B}_b(H).
$$}

In order to formulate the main result of this section we need to introduce few concepts from the control theory. For this aim, let $U\subset \mathscr{H}$  be a separable Banach space $r\ge1$ be fixed number and let us consider the following control problem
\begin{equation}\label{the-controll}
\hspace{2cm}\lk\{ \barray
du(t) + \mL u(t)\, dt + B(u(t),u(t)) &=& v(t),\quad t>0,
\\ u(0) &=& u_0\in \mathscr{H},
\earray\rk. 
\end{equation}
where $v\in L^r (0,T;U)$ is the control and $U$ 
is the control space (the trajectories of our noise will be basically belong to  $L^r(0,T; U)$ ). For a fixed time $T>0$ we denote by
\DEQSZ 
\label{mmm1}
\CR_T:\mathscr{H}\times L^r (0,T;U) &\to& \mathscr{H}
\EEQSZ 
the so called solution operator that takes each function $g\in L^r (0,T;U)$ and initial condition $u_0\in \mathscr{H}$ to the solution $u(T,u_0)$ of the system
\eqref{the-controll}.

\begin{definition}
A system is controllable in time $T>0$ for a finite dimensional subspace $F\subset \mathscr{H}$ if and only if
$$
\pi_F\CR_T(u_0, L^r (0,T;U))\supset F
$$
for any $u_0\in \mathscr{H}$.
\end{definition}

\begin{definition}
A system is solidly controllable in time $T>0$ for a finite dimensional subspace $F\subset \mathscr{H}$, if and only if
for any $R>0$ and any $u_0\in \mathscr{H}$, there exists an $\ep>0$ and a compact set $K_\ep\subset L^r (0,T;U)$
such that for any function $\Phi:K_\ep\to F$ satisfying
$$
\sup_{x\in K_\ep} |\Phi(x)-\pi_F\CR_T(u_0,x)|_F\le \ep,
$$
we have
$$
\Phi(K_\ep)\supset B_ F(R).
$$
\end{definition}

With this preliminary works the following general result can be shown.

\begin{theorem}\label{mainth}
{Let $E$ be a separable Banach space with Schauder basis $\{e_n: n \in \mathbb{N}\}$. Let $F$ be a finite dimensional subspace of $\mathscr{H}$. We assume that the embedding  $L^r(0,T;U)\hookrightarrow E$ is continuous, $\{e_n: n \in \mathbb{N}\}$ is also a Schauder basis in $L^r(0,T;U)$, and the law $\mu$ of the noise
$\dot{\Xi}$ on $E$ is decomposable on $E$ with the decomposition $\{F_n, G^n, l_n  \}_{n=0}^\infty$, where
notation used in \eqref{def:F_n} and \eqref{def:G_n} is enforced,  
satisfying Assumptions \ref{ass1} and \ref{ass2}.  For a fixed number $T>0$ we also assume that
\begin{enumerate}[label=(\textbf{A}\arabic{*})]
  \item \label{A1}the solution operator  $\CR_T$ defined in \eqref{mmm1}  which is generated by the system \eqref{the-controll} is analytic, 
  \item \label{A2} and  for any finite dimensional space $F\subset \mathscr{H}$, the system \eqref{the-controll} is solidly controllable in time $T$ for the finite dimensional space  $F$.
\end{enumerate}
Then, for any $u_0\in \mathscr{H}$ and for any finite dimensional subspace $F\subset \mathscr{H}$
there exists a density function $\rho:F\to\RR^+_0$ such that 
$$
\EE 1_\CO (\pi _F u(T,u_0)) = \int_\CO \rho(x)\, \leb_F(dx).
$$ }
\end{theorem}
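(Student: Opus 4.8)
The plan is to identify the law of $\pi_F u(T,u_0)$ as the image of $\mu$ under an analytic map, and then to combine the zero--one law of Theorem \ref{analyticnull} with solid controllability to build a density on $F$. Concretely, set $\Phi:=\pi_F\CR_T(u_0,\cdot)$; since the trajectories of the noise belong to $L^r(0,T;U)\embed E$ and the law of $\dot\Xi$ on $E$ is $\mu$, the law of $\pi_F u(T,u_0)$ is exactly the push-forward $\Phi_*\mu$. By \ref{A1} the map $\Phi:E\to F$ is analytic, so the target reduces to showing $\Phi_*\mu\ll\leb_F$, after which the density $\rho\ge0$ is furnished by Radon--Nikodym and $\EE 1_\CO(\pi_F u(T,u_0))=\int_\CO\rho\,\leb_F(dx)$.

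First I would exploit solid controllability to produce a full-rank (submersion) point on a finite-dimensional slice. Fix $n$ with $\dim G_n>d:=\dim F$. Because $\{e_k\}$ is a Schauder basis of $L^r(0,T;U)$, the partial-sum projections $\pi_{G_n}$ are uniformly bounded and converge to the identity uniformly on compact sets; hence, given the compact set $K_\ep$ and $\ep>0$ supplied by \ref{A2}, the map $\Phi\circ\pi_{G_n}$ satisfies $\sup_{x\in K_\ep}|\Phi(\pi_{G_n}x)-\pi_F\CR_T(u_0,x)|_F\le\ep$ for $n$ large, so solid controllability gives $\Phi(\pi_{G_n}K_\ep)\supset B_F(R)$. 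Thus the analytic map $\Phi|_{G_n}:G_n\to F$ has image containing a ball, and by Sard's theorem it must possess a point where its differential is onto $F$ (otherwise the image would be $\leb_F$-null).

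Next, analyticity upgrades this single regular point to a generic one on almost every shifted slice. The function $J_n(z):=\sum(\text{$d\times d$ minors of }D_{G_n}\Phi(z))^2$ is real-analytic on $E$ and, by the previous step, not identically zero on the slice $G_n$, hence not identically zero on $E$. Applying Theorem \ref{analyticnull} to $J_n$ (which is analytic, nonzero, with $\mu$ decomposable and satisfying Assumptions \ref{ass1} and \ref{ass2}) yields $\mu(\{J_n=0\})=0$. Disintegrating along the decomposition, $0=\int_{G^n}\int_{\{x:\,J_n(x+y)=0\}}\rho_n(y,x)\,dx\,\mu_{G^n}(dy)$, and since $\rho_n>0$ by Assumption \ref{ass1} this forces $\leb_{G_n}(\{x\in G_n:J_n(x+y)=0\})=0$ for $\mu_{G^n}$-a.e.\ $y$; on every such good slice $\Phi(\cdot+y):G_n\to F$ is an analytic submersion off a $\leb_{G_n}$-null set.

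Finally I would assemble the density by a change of variables along the decomposition. By Assumption \ref{ass1} and Lemma \ref{abscont} the conditional kernel is $l_n(y,dx)=\rho_n(y,x)\,dx$, absolutely continuous on $G_n$; pushing this measure forward under the submersion $\Phi(\cdot+y)$ produces, via the coarea formula, an absolutely continuous measure on $F$, and integrating the resulting slice densities against $\mu_{G^n}(dy)$ over the good slices gives $\Phi_*\mu\ll\leb_F$. The main obstacle is precisely the transfer carried out in the third paragraph: promoting the ``full rank somewhere,'' obtained through solid controllability with infinite-dimensional controls, to ``full rank almost everywhere on almost every finite-dimensional slice.'' This step is what genuinely requires the analytic zero--one law of Theorem \ref{analyticnull} together with a careful disintegration in the infinite-dimensional complementary direction $G^n$; it also demands a verification that $\Phi$ is truly analytic on the slices of $E$ (not merely on $L^r(0,T;U)$), so that Sard's theorem and the coarea change of variables are legitimately applicable.
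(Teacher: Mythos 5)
Your proposal is correct in substance, but it is organized quite differently from the paper's own proof, which is essentially a verification-and-cite argument. The paper reduces Theorem \ref{mainth} to \cite[Theorem 2.2]{armen}: it checks (Claim 1) that $\pi_F\CR_T(u_0,\cdot)$ maps a ball of some finite-dimensional subspace $G_m$ onto a set containing a ball of $F$ --- proved exactly as in your second paragraph, by letting the Schauder projections $\pi_{G_m}$ approximate the identity uniformly on the compact set $K_\ep$ furnished by solid controllability --- and (Claim 2) that $\mu$ satisfies Assumption \ref{ass11}, which follows from Assumption \ref{ass1} via Lemma \ref{abscont}; it then invokes the cited theorem as a black box. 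You instead unpack the content of that black box: Sard's theorem to produce a full-rank point on the slice, the analytic minor function $J_n$, the zero--one law of Theorem \ref{analyticnull}, disintegration along $\{F_n,G^n,l_n\}$, and the coarea/submersion argument. This is the Agrachev--Kuksin--Sarychev--Shirikyan machinery re-derived in the decomposable-Banach setting, and it buys rigor precisely where the paper is thin: \cite[Theorem 2.2]{armen} is formulated for Hilbert spaces with their notion of decomposability, and the paper applies it in a Banach space with Schauder basis without re-deriving it, so your reconstruction is closer to what a complete proof actually requires, while the paper's route buys brevity. Two caveats on your write-up. First, the inference ``since $\rho_n>0$ by Assumption \ref{ass1} this forces $\leb_{G_n}(\{x:J_n(x+y)=0\})=0$'' is not justified: Assumption \ref{ass1} yields only absolute continuity of the kernels, not strict positivity of the densities. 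The step is easily repaired, though: the disintegration of $\mu(\{J_n=0\})=0$ already gives $l_n(y,\{x:J_n(x+y)=0\})=0$ for $\mu_{G^n}$-a.e.\ $y$, which rules out $J_n(\cdot+y)\equiv 0$ (the bad set would then carry full $l_n(y,\cdot)$-mass) and hence gives Lebesgue-nullity of the slice zero set by analyticity; in any case $l_n(y,\cdot)$-nullity of the bad set is all your coarea step needs. Second, the analyticity gap you flag at the end is genuine: hypothesis \ref{A1} provides analyticity of $\CR_T$ on $L^r(0,T;U)$ only, whereas $J_n$ and Theorem \ref{analyticnull} require analyticity on $E$, where $\mu$ lives (in the L\'evy application $\mu$ charges distributions outside $L^r(0,T;U)$); this issue is present, silently, in the paper's proof as well, so it does not distinguish your argument from theirs.
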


\begin{proof}
Let us fix a finite dimensional subspace $F$ of $\mathscr{H}$ and consider the operator
$$
f: \mathscr{H}\times X\ni (u_0,\xi) \mapsto \pi_F \CR_T (u_0,\xi) \in F,
$$
where $X=L^ r(0,T;U)$, $u$ solves equation \eqref{the-eq} and $\CR_T$ is defined in \eqref{mmm1}.

{The proof of our theorem will follow from the applicability of \cite[Theorem 2.2]{armen}. Thus we just need to check that all the assumptions of \cite[Theorem 2.2]{armen} are all satisfied. For this aim it is sufficient to prove the two claims below.}

\begin{enumerate}[leftmargin=0cm,itemindent=.5cm,labelwidth=\itemindent,labelsep=0cm,align=left, label={\textbf{Claim}\arabic{*}}]
\item \
\label{Claim-1}.
There exists a finite dimensional subspace
$G_m$ of $X$
such that
for any $u_0\in \mathscr{H}$, there exists a ball $B_0\subset G_m$ and a ball  $B_F\subset F$
such that
$$
f(u_0,B_0)\supset B_F.
$$

\medskip

To prove this claim we fix a large number $R>0$ such that $u_0\in B_\mathscr{H}(R)$. 
By the definition of solidly controllability, we know that there exists an $\ep>0$ and a compact set $K_\ep\subset \mathscr{H}$ such that,  any function $\Phi:K_\ep\to \mathscr{H}$ satisfying
$$
\sup_{y\in K_\ep } | \Phi(y)-\pi_F\CR_T(u_0,y)|_F\le \ep,
$$
satisfies
$$
\Phi(K_\ep )\supset \{ y\in F: |y|_F\le R\}. 
$$
Fix $u_0\in B_\mathscr{H}(R)$, $\ep>0$ and the corresponding compact set $K_\ep$.
Since the operator
$$
\CR_T(u_0,\cdot):X\to \mathscr{H}
$$
is continuous, it is uniformly continuous on $K_\ep$, and, hence,
 there exists a $\delta_0>0$ such that
$$\lk|\CR_T(u_0,y_1)- \CR_T(u_0,y_2)\rk|_\mathscr{H}
\le \ep,\quad \forall \, y_1,y_2\in K_\ep \mbox{ with } |y_1-y_2|\le \delta_0\, .
$$
Since the function system $\{ e_n:n\in\NN\}$ is a Schauder basis of $X$, it follows that $\cup_{m\in\NN}F_m$ is a dense subset in $X$.
In particular, since $K_\ep$ is compact, for any $\delta>0$, there exists  a number $m$ such that
$$
\sup_{y\in K_\ep} \lk\| y-\pi_{G_m}y\rk\|_{X}\le \delta.
$$
Let $m\in\NN$ be sufficiently  large such that
$$
\sup_{y\in K_\ep} \lk\| y-\pi_{G_m}y\rk\|_{X}\le \delta_0,
$$
Let us define
$$
\Phi:K_\ep\to \mathscr{H}
$$
by
$$
\Phi(y) = \pi_F(\CR_T(u_0,\pi_{G_m}y)).
$$
From the consideration above, it follows that
$$
\sup_{y\in K_\ep} |\Phi(y)-\pi_F\CR_T(u_0,y)|_F\le \ep.
$$
Hence, by the solid controllability
$$
\Phi(K_\ep)\subset  \{ y\in F: |y|_F\le R\}.
$$
In particular, since $\pi_{G_m}K_\ep $ is a bounded set of $G_m$,  there exists a number $R_1>0$ such that $\{y\in G_m:|y|\le R_1\} \supset \pi_{G_m}K_\ep$. Setting $B_F:=
 \{ y\in F: |y|_F\le R\}$ and $B_1:= \{y\in G_m:|y|\le R_1\}$ we have 
$$
\CR_T( u_0,B_1 ) \supset B_F,
$$
which proves \ref{Claim-1}.

\medskip

\item\label{Claim-2}. \ The measure $\mu$ on $E$ satisfies Assumption \ref{ass11}.

\medskip

Claim 2 is easy to prove. Thanks to Lemma \ref{abscont} the measure satisfies Assumption \ref{ass2}, which is equivalent to Claim 2.
\end{enumerate}
\end{proof}


\section{Application to the 2D stochastic Navier-Stokes}

Throughout this section $\mo$ denotes the 2D torus, $\el^p(\mo)$ and $W^{m,p}$
will respectively
denote the usual Lebesgue space of $p$-integrable functions  and  Sobolev spaces. The symbol $B^s_{p,p}(\INT):=B^s_{p,p}(\INT;\mathbb{R})$ is the Besov spaces of all $\mathbb{R}$-valued functions defined on the interval $\INT$.

Let $\mathcal{V}$ be the set of periodic, divergence free and infinitely differentiable function with zero mean.
In what follows, we denote by $\h$ and $\ve$ the closures of $\mathcal{V}$ in $\el^2(\mo%
)$ and $W^{1,2}(\mo)$, respectively. We endow the space $\h$ with the $L^2$-scalar product denoted by $(\cdot, \cdot)$ and the usual $L^2$-norm denoted by $\lvert \cdot \rvert$. The space $\ve$ is equipped with the gradient norm $\lvert \nabla \cdot \rvert$. We also set $$ D(\rA)= [\h^2(\mo)]^2\cap \ve,\quad  \rA \bv= -\Pi \Delta \bv, \quad \bv \in D(\rA), $$  where   $\Pi$ is the orthogonal projection from $\el^2(\mo)$ onto $\h$.
It is well-known that the Stokes operator $\rA$ is positive self-adjoint with compact resolvent and its eigenfunctions
$\{e_1, e_2,\ldots\}$, with eigenvalues $0<\lambda_1\le \lambda_2\le \ldots$, form an orthonormal basis of $\h$.
It is also well-known that $\ve=D(\rA^\frac12)$, see \cite[Appendix A.1 of Chapter II]{Temam-RB}.
Furthermore, we see from \cite[Chapter II, Section 1.2]{Temam_2001} and \cite[Appendix A.3 of Chapter II]{Temam-RB} that
one can define a continuous bilinear  map $\bbop$ from $\ve \times \ve$ with
values in $\ve^\ast$ such that
\begin{align}
\langle
& \bbop(\bu,\bv),\bw\rangle=\int_{\mo} [\bu(z)\cdot \nabla \bv(z)]\cdot \bw(z) dz \quad \text{ for any }  \bu, \bv, \bw \in \ve,\label{DEF-B1}\\
& \langle \bbop(\bu,\bv),\bv\rangle=0, \quad \text{ for any } \bu, \bv \in \ve,\\
& \vert \langle \bbop(\bu,\bv), \bw \rangle \vert\le C_0 \Vert \bu \Vert _{\el^4} \Vert \bv \Vert_{\el^4} \Vert \bw \Vert, \text{ for } \bu,\, \bv \in \el^4(\mo),\, \bw \in \ve.
\end{align}


With all these notations the Navier-Stokes equations \eqref{eqn:4.1} can be written in the abstract form
\DEQSZ\label{navstab} 
\lk\{ \barray
\frac{d u(t)}{d t}+\kappa \rA u(t) +\bbop(u(t),u(t)) =\dot{\zed}(t),\phantom{\Big|} \\
u(0)=u_0 \in \h, \earray\rk.
\EEQSZ 
where for the sake of simplicity we assume that  $\Pi \dot{\zed}= \dot{\zed}$. The positive number $\kappa>0$ denotes the viscosity.
 Before characterizing the noise entering our system,  we introduce the
trigonometric basis in $\mathscr{H}$ by elements in $\mathbb{Z}$. Namely, we write $j-(j_1,j_2)\subset \mathbb{Z}^2$ and set
\DEQS
e_j(x) &=& \sin( jx)j^ \perp \quad \mbox{ for $j_1>0$ or $j_1=0, j_2>0$,}
\\
e_j(x) &=& \cos( jx)j^ \perp \quad \mbox{ for $j_1,0$ or $j_1=0, j_2,0$,}
\\
e_0^ 1 (x) &=& (1,0),\quad e_0^ 2 (x) = (0,1),
\EEQS
where $j^ \perp=(-j_2,j_1)$. The family $\mathcal{E}=\{ e^ j_0,e_j,i=1,2,j\in\mathbb{Z}\setminus \{0\}\}$ is a complete set of eigenfunctions for the Stokes operator which forms an orthonormal basis in $\mathscr{H}$.

For any symmetric set $\CK\subset \mathbb{Z}^ 2 $ containing $(0,0)$ we write $\CK_0=\CK$ and define $\CK^ i$ with $i\ge 1$ as the union for $\CK^ {i-1}$ and the family of vectors $l\in\mathbb{Z}^ 2 $ for which there are $m,n\in \CK^ {i-1}$ such that $ l=m+n$, $|m|\not =|n|$, and $|m\wedge n\not=0$, where $m\wedge n=m_1n_2-m_2n_1$.

\begin{definition} A symmetric subset $\CK\subset \mZ^ 2 $ containing $(0,0)$ is saturating, if and only if $\cup_{i\in\NN} \CK^ {i-1}=\mZ^ 2$.
\end{definition}
%
Throughout we set $d={\dim{\CK}}$  and denote by $\h_d$ the finite dimensional subspace of $\h$ spanned by the eigenvectors $\{e_j; j\in \CK\}$. The driving noise is either
\begin{equation}\label{Noise-Levy}
\hspace{3cm}\zed(t) = \sum_{j\in \CK} e_jl_j(t),\quad
t\ge 0,
\end{equation}
where $\{ l_j:j\in\CK\}$ is a family of  identical distributed and mutual independent  L\'evy processes with L\'evy measure $\nu_j$ over a probability space $(\Omega,\CF,\PP)$, or
\begin{equation}\label{Noise-FracBM}
\hspace{3cm}\zed(t) = \sum_{j\in \CK} e_j\beta ^H_j(t),\quad
t\ge 0,
\end{equation}
where  $\{ \beta^H_j:j\in\CK\}$ is a family  of  identical distributed and mutual independent fractal Brownian motions with Hurst parameter $H\in(\tfrac 12,1)$ over a probability space $(\Omega,\CF,\PP)$.
{The existence of a unique solution $u=\{u(t):t\ge 0\}$ to \eqref{navstab} follows from the results in \cite{ehz} for example for the case of pure jump L\'evy noise, and from \cite{nsfractal} for case of fractional Brownian motion  perturbation.}
%
%

We can now state the main results of this section. We start with the following theorem which treats the case of Navier-Stokes equations driven by L\'evy noise.
\begin{theorem}\label{coreins}
Let $\CK$ be a saturating set and assume that the noise $\zed$ entering the system \eqref{the-eq} is defined by \eqref{Noise-Levy}.
We also assume that the L\'evy measures $\nu_j$, $j=1,\ldots ,d$, are symmetric and equivalent to the Lebesgue measure on  $\RR\setminus \{0\}$ and satisfies
\DEQSZ \label{p-integrable}
\int_{|z|\le 1} |z|^ p\nu_j(dz)<\infty,
\EEQSZ 
for some $p \in(1,2)$. In addition, we assume that there exists a number $\alpha\in (0,2]$ such that
$$ \nu_j(\RR\setminus[-\ep,\ep])\sim \ep^ {-\alpha}l(\ep) \quad \mbox{as} \,\ep\to 0,
$$
for some slow varying function $l$. 
Let $u=\{u(t,u_0):t\ge 0, u_0\in \mathscr{H}\}$ be the unique solution of system  \eqref{the-eq}. Then
for any finite dimension subspace $F\subset \mathscr{H}$,  for all initial conditions $u_0\in \mathscr{H}$,
there exists a density function $\rho_{u_0}:F\to\RR^+_0$ such that
$$
\EE 1_\CO (\pi_F u(T,u_0)) = \int_\CO \rho_{u_0}(x)\, \leb_F(dx).
$$
In addition, for any sequence $\{u_n:n\in\NN\}$ with $u_n\to u_0\in \mathscr{H}$ as $n\to\infty$, we have
$$
\int_F|\rho_{u_0}(x)-\rho_{u_n}(x)|dx 
{\longrightarrow} 0 \mbox{ as }{n\to \infty}.
$$	
\end{theorem}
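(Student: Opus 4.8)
The plan is to derive the first assertion directly from the abstract Theorem \ref{mainth} and to treat the $L^1$-continuity in the initial datum as a separate, finer statement that Theorem \ref{mainth} does not by itself supply. First I would fix a finite-dimensional $F\subset\h$ and $u_0\in\h$, cast \eqref{navstab} in the abstract form \eqref{the-eq} by identifying the linear part with $\kappa\rA$ and the bilinear part with $\bbop$, and take as control space $X=L^r(0,T;\h_d)$, where $\h_d=\spn\{e_j:j\in\CK\}$ carries the finite-dimensional noise \eqref{Noise-Levy} (recall $d=\dim\CK$, so $\dot\zed$ is essentially a $d$-dimensional L\'evy object). The work then reduces to two packages: (i) exhibiting a separable Banach space $E$ with Schauder basis into which $X$ embeds continuously and on which the law $\mu$ of $\dot\zed$ is decomposable and satisfies Assumptions \ref{ass1} and \ref{ass2}; and (ii) verifying the analyticity hypothesis \ref{A1} and the solid-controllability hypothesis \ref{A2} for the Navier--Stokes dynamics.

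For package (i) I would invoke the wavelet analysis developed in Appendix A. Expanding $\dot\zed$ against a compactly supported wavelet system produces a Schauder basis $\{e_n:n\in\NN\}$ of a Besov space $E=B^{s}_{p,p}(\INT;\h_d)$, with the exponent $p\in(1,2)$ dictated by the integrability hypothesis \eqref{p-integrable}, in which $\dot\zed$ lives almost surely and which is simultaneously a Schauder basis of $X=L^r(0,T;\h_d)$; this gives the continuous embedding $X\hookrightarrow E$ required by Theorem \ref{mainth}. The wavelet coefficients of $\dot\zed$ are infinitely divisible, and the assumption that each $\nu_j$ is symmetric and equivalent to Lebesgue measure on $\RR\setminus\{0\}$ (hence of infinite activity, by the regular variation below) forces their conditional laws to be absolutely continuous; this yields a decomposition $\{F_n,G^n,l_n\}$ of $\mu$ in the sense of Definition \ref{decomposkerneldef} with $\mu_{G_n}\ll\leb_{G_n}$, i.e.\ Assumption \ref{ass1}. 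The regular variation $\nu_j(\RR\setminus[-\ep,\ep])\sim\ep^{-\alpha}l(\ep)$ governs the small-deviation probabilities of the coefficients and is exactly what furnishes a point $Y$ as in \eqref{yy} together with the positivity and small-ball lower bounds of Assumption \ref{ass2}.

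For package (ii), hypothesis \ref{A1} I would obtain from the analytic dependence of the two-dimensional Navier--Stokes solution on its forcing: since $\rA$ is positive self-adjoint with compact resolvent and $\bbop$ is bilinear and continuous from $\ve\times\ve$ into $\ve^{\ast}$, the Picard iteration defining $\CR_T(u_0,\cdot)$ in \eqref{mmm1} converges as an absolutely summable power series in the control, so $\CR_T$ is (real-)analytic. Hypothesis \ref{A2}, the solid controllability of \eqref{the-controll} in every finite-dimensional projection, is where the saturating property of $\CK$ enters: this is the Agrachev--Sarychev mechanism, by which the nonlinear interactions $m+n$ of already excited modes progressively reach all of $\mZ^2$ (precisely the definition of saturating), so that $\pi_F\CR_T(u_0,\cdot)$ can be steered onto an arbitrarily large ball of $F$ by controls supported on $\h_d$, and an approximation/compactness argument upgrades approximate to solid controllability. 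With \ref{A1}, \ref{A2} and Assumptions \ref{ass1}, \ref{ass2} in hand, Theorem \ref{mainth} produces the density $\rho_{u_0}$ and the first assertion.

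The second assertion, the $L^1$-continuity $u_n\to u_0\Rightarrow\int_F|\rho_{u_0}-\rho_{u_n}|\,d\leb_F\to0$, is \emph{not} delivered by Theorem \ref{mainth} and is, I expect, the main obstacle. The natural starting point is the continuous dependence of the Navier--Stokes solution on its initial datum, which gives $\pi_F u(T,u_n)\to\pi_F u(T,u_0)$ in probability and hence weak convergence of the induced laws on $F$; the difficulty is that weak convergence together with existence of densities does not force $L^1$-convergence of the densities. The plan is to upgrade this to convergence in total variation, which for two laws absolutely continuous with respect to $\leb_F$ coincides, up to the factor $\tfrac12$, with $\int_F|\rho_{u_0}-\rho_{u_n}|\,d\leb_F$. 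I would do this by recording that the decomposition $\{F_n,G^n,l_n\}$ of $\mu$ does not depend on $u_0$, so that $u_0$ enters only through the analytic map $f(\cdot,\xi)=\pi_F\CR_T(\cdot,\xi)$, and then combining the equicontinuity in $u_0$ of $\CR_T$ on the compact controllability sets $K_\ep$ furnished in the proof of Theorem \ref{mainth} with the uniform-in-$u_0$ density bounds for the coefficient laws from Appendix A, via a Scheffé/Vitali argument. Making this control genuinely uniform in $u_0$, so that the image-density convergence is in $L^1(F,\leb_F)$ rather than merely weak, is the delicate step on which the whole continuity statement hinges.
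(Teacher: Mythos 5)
Your proof of the first assertion follows the paper's route essentially step for step: the paper also reduces to Theorem \ref{mainth} by (a) citing \cite[Proposition A.2]{armen} (going back to \cite{kuksin}) for the analyticity \ref{A1} and \cite[Proposition A.5]{armen} (the Agrachev--Sarychev saturation argument of \cite{as}) for the solid controllability \ref{A2} --- the same mechanisms you propose to reprove via Picard iteration and saturation --- and (b) using exactly the Appendix A wavelet construction: for $s<\tfrac1p-1$ each scalar component induces a Radon measure $\mu_j$ on $B^s_{p,p}(\INT,\RR)$ (Proposition \ref{finitenesslevy}), decomposable along the multiresolution spaces $F_0=V_0$, $F_n=W_n$ with kernels from Lemma \ref{lemmaa3}, satisfying Assumptions \ref{ass1} and \ref{ass2} by Lemma \ref{lemma11} (the regular variation entering, as you say, through the small-deviation bounds of Lemmas \ref{lemma1}--\ref{lemmaa6}); the only cosmetic difference is that the paper takes the product measure $\mu=\otimes_{j\in\CK}\mu_j$ on $E=B^s_{p,p}(\INT,\RR^d)$ rather than working vector-valued from the outset.

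The genuine divergence is your handling of the second assertion, and here your proposal is more complicated than necessary while also stopping short of a proof: you correctly note that Theorem \ref{mainth} as stated yields only the density, but your Scheff\'e/Vitali upgrade is left open at precisely its hard point (uniformity in $u_0$ of the density bounds), so as written this part is a plan, not an argument. The paper closes this without any new estimate: its proof of Theorem \ref{mainth} consists of verifying the hypotheses of \cite[Theorem 2.2]{armen}, and the conclusion of that cited theorem already contains not only the existence of densities but also the continuity of $u_0\mapsto$ law of $\pi_F u(T,u_0)$ in the total variation norm, which is exactly the stated $L^1$-convergence $\int_F|\rho_{u_0}-\rho_{u_n}|\,dx\to0$. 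Since the decomposition $\{F_n,G^n,l_n\}$ and Assumptions \ref{ass1}--\ref{ass2} are properties of the noise alone, independent of $u_0$, and the controllability and analyticity checks hold uniformly over bounded sets of initial data, the second assertion comes for free from the same citation. So the repair to your write-up is simply to observe that everything you verified is independent of the initial condition and to invoke the full (total-variation) conclusion of \cite[Theorem 2.2]{armen}, rather than rebuilding uniform density bounds from Appendix A.
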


\begin{proof}
{For simplicity, let us assume $T=1$. As in the previous section we  consider map
\DEQSZ 
\label{navmmm1}
\CR_T:\mathscr{H}\times L^2 (0,T;\mathscr{H}_d) &\to& \mathscr{H}
\EEQSZ 
which is the solution operator that takes each function $g\in L^2(0,T;\mathscr{H}_d)$ and initial condition $u_0\in \mathscr{H}$ to the solution $u(T,u_0)$ of the control system
\eqref{the-controll} associated to the Navier-Stokes equations. It is proved in \cite[Proposition A.2]{armen}, see also \cite{kuksin},
that the operator $\CR_T$  is analytic. It is also known from \cite[Proposition A.5 ]{armen}, see also \cite{as}, that the system \eqref{the-controll} for the Navier-Stokes is solidly controllable in time $T$ for any finite dimensional space $F\subset \mathscr{H}$.  Hereafter we respectively identify $\mathscr{H}_d$ and $F$ to $\mathbb{R}^d$ and $\mathbb{R}^{\dim F }$.
Let $p\in (1,2)$ such that \eqref{p-integrable} is satisfied.
Let  $p'$ be the conjugate exponent to $p$ and $s<\tfrac 1p-1$. For each $j \in \CK$ let $\xi_j$ be the map defined by
\DEQS
\xi_j:B_{p',p'}^ {s}(\INT,\RR)\ni \phi \mapsto \xi_j(\phi)=\int_0^ 1 \phi(\tau)\, dl_j(\tau)\in L^ 0(\Omega;\RR),
\EEQS
and  $\mu_j$ be the cylindrical measure on $B_{p,p}^s(\INT,\RR)$ defined by
$$
\mu_j\lk( \lk\{ x\in B_{p,p}^ {s}([0,1]): (x(\phi_1),\ldots,x(\phi_n))\in C\rk\}\rk) := \PP\lk(  (\xi(\phi_1),\ldots,\xi(\phi_n))\in C \rk), \, C\in \CB(\RR^n),
$$%
where $n\in\NN$, $\phi_1,\ldots,\phi_n\in \CS(\RR )$. In Proposition \ref{finitenesslevy} we show that the cylindrical measure is actually a Radon probability measure on $B_{p,p}^ {s}([0,1])$.

 From the results of Section \ref{haar} we infer that the  probability measure $\mu_j$ on $B_{p,p}^s(\INT,\RR)$  is decomposable with decomposition
 	$\{F_n,G^n,l_n\}_{n=0}^\infty$, where $F_n$ and $l_n$ are respectively defined by $F_0=V_0$, $F_n=W_{n}$, $n\ge 2$, where  $V_0$ and $W_n$
are defined in  \eqref{exactspaceslevy} and the existence of $l_n$ is given by Lemma \ref{lemmaa3}.
With $F_n$ at hand the space $G^n$ is defined as in Definition \ref{decomposkerneldef}. Moreover, we infer from Lemma \ref{lemma11}  that for each $j$
the probability measure $\mu_j$ satisfies Assumptions \ref{ass1} and \ref{ass2}. With these observation in mind, it is not difficult to check that the product measure $\mu=\otimes_{j\in\CK}\mu_j$ satisfies Assumptions
\ref{ass1} and
\ref{ass2} on the Banach space $E:=B_{p,p}^s(\INT,\RR^d)$ where $d=\dim(\CK)$. Now, the proof of the theorem easily follows from an application of
Theorem \ref{mainth}.}
\end{proof}
We now proceed to the statement and the proof of the above theorem when the noise entering the system is a fractional Brownian motion given by \eqref{Noise-FracBM}.
\begin{theorem}
Let $\CK$ be a saturating set and assume that the  noise $\zed$ is a fractional Brownian motion defined by \eqref{Noise-FracBM} with Hurst parameter $H\in (\tfrac 12,1)$.
Let $u=\{u(t,u_0):t\ge 0, u_0\in H\}$ be the unique solution of system  \eqref{the-eq} with initial condition $u_0$. Then, for any finite dimensional space $F\subset \mathscr{H}$
and initial condition $u_0\in \h$,
there exists a density function $\rho_{u_0}:F\to\RR^+_0$ such that
$$
\EE 1_\CO (\pi_F u(T,u_0)) = \int_\CO \rho_{u_0}(x)\, \leb_F(dx).
$$
In addition, for any sequence $\{u_n:n\in\NN\}$ with $u_n\to u_0\in \h$ as $n\to\infty$, we have
$$
\int_F|\rho_{u_0}(x)-\rho_{u_n}(x)|dx 
{\longrightarrow} 0 \mbox{ as }{n\to \infty}.
$$
\end{theorem}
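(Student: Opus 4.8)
The plan is to mirror the proof of Theorem \ref{coreins} step by step, replacing every ingredient that is specific to the L\'evy structure of the noise with its fractional-Brownian analogue from Appendix B; the control-theoretic half of the argument is literally unchanged. After normalising $T=1$, I would again work with the solution operator
\[
\CR_T:\mathscr{H}\times L^2(0,T;\mathscr{H}_d)\to\mathscr{H}
\]
of the control system \eqref{the-controll} associated with \eqref{navstab}. Neither the analyticity of $\CR_T$ from \cite[Proposition A.2]{armen} nor the solid controllability of \eqref{the-controll} in time $T$ for every finite dimensional $F\subset\mathscr{H}$ from \cite[Proposition A.5]{armen} refers to the driving process, so hypotheses \ref{A1} and \ref{A2} of Theorem \ref{mainth} hold verbatim. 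Hence everything reduces to producing a separable Banach space $E$ with a Schauder basis into which $L^2(0,T;\mathscr{H}_d)$ embeds continuously and with which it shares that basis, and on which the law $\mu$ of $\dot\zed$ is a decomposable Radon measure satisfying Assumptions \ref{ass1} and \ref{ass2}.

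To build $\mu$ I would, as in the L\'evy case, identify $\mathscr{H}_d\cong\RR^d$ and treat the $d$ scalar components one at a time. For each $j\in\CK$ I set
\[
\xi_j(\phi)=\int_0^1\phi(\tau)\,d\beta^H_j(\tau),
\]
interpreted as a Wiener integral against fractional Brownian motion --- the range $H\in(\tfrac12,1)$ being exactly what makes this pairing and the path regularity convenient --- and let $\mu_j$ be the cylindrical measure it induces on a Besov space $B^s_{p,p}(\INT,\RR)$, whose indices are fixed in Appendix B according to the (negative) regularity of $\dot\beta^H_j$; here $s$ may be taken larger than in the L\'evy case, reflecting that fractional Brownian paths are more regular. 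Appendix B is then expected to supply the three facts I need: that $\mu_j$ is a genuine Radon probability measure on $B^s_{p,p}(\INT)$ and not merely cylindrical, that the wavelet expansion of $\beta^H_j$ furnishes a decomposition $\{F_n,G^n,l_n\}_{n=0}^\infty$ of $\mu_j$ in the sense of Definition \ref{decomposkerneldef}, and that $\mu_j$ satisfies Assumptions \ref{ass1} and \ref{ass2}. As before, the product $\mu=\otimes_{j\in\CK}\mu_j$ inherits both assumptions on $E:=B^s_{p,p}(\INT,\RR^d)$, and Theorem \ref{mainth} then yields the density $\rho_{u_0}$.

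The main obstacle --- and the only genuine departure from the L\'evy argument --- is the simultaneous verification of decomposability and of Assumption \ref{ass1}. For L\'evy noise the increments are independent, so conditioning on the $G^n$-coordinates leaves the first block of wavelet coefficients with an explicit product law; for fractional Brownian motion with $H\neq\tfrac12$ the increments, and therefore the wavelet coefficients, are correlated, so this product structure is unavailable. I would instead exploit the Gaussian nature of $\mu_j$: the wavelet coefficients form a jointly Gaussian family with nondegenerate covariance, so every conditional law of the $G_n$-coordinates given the $G^n$-coordinates is again Gaussian, hence absolutely continuous with a strictly positive smooth density, which is precisely Assumption \ref{ass1}. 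Assumption \ref{ass2} would then follow from the same nondegeneracy together with the full topological support of a Gaussian measure, giving positivity of $\mu_{G_n}$ on balls and the local lower bounds near the point $Y$. Making the nondegeneracy and these lower bounds uniform over the relevant balls is the delicate point, and it is exactly what the estimates of Appendix B are designed to deliver.

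Finally, the convergence $\int_F|\rho_{u_0}(x)-\rho_{u_n}(x)|\,dx\to0$ as $u_n\to u_0$ would be handled just as its analogue for Theorem \ref{coreins}: the continuity of $\CR_T(\cdot,\cdot)$ in the initial datum gives convergence in law of $\pi_Fu(T,u_n)$ to $\pi_Fu(T,u_0)$, and, all of these laws being absolutely continuous by the first part, a Scheff\'e-type argument upgrades this to $L^1$-convergence of the densities.
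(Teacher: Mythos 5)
Your existence argument follows essentially the same route as the paper's own proof: the control-theoretic half (analyticity of $\CR_T$ and solid controllability from \cite{armen}) is reused verbatim, the measure $\mu_j$ is built from $\xi_j(\phi)=\int_0^1\phi\,d\beta^H_j$, and the correlation problem for $H\neq\tfrac12$ is resolved exactly as in the paper, namely by Gaussianity: jointly Gaussian wavelet coefficients have conditional laws that are again Gaussian and equivalent to Lebesgue measure, which is Assumption \ref{ass1}, while Assumption \ref{ass2} is obtained by rerunning the small-ball argument of Lemma \ref{lemmaa6}. This is precisely the content of Appendix \ref{haartrue} (Proposition \ref{fbmex}, Remark \ref{remarkBH}, Lemma \ref{lemma11fbm}); the only details you left open are the concrete choices the paper makes there, namely the Haar system as the wavelet basis and $E=B^s_{2,2}(\INT)$ with $s\in(-\tfrac12,H-1)$, where the restriction $H>\tfrac12$ enters through the covariance estimates $\EE\,\zeta_{j,k}\zeta_{j,l}$ needed to show $\xi^H\in L^2(\Omega;B^s_{2,2}(\INT))$. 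Up to these specifics, your plan and the paper's proof coincide, including the passage to the product measure $\mu=\otimes_{j\in\CK}\mu_j$ and the final appeal to Theorem \ref{mainth}.

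There is, however, a genuine gap in your last paragraph. Scheff\'e's theorem upgrades \emph{almost-everywhere convergence of densities} to $L^1$ convergence; it does not apply when all you have is convergence in law of $\pi_F u(T,u_n)$ to $\pi_F u(T,u_0)$, even if every law in sight is absolutely continuous. Weak convergence of absolutely continuous laws does not imply $L^1$ (total variation) convergence of the densities: on $[0,1]$ the densities $f_n(x)=1+\sin(2\pi n x)$ induce measures converging weakly to Lebesgue measure, yet $\int_0^1|f_n(x)-1|\,dx=2/\pi$ does not tend to $0$. So continuity of $\CR_T$ in the initial datum plus absolute continuity is not enough to conclude. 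The paper does not argue this way: the stability statement $\int_F|\rho_{u_0}-\rho_{u_n}|\,dx\to 0$ is part of the conclusion of \cite[Theorem 2.2]{armen}, whose proof extracts convergence in variation from the solid controllability (degree-theoretic) structure together with the decomposability of $\mu$, uniformly over initial data in a ball; this is inherited here through the application of Theorem \ref{mainth}. To close your argument you must either invoke that part of \cite[Theorem 2.2]{armen} explicitly or supply an additional compactness or equicontinuity property of the family of densities $\{\rho_{u_n}\}$; the Scheff\'e shortcut as written would fail.
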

\begin{proof}
{
Let  $\CR_T: \h\times L^2 (0,T;\mathscr{H}_d)\to \h$ be  the solution operator defined by \eqref{navmmm1} in the proof of Theorem \ref{coreins}.
It satisfies the properties enumerated in the proof of Theorem \ref{coreins}. Hereafter we respectively identify $\mathscr{H}_d$ and $F$ to $\mathbb{R}^d$ and $\mathbb{R}^{\dim F }$.
Let $s\in (-\tfrac 12,\, H-1)$.
For each $j \in \CK$ let $\xi_j$ be the map defined by
  \DEQS
\xi_j:B_{2,2}^ {-s}(\INT)\ni \phi \mapsto \xi_j(\phi)=\int_0^ 1 \phi(\tau)\, d\beta ^H_j(\tau) \in L^ 2(\Omega;\RR),
\EEQS
and  $\mu_j$ be the cylindrical measure on $B_{2,2}^s(\INT,\RR)$ defined by
$$
\mu_j\lk( \lk\{ x\in B_{2,2}^ {-s}([0,1]): (x(\phi_1),\ldots,x(\phi_n))\in C\rk\}\rk) := \PP\lk(  (\xi(\phi_1),\ldots,\xi(\phi_n))\in C \rk),\, C\in\CB(\RR^n),
$$%
where $n\in\NN$, $\phi_1,\ldots,\phi_n\in \CS(\RR )$. 
 From the results of Section \ref{haartrue} we infer that the cylindrical measure $\mu_j$ on $B_{2,2}^s(\INT,\RR)$ is actually a probability measure and is decomposable with decomposition
 	$\{F_n,G^n,l_n\}_{n=0}^\infty$, where $F_n$ and $l_n$ are respectively defined by   $F_0=V_0$, $F_n=W_{n}$, $n\ge 2$,
 where  $V_0$ and $W_n$
are defined in  \eqref{exactspacesbr}.
With $F_n$ in mind we define $G^n$ as in Definition \ref{decomposkerneldef}. We also infer from Lemma \ref{lemma11}  that for each $j$
the probability measure $\mu_j$ satisfies Assumptions \ref{ass1} and \ref{ass2}. We now easily complete the proof by using a similar argument as in the proof of Theorem \ref{coreins}.}
\end{proof}


\appendix

\section{The L\'evy Noise and its Wavelet Expansion}
\label{haar}

In this section we assume that we are given a real-valued \levy process $\ell$ with $\sigma$-additive L\'evy measure $\nu$ on $\mathbb{R}\backslash \{0\}$  satisfying \eqref{p-integrable}, \textit{i.e.}
\DEQS 
\int_{|z|\le 1} |z|^ p\nu(dz)<\infty,
\EEQS 
for some $p \in(1,2)$.	
	  Our aim is to investigate the expansion of the process $\ell$ in terms of Debauchies wavelets of order $k$.  To keep this section and the article short we refer to the reader for the technical jargon about wavelets to  \cite{daub} or  \cite{triebel_fkt}.

We start introducing  the Daubechies  wavelets, see for \textit{e.g.} \cite{daub}. For such aim we fix $u>0$  and consider the Debauchies wavelets  $\psi$ having continuous bounded derivatives up to order $k$. It is known, see for \textit{e.g} \cite{daub}, that to $\psi$ we can associate scaling function denoted by $\phi$. With these in mind, %
the system of wavelets is given by
$$\psi_{j,k}:=2^{-\frac j2}\psi(2 ^jt+k)\mbox{ and }  \phi_{j,k}:=2^{-\frac j2}\phi(2 ^jt+k), \quad \, j\in\NN,\, k\in J_j,
$$
where $J^\psi _j=\{ k\in\NN: \mbox{supp}(\psi_{j,k})\cap I\not=\emptyset\}$, $J^\phi _j=\{ k\in\NN: \mbox{supp}(\phi_{j,k})\cap I\not=\emptyset\}$. The corresponding 
 multiresolution analysis is defined  by
\DEQSZ\label{exactspaceslevy}
V_n:= \spn\{ \phi_{j,k}: j=1,\ldots,n,\,\, k\in J^ \phi_j\},
\quad
W_n:= \spn\{ \psi_{n,k}:  k\in J^ \psi_n\}.
\EEQSZ
For detail on the properties of the wavelet basis we refer to  \cite[Theorem 1.58]{triebel_fkt} or to \cite{kahane}. Note that  for $s\in\RR$  the Daubechies wavelets of order $k$, with  $k>\max( s,(1-\frac 1p)_+ -s) $,  form an unconditional basis of $B^s_{p,p}(\INT)$. In particular,  for each element $f\in B^s_{p,p}(\INT)$ there exists a unique sequence
 $$\{ \lambda_{j,k}: j\in\NN, k\in J_j^ \psi\}$$ such that $f$ 
 can be written as
\DEQSZ 
\label{Wave-exp}
f=\sum_{j\in\NN} \sum_{k\in J^ \psi_j} \lambda_{j,k} \psi_{j,k} + \lambda _0\phi .
\EEQSZ 


%
Note that since we are  considering the process on the time interval $[0,1]$, we only need to sum over $J_j^\psi$. We also note that $|J_j^\psi|\sim 2^ j$.
\del{Fix in the following  $s<\frac 1p-1$, and $u>\max( s,\frac 2p+\frac 12 -s) $. Let $E=B^s_{p,p}(\INT)$. For later on, let us define the following subspaces of $E$: 
$F_0:=V_1$, $F_n=W_{n}$ and $G_n:= F_0\oplus F_1\oplus\ldots \oplus F_n$.
Let us denote the projection of of a function $f$ belonging to $ E$  onto $G_n$ by $\pi_{G_n}$
and  onto $W_n$ by $\pi_{W_n}$.
In addition, we have the following well known decomposition $G_{n+1}= W_{n}\oplus G_n=F_n\oplus G_n$.}

\medskip

\del{By  means of the multresolution analysis we can construct from  each $E $--valued random variable a martingale.
Therefore, let $X$ be an $E$--valued random variable $X$, and let us define the sequence $(M_n:n\in\NN)$  by
$$
M_n:=\EE[ X\mid \CF_n],\quad n\ge 1.
$$
Observe, $\{ M_n:n\in\NN\}$ is a martingale and $M_n= \pi_{G_n} X$, and $M_{n+1}-M_n=\pi_{W_n}X$.
The second follows, since $G_{n+1}= W_n\oplus G_n=F_n\oplus G_n$.

\medskip
}

{In the next paragraph, we will construct the probability measure induced  by a L\'evy process which will be represented as an integral with respect to a Poisson random measure. This representation is motivated  in one hand by the fact that the use of Poisson random measure simplifies many calculation. In other hand  the Poisson random measure framework  seems more general.}
We refer to \cite{apple-int}, \cite[Chapters 6-8]{PESZAT+ZABZCYK} and \cite[Chapter 4]{sato} for a precise  connection between Poisson random measures and L\'evy processes and stochastic integration with respect to them.

\medskip

Over a probability space $\mathfrak{A}=(\Omega,\CF,\PP)$, we consider a time homogenous Poisson random measure $\eta$  on $\RR$  with symmetric  intensity measure $\nu$ as above.

\begin{proposition}
 The Poisson random measure $\eta$ over a probability space $(\Omega,\CF,\PP)$ induces  a Radon probability  measure $\mu$  on $B_{p,p}^ s(\INT )$.
\end{proposition}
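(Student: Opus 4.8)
The plan is to realize $\mu$ as the law of an explicit $B^s_{p,p}(\INT)$-valued random variable built from the wavelet expansion \eqref{Wave-exp}, and to prove almost sure convergence of that random series via an $L^p$ moment estimate on its wavelet coefficients. Since the cylindrical measure $\mu$ is specified by its evaluations against test functions, it suffices to construct a genuine $B^s_{p,p}(\INT)$-valued random variable whose finite-dimensional distributions agree with this cylindrical data: its law is then a Borel probability measure extending $\mu$, and because $B^s_{p,p}(\INT)$ is a separable Banach space, hence Polish, every Borel probability measure on it is automatically Radon. I would use the wavelet characterization of \cite[Theorem 1.58]{triebel_fkt}: for Daubechies wavelets of sufficiently high order, $f\in B^s_{p,p}(\INT)$ if and only if the coefficient sequence in \eqref{Wave-exp} satisfies
\[
\|f\|_{B^s_{p,p}(\INT)}^p \sim |\lambda_0|^p + \sum_{j\in\NN} 2^{j(s+\frac12-\frac1p)p}\sum_{k\in J^\psi_j}|\lambda_{j,k}|^p .
\]
Thus it is enough to show that the random coefficients lie in this weighted sequence space almost surely; I shall in fact bound the $p$-th moment.

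A realization of $\eta$ defines the random purely atomic measure $d\ell=\int_\RR z\,\eta(\cdot,dz)$, acting on test functions by $\phi\mapsto\int_0^1\int_\RR\phi(\tau)\,z\,\eta(d\tau,dz)$; this is the $B^s_{p,p}(\INT)$-valued object whose law is $\mu$. Its wavelet coefficients are
\[
\lambda_{j,k}=\int_0^1\int_\RR\psi_{j,k}(\tau)\,z\,\eta(d\tau,dz),
\]
and I would split the inner integral at $\{|z|\le1\}$ and $\{|z|>1\}$. The large-jump part $\int_0^1\int_{|z|>1}z\,\eta(\cdot,dz)$ is a compound Poisson measure with almost surely finitely many atoms on $[0,1]$, i.e.\ a finite combination $\sum_i z_i\delta_{t_i}$ of Dirac masses; since $\delta_{t_0}\in B^s_{p,p}(\INT)$ exactly when $s<\tfrac1p-1$, this contribution lies in $B^s_{p,p}(\INT)$ almost surely. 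For the small-jump part I pass to the compensated measure $\tilde\eta$, which is legitimate here because the vanishing moment $\int\psi_{j,k}=0$ makes the compensator irrelevant for $j\ge1$ (the single scaling coefficient $\lambda_0$ being handled separately by the same estimates).

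For the compensated small-jump coefficients I would invoke the Novikov / Bichteler--Jacod $L^p$-inequality for integrals against a compensated Poisson random measure, valid for $p\in[1,2]$ (see \cite{PESZAT+ZABZCYK}), which yields
\[
\EE|\lambda_{j,k}|^p \le C_p\int_0^1\int_{|z|\le1}|\psi_{j,k}(\tau)|^p\,|z|^p\,d\tau\,\nu(dz) = C_p\,\|\psi_{j,k}\|_{L^p}^p\int_{|z|\le1}|z|^p\,\nu(dz),
\]
the last factor being finite by \eqref{p-integrable}. Collecting the $j$-dependence coming from the Besov weight $2^{j(s+\frac12-\frac1p)p}$, the $L^p$-scaling of the (normalised) wavelets, and the per-level count $|J^\psi_j|\sim 2^j$, each dyadic level contributes at the rate $2^{j(sp+p-1)}$, so that
\[
\EE\|d\ell\|_{B^s_{p,p}(\INT)}^p \lesssim \sum_{j\in\NN}2^{j(sp+p-1)}<\infty,
\]
the geometric series converging precisely because $s<\tfrac1p-1$ is equivalent to $sp+p-1<0$. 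Combining the two parts gives $d\ell\in B^s_{p,p}(\INT)$ almost surely with finite $p$-th moment, so $\mu$ is a genuine Radon probability measure on $B^s_{p,p}(\INT)$.

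The main obstacle I anticipate is the moment estimate in the range $p\in(1,2)$: below the Hilbertian exponent $p=2$ one cannot use an It\^o isometry or a plain $L^2$ argument, and one must appeal to the sharp $L^p$ inequality for purely discontinuous martingales while carefully tracking the wavelet normalisation constants so that the per-level exponent comes out exactly as $sp+p-1$ (this is what pins the threshold to $s<\tfrac1p-1$). A secondary technical point is the rigorous identification of the wavelet coefficients of the distribution $d\ell$ with the stochastic integrals displayed above, together with the use of boundary-adapted or periodised Daubechies wavelets on $\INT$, so that the characterization and the endpoint behaviour at $\{0,1\}$ are handled consistently.
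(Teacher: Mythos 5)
Your proposal is correct, and it rests on the same two pillars as the paper's own proof: the splitting of the noise at jump size $|z|=1$ into an a.s.\ finite sum of Dirac masses (which lie in $B^s_{p,p}(\INT)$ exactly when $s<\tfrac1p-1$) plus an infinitely active small-jump part, and the level-wise $L^p$ coefficient bound $\EE|\zeta_{j,k}|^p\lesssim \int_0^1|\psi_{j,k}(\tau)|^p\,d\tau\cdot\int_{|z|\le1}|z|^p\,\nu(dz)$, whose summation against the Besov weights produces the geometric series with exponent $sp+p-1<0$, i.e.\ the threshold $s<\tfrac1p-1$. Where you genuinely diverge from the paper is the limiting mechanism. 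The paper never integrates against the full small-jump part directly: it truncates at $\ep$, works with the compound Poisson measures $\eta_\ep$ (finitely many jumps, so $\xi_\ep$ is classically defined), and recovers $\mu$ as a weak limit of the cylindrical laws $\mu_\ep$, using a uniform moment bound for tightness via the compact embedding $B^{s_0}_{p,p}\hookrightarrow B^{s}_{p,p}$, together with the Cauchy estimate $\EE\bigl[|\xi_{\ep_1}-\xi_{\ep_2}|^p_{B^s_{p,p}}\bigr]\le C\min(\ep_1,\ep_2)^{2-p}$ of Proposition \ref{finitenesslevy} to identify a unique limit. You instead define the coefficients at once as compensated Poisson integrals and invoke the Bichteler--Jacod inequality for $p\in[1,2]$, obtaining a.s.\ membership and the $p$-th moment in one stroke, with Radon-ness then free since every Borel probability measure on a separable (Polish) Banach space is Radon. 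Your route is shorter and avoids the Prokhorov/tightness step; the paper's route buys an explicit approximation of $\mu$ by laws of finite jump sums with a quantified rate in $\ep$, and sidesteps any appeal to $L^p$ martingale inequalities below the Hilbertian exponent.

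One point to tighten: your justification that the compensator is irrelevant ``because $\int\psi_{j,k}=0$'' is not sound as stated, since under \eqref{p-integrable} with $p\in(1,2)$ one may have $\int_{|z|\le1}|z|\,\nu(dz)=\infty$, so the putative compensator $\bigl(\int_0^1\psi_{j,k}\,d\tau\bigr)\cdot\int_{|z|\le1}z\,\nu(dz)$ is a $0\cdot\infty$ expression. What rescues the identification is the paper's standing symmetry assumption on $\nu$: every truncated compensator $\int_{\ep<|z|\le1}z\,\nu(dz)$ vanishes identically, so the truncated compensated and uncompensated integrals coincide, and your compensated coefficients are the $L^p$-limits of the paper's $\xi_\ep(\psi_{j,k})$, giving the matching of finite-dimensional distributions you need. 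This is a fixable slip, not a gap in the argument.
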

\begin{proof}
 We will start the proof with removing jumps of size bigger than $\ep\in(0,1]$ and let $\epsilon$ converges to 0.  For this purpose we take an arbitrary constant $\ep\in(0,1]$ and define a Poisson random measure $\eta_\ep$ by
\DEQS
\eta_\ep:\CB(\RR)\times\CB(\INT) &\to& \bar \NN ,
\\
(A\times I) &\mapsto& \eta(A\cap (\RR\setminus [-\ep,\ep])\times I).
\EEQS
The family $\{ \eta_\ep:\ep\in(0,1]\}$ induces a family of cylindrical measures on $C_b(\INT)'$. 
Here, it is important that the to Poisson random measure $\eta_\ep$ corresponding  the L\'evy process can be written as a sum over finitely many jumps at certain, possibly random, jump times.
To be more precise, let $\nu_\ep$ be defined by $\nu_\ep (A)=\nu(A\cap (\RR\setminus [-\ep,\ep]))$,
$\rho_\ep=\nu(\RR\setminus [-\ep,\ep])$, let $N_\epsilon$ be a Poisson distributed random variable with parameter $\rho_\ep$, $\{ \tau^\ep_n:n=1,\ldots, N\}$ be a family of independent uniform distributed random variables on $[0,1]$, and $\{ Y_n:n=1,\ldots, N\}$ be a family of independent, $\nu_\ep/\rho_\ep$ distributed random variables. Denoting $\delta_x$ the Dirac distribution concentrated at $x$,
the Poisson random measure $\eta_\ep$ can be written as
$$
\eta_\ep (A\times I ) = \sum_{n=1}^ N \delta_{\tau_n}(I) \delta _{Y_n}(A)
$$
and for any $f\in C_b(\INT)$ the mapping
$$
\xi_\ep (f) := \int_0^ 1\int_\RR f(s) z\eta_\ep(dz,ds)=\sum_{n=1}^ N f(\tau_n)Y_n
$$
is well defined.

\medskip

Let us define the random variables
$$
\zeta^ \ep _{j,k} 
{=} \int_0^1 \int_\RR \psi_{j,k}(\tau) \, z\eta_\ep(dz,d\tau),\quad \, j\in\NN,\, k\in J_j,
$$
$$
a_0 ^\ep 
{=} \int_0^1 \int_\RR \phi_{0,0}(\tau) \, z\eta_\ep(dz,d\tau).
$$
Since the mother wavelet $\psi$ and the scaling function {$\phi $}
are  continuous, the families $\{ \zeta^ \ep _{j,k}: j\in\NN,\, k\in J_j\}\cup\{ a_0^\ep\}$
of random variables over $\mathfrak{A}$ are well defined.
In addition, by the definition of $\zeta^\ep$ and $a_0$ and the fact that the multiresolution analysis is a Schauder basis in $B^ {s}_{p,p}(\INT)$, and $\delta\in B^ {s}_{p,p}(\INT)$ (see \cite[Remark 3, p.\ 34]{runst}), we infer that $\xi_\ep$ admits a  wavelet series representation as in \eqref{Wave-exp}.

\medskip

Note that for any $C\in\CB(\RR)$,
$$ \mu_\ep( \{ x\in B_{p,p}^s(\INT): x(\psi_{j,k})\in C\}) = \PP( \zeta^ \ep _{j,k}\in C).
$$
Later on we will need the following proposition which will be proved at the end of the current proof.

\begin{proposition}\label{finitenesslevy}
Let $\nu$ be a L\'evy measure satisfying \eqref{p-integrable} for some   $p\in [1,2)$ and $\ep \in (0,1]$.
Let $$\xi_\ep:= \sum_{j=1}^ \infty   \sum_{k\in J^ \psi_j}  \zeta_{k,j} ^ \ep \psi_{j,k} + a_0^ \ep \phi_{0,0}.$$
Then,
\begin{enumerate}
  \item  for  any $s<\frac 1p-1$, there exists a $C>0$ such that
$$
\EE \lk[ |\xi_\ep|_{B_{p,p}^{s}}^ p \rk]\le C.
$$
  \item  For  any $s<\frac 1p-1$ and $\ep_1,\ep_2\in (0,1]$ we have
$$
\EE \lk[ |\xi_{\ep_1}-\xi_{\ep_2}|_{B_{p,p}^{s}}^ p \rk]\le C\min(\ep_1,\ep_2)^ {2-p}.
$$
\end{enumerate}

\end{proposition}

By the choice of $s$ and $p$, we have $B^ {-s}_{p',p'}(\INT)\hookrightarrow C_b(\INT)$ and $\eta$ is a finite measure.
Secondly, the mappings $\xi_\ep$ induces a the family of cylindrical measures $\mu_\ep$ on $B^s_{p,p}(\INT)$
defined by
$$
\mu_\ep\lk( \lk\{ x\in B_{p,p}^ {s}(\INT): (x(\phi_1),\ldots,x(\phi_n))\in C\rk\}\rk) := \PP\lk(  (\xi_\ep(\phi_1),\ldots,\xi_\ep(\phi_n))\in C \rk),
$$%
$\phi_1,\cdots,\phi_n\in (B_{p,p}^s(\INT))'=B_{p',p'}^{-s}(\INT)$, and $C\in \CB(\RR^n)$.

\medskip

We will now show that  the family of cylindrical measures $\{\mu_\ep:\ep\in (0,1]\}$ has a limit.  In fact,
the family of probability measures $\mu_\ep$   is tight on $B_{p,p}^ s(\INT)$. To show this claim  we fix a constant $s_0\in (s,\frac 1p-1)$. We firstly note  that the embedding $B_{p,p}^ {s_0}(\INT) \hookrightarrow B_{p,p}^ s(\INT)$ is compact. Secondly, the Chebyscheff inequality and Proposition \ref{finitenesslevy} give that  for any $\delta>0$
we can find a compact  $K_\delta:=\{ x\in B_{p,p}^ s(\INT): |x|_{ B_{p,p}^ {s_0}}\le \delta^ {-1/p} \}$ such that
$$
\PP\lk( \xi_\ep\not\in K_\delta\rk) \le \delta\, \EE\lk[  |\xi_\ep |^ p_{ B_{p,p}^ {s_0}}\rk]\le   C \delta.
$$
It follows that the family of probability measures $\{\mu_\ep:\ep\in(0,1]\}$ is tight on $B_{p,p}^ s(\INT)$. It even follows from Proposition \ref{finitenesslevy} that the sequence $\{\mu_{\ep_n}:\ep=\frac 1n\}$ 
forms a Cauchy sequence and the limit $\mu$ is unique.
Therefore, there exists a unique cylindrical measure $\mu$ on $B^ {s}_{p,p}(\INT)$. Since there exists a constant $C>0$ such that for all $\ep>0$
 $\EE\lk[  |\xi_\ep |^ p_{ B_{p,p}^ {s}}\rk]\le C$, it follows from the Lebesgue Dominated Convergence Theorem that $\EE\lk[  |\xi |^ p_{ B_{p,p}^ {s}}\rk]\le C$.
 Hence, $\mu$ is also a Radon probability measure  on $B^ {s}_{p,p}(\INT)$.

\bigskip

Now we shall consider the general case in which $\nu$ is assumed to satisfy  \eqref{p-integrable} for some $p \in (1,2)$. For this purpose we consider the Poisson random measures $\eta_1$ and $\eta_2$ defined by
 $$
\CB([0,\infty))\times \CB(\RR)\ni (I\times A) \mapsto  \eta_1(I\times A) :=\eta( I\times A\cap [-1,1])
$$
and
 $$
\CB([0,\infty))\times \CB(\RR)\ni (I\times A) \mapsto  \eta_2(I\times A) :=\eta( I\times A\cap\RR\setminus  [-1,1]),
$$
respectively. Since $ A\cap [-1,1] \cap A\cap\RR\setminus  [-1,1]=\emptyset$, the Poisson random measures $\eta_1$ and $\eta_2$ are independent. Hence, the two families of coefficients in the wavelet expansion $\eta_1$ and $\eta_2$ are independent too.
 In addition from the first part of the proof
$\eta_1$ induces a Radon probability measure on $B_{p,p}^ s([0,1])$.
Since the process
$$
L_t^ 2:= \int_0^t \int_\RR z\eta_2(dz,ds)
$$
can be written as a finite sum over jumps happen at certain, possibly random, times  within the interval $\INT$, $\dot{L}_t^2$ consist of a sum over finitely many Dirac distributions.
Since any Dirac distributions belong to $B_{p,p}^ s(\INT)$, $\dot{L}_t^2$  is an element of $B_{p,p}^ s(\INT)$ and induces a probability measure on $B_{p,p}^ s(\INT)$.
 Hence, $\eta$ itself induces a Radon probability measure on $B_{p,p}^ s([0,1])$.
\end{proof}
\begin{proof}[Proof of Proposition \ref{finitenesslevy}:]
We recall that
$\int |z|^ p \nu(dz)<\infty$ for some $p\in (1,2)$.
{By the definition of the norm we get 
\DEQS
\EE |\xi_\ep|_{B_{p,p}^{s}}^ p
 & \sim &\EE
\sum_{j=1}^ \infty  2^ {j(s-\frac 1p)p} \sum_{k\in J^ \psi_j} \lk| \zeta_{k,j}^\ep\rk|^ p 2^ {j\frac p2 }
\EEQS
Since
\DEQS
\EE|\zeta^\ep _{j,k}|^ p &\le & C_\nu  \int_0^1  |\psi_{j,k}(s) |^ p \, ds = 2^ {\frac {jp}2}
2 ^ {-j} ,
\EEQS
we infer that there exists a constant $C>0$ such that
\DEQS
\EE |\xi_\ep|_{B_{p,p}^{s}}^ p
& \le C  & \sum_{j=1}^ \infty 2 ^{j(ps-1)}2^ j 2 ^{j(\frac p2-1)} 2^ {j\frac p2 }
 \le C \sum_{j=1}^ \infty 2 ^{j(ps+\frac p2 -1+\frac p2)}
,
 \EEQS
 which is finite for $s<\frac 1p-1$.}

\end{proof}

Let us denote the {Radon probability measure} induced by $\eta$ on $B_{p,p}^s(\INT)$ by $\mu$
and let us  define the mapping
\DEQSZ\label{xidef}
\xi:B_{p',p'}^ {-s}([0,1])\ni \phi \mapsto \xi(\phi)=\int_0^ 1 \int_{\RR}\phi(\tau)\, z\, \eta(dz,d\tau).
\EEQSZ
This mapping is well defined thanks to the above  calculation.

\medskip
We are now interested in the properties of the decomposition of $\mu$ by 
 the multiresolution analysis.
In particular, we will show that for any $n\in\NN$, the probability measure $\mu_{G_n}$ is equivalent to the Lebesgue measure.

We firstly note that since $V_n=W_n\otimes W_{n-1}\otimes \cdots \otimes W_1\otimes V_0$, given the coefficients $\{ \zeta_{j,k}: j=1,\ldots,n, \,\,k\in J^ \psi_j\}\cup\{ a_0\}$, one knows the
coefficient of $\phi_{n+1,k}$.
 For $k\in J_{n+1}^ \phi$  let us denote $\gamma_{n,k}$ the coefficients of $\phi_{n+1,k}$.
In particular,  we have
$$
\gamma_{n,k}:= \int_0^1 \int_\RR \phi_{n,k}(t)z\eta(dz,dt),
$$
 which implies that
$$
\pi_{G_n} \xi = \sum_{k\in J^ \phi_n}\gamma_{n,k} \phi_{n,k}.
$$
Let us now denote  by $\bz^n$ and  $\bg^n$  the random  vectors  $(\zeta_{n,0} ,\zeta_{n,1} ,\ldots ,\zeta_{n,|J^ \phi_n|} )$
and $(\gamma_{n,1} ,\gamma_{n,2} ,\ldots ,\gamma_{n,|J^ \psi_n|} )$, respectively.
Finally, 
for a function  $f:\INT\to\RR$  we write
$$
\xi(f) := \int_0^1 \int_\RR  f(s) z \, \eta(dz,ds).
$$
\begin{lemma}\label{lemaa1}
Let $f:[0,1]\to\RR$ be a mapping such that there exists constants $\delta>0$ and $t_1,t_2\in[0,1]$, $t_1<t_2$  such that
$|f(t)|\ge \delta$ for all $t\in[t_1,t_2]$. Then
\begin{enumerate}
  \item $\supp(\xi(f))=\RR$;
  \item the law of $\xi(f)$ is absolutely continuous with respect to the Lebesgue measure.
\end{enumerate}
\end{lemma}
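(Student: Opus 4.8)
The random variable $\xi(f)=\int_0^1\int_\RR f(s)\,z\,\eta(dz,ds)$ is a Poissonian integral, hence infinitely divisible, and the plan is to treat its two properties by complementary tools: absolute continuity (item (2)) through the decay of its characteristic function, and the support statement (item (1)) through the jump structure of $\eta$. First I would record the characteristic function. Since $\nu$ is symmetric the odd part of the integrand integrates to zero against $\nu$, so the exponential formula for Poisson random measures gives, for every $\theta\in\RR$,
\[
\EE\,e^{i\theta\xi(f)}=\exp\Big(\int_0^1\!\!\int_\RR\big(\cos(\theta f(s)z)-1\big)\,\nu(dz)\,ds\Big)=:e^{\Psi(\theta)},
\]
a real, positive, $[0,1]$–valued function (the compensation of the small jumps is automatic by symmetry, so no integrability problem in $z$ near the origin arises once $\cos(\cdot)-1$ replaces $e^{i\cdot}-1$).

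For item (2) I would show $e^{\Psi}\in L^1(\RR,d\theta)$, so that Fourier inversion produces a bounded continuous density and, a fortiori, absolute continuity. Here the hypotheses on $\nu$ enter decisively: from the regular variation $\nu(\RR\setminus[-\ep,\ep])\sim\ep^{-\al}l(\ep)$ together with a classical Tauberian estimate one obtains a lower bound $\int_\RR(1-\cos(az))\,\nu(dz)\ge c\,|a|^{\al}\,l(1/|a|)$ for all large $|a|$. Restricting the $s$–integral defining $\Psi$ to $[t_1,t_2]$ and using $\delta\le|f(s)|\le M:=\|f\|_\infty<\infty$ there (finiteness of $M$ being automatic for the $f=\phi\in B^{-s}_{p',p'}(\INT)\hookrightarrow C_b(\INT)$ arising in the application), the slow variation of $l$ makes the bound uniform in $s$, whence
\[
\Psi(\theta)\le-\int_{t_1}^{t_2}\!\!\int_\RR\big(1-\cos(\theta f(s)z)\big)\,\nu(dz)\,ds\le-c\,\delta^{\al}(t_2-t_1)\,|\theta|^{\al}\,l(1/|\theta|)
\]
for large $|\theta|$. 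Since $\al>0$, the right-hand side tends to $-\infty$ faster than any logarithm, so $e^{\Psi(\theta)}$ decays super-polynomially and is integrable, giving the density. I expect this Tauberian step — converting the tail asymptotics of $\nu$ into growth of the characteristic exponent, uniformly over the range $|f(s)|\in[\delta,M]$, and handling the borderline $\al=2$ where only a slowly varying factor survives — to be the main technical obstacle.

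For item (1) I would argue directly from the Poisson structure rather than from positivity of the density. Fix a target $x_0\in\RR$ and a radius $r>0$; it suffices to show $\PP(|\xi(f)-x_0|<r)>0$. Splitting the jumps in $[t_1,t_2]$ at a threshold $\rho$, write $\xi(f)=A_\rho+R_\rho$, where $A_\rho$ is the finite compound-Poisson sum of jumps of size $|z|>\rho$ and $R_\rho$ the contribution of the small jumps; these are independent, and the estimate $\EE|R_\rho|^{p}\le C\int_{|z|\le\rho}|z|^{p}\nu(dz)\to0$ (from Proposition \ref{finitenesslevy}) shows $\PP(|R_\rho|<r/2)\ge\tfrac12$ once $\rho$ is small. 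On $[t_1,t_2]$ one of the sets $\{f\ge\delta\}$, $\{f\le-\delta\}$ has positive Lebesgue measure; call it $I$. Shrinking $\rho$ so that $\rho M<r/2$, the case $|x_0|<r/2$ is handled by the event of no large jump ($\PP(N=0)>0$, on which $A_\rho=0$) together with the concentration of $R_\rho$, while for $|x_0|\ge r/2$ the event of exactly one large jump, at a time in $I$ and of size $z\approx x_0/f(s)$ (which satisfies $|z|>\rho$ by the choice of $\rho$), has positive probability because $\nu$ is equivalent to the Lebesgue measure and $I$ has positive measure. In either case $\PP(A_\rho\in(x_0-r/2,x_0+r/2))>0$, and independence of $R_\rho$ yields $\PP(|\xi(f)-x_0|<r)>0$. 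As every open interval is charged, $\supp(\xi(f))=\RR$. Alternatively, item (1) is a consequence of the general support theorem for symmetric infinitely divisible laws, since the Lévy measure $\Pi_f(dx)=\int_0^1(\nu\circ(f(s)\,\cdot)^{-1})(dx)\,ds$ of $\xi(f)$ has full support and infinite total mass.
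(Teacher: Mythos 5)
Your proposal is sound, but it takes a genuinely different route from the paper, whose entire proof is two citations: the paper introduces the image L\'evy measure $\nu_{t_1,t_2}(B)=\int_{t_1}^{t_2}\int_\RR 1_B(f(t)z)\,\nu(dz)\,dt$, observes that $\xi(f1_{[t_1,t_2]})$ is infinitely divisible with this L\'evy measure, and quotes \cite[Corollary 24.4]{sato} for item (1) and \cite[Theorem 27.7]{sato} (absolute continuity of an infinitely divisible law whose L\'evy measure has absolutely continuous part of infinite total mass) for item (2); both properties then pass to $\xi(f)$ by convolving with the independent remainder $\xi(f1_{[0,1]\setminus[t_1,t_2]})$, a step the paper leaves implicit. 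You instead prove (2) quantitatively: Karamata plus $1-\cos x\ge cx^2$ on $|x|\le1$ and $\int_{|z|\le\ep}z^2\nu(dz)\ge c\,\ep^2\nu(\RR\setminus[-\ep,\ep])$ turn the tail asymptotics into the claimed lower bound for all $\al\in(0,2]$, the uniform convergence theorem for slowly varying functions handles the range $|f(s)|\in[\delta,M]$, and integrability of the characteristic function follows; and you prove (1) by an explicit one-big-jump construction. The trade-offs are real: your route for (2) uses the regular-variation hypothesis but not absolute continuity of $\nu$, and delivers more than the lemma asserts (a bounded continuous density), whereas the paper's route needs $\nu$ absolutely continuous with infinite mass but no regular variation --- note that both proofs silently draw on standing assumptions of Theorem \ref{coreins} not repeated in the lemma, without which the statement is false (for a finite symmetric $\nu$ the law of $\xi(f)$ has an atom at $0$); likewise your direct argument for (1) uses equivalence of $\nu$ to Lebesgue, which is more than the paper's citation needs (symmetry plus infinite activity of $\nu_{t_1,t_2}$ already give full support), though you mention that alternative yourself. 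Two slips to repair in a final write-up: the phrase ``splitting the jumps in $[t_1,t_2]$'' must read ``in $[0,1]$'' --- otherwise $R_\rho$ contains the large jumps outside $[t_1,t_2]$ and your moment bound for it fails, while your displayed estimate shows you intend the global split --- and since the admissible jump size in the one-jump event depends on the jump time, the positivity claim should be stated for the measure $ds\otimes\nu$ of the set $\lk\{(s,z): s\in I,\ f(s)z\in\lk(x_0-\tfrac r2,\,x_0+\tfrac r2\rk),\ |z|>\rho\rk\}$, whose $z$-sections are intervals of length at least $r/M$ avoiding $[-\rho,\rho]$ by your choice $\rho M<r/2$.
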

\begin{proof}
Let us define the following L\'evy measure
$$
\nu_{t_1,t_2}: \CB(\RR)\ni B \mapsto \int_{t_1}^{t_2} \int_\RR 1_{B}(f(t)z) \nu(dz)\, dt.
$$
Then $\xi (f1_{[t_1,t_2]})$ is an infinite divisible random variable, and item (i) follows from  \cite[Corollary 24.4]{sato}. Item (ii) follows from  \cite[Theorem 27.7]{sato}.
\end{proof}

\begin{lemma}\label{exdensity}
For any $n\ge 1$, the measure
\DEQSZ\label{inddensity}
\CB(\RR^{|J^\phi_{n+1}|})\ni U \mapsto \PP\lk( \mathbf{g}^{n+1} 
\in U\rk)
\EEQSZ
is equivalent to the Lebesgue measure on $\RR^{|J^\phi_{n+1}|}$.
\end{lemma}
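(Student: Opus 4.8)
The plan is to exploit that $\bg^{n+1}$ is a vector of Poisson integrals and hence an infinitely divisible random vector on $\RR^m$, with $m:=|J^\phi_{n+1}|$. Writing $v(t):=(\phi_{n+1,k}(t))_{k\in J^\phi_{n+1}}\in\RR^m$, we have $\bg^{n+1}=\int_0^1\int_\RR v(t)\,z\,\eta(dz,dt)$, so each jump $(\tau_i,z_i)$ of $\eta$ contributes $v(\tau_i)z_i$ and $\bg^{n+1}=\sum_i v(\tau_i)z_i$. To prove equivalence with $\leb_{\RR^m}$ I must show (a) the law of $\bg^{n+1}$ is absolutely continuous, and (b) its density is strictly positive a.e. The first, nondegeneracy, input is linear-algebraic: the Daubechies scaling functions $\{\phi_{n+1,k}:k\in J^\phi_{n+1}\}$ are continuous and linearly independent (they are part of a Riesz basis of $V_{n+1}$), so there exist points $t_1<\cdots<t_m$ in $\INT$ for which the matrix $V=[\phi_{n+1,k}(t_i)]_{i,k}$ (whose $i$-th row is $v(t_i)^\top$) is invertible; by continuity of the determinant there are disjoint closed subintervals $I_i\ni t_i$ such that $V(s_1,\dots,s_m):=[\phi_{n+1,k}(s_i)]_{i,k}$ is invertible for every $(s_1,\dots,s_m)\in I_1\times\cdots\times I_m$.

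For (a) I would condition on everything except the sizes of $m$ distinguished jumps. Since $\nu(\{|z|\ge\delta\})<\infty$ for every $\delta>0$ while $\nu$ has infinite mass near $0$ (the tail assumption forces infinite activity), for each $j$ there is almost surely a unique jump of $\eta$ in $I_j$ of largest modulus; denote it $(\tau_j,z_j)$ and let $\hat\eta_j$ be $\eta$ restricted to $I_j$ with this one atom removed. Let $\mathcal C$ be the $\sigma$-field generated by the $\tau_j$, by $\hat\eta_1,\dots,\hat\eta_m$, and by the restriction of $\eta$ to the complementary time set $I_0:=\INT\setminus\bigcup_jI_j$. Then
$$
\bg^{n+1}=\sum_{j=1}^m v(\tau_j)z_j+R,\qquad R:=\sum_{j=1}^m\int_{I_j}\!\!\int_\RR v(t)z\,\hat\eta_j(dz,dt)+\int_{I_0}\!\!\int_\RR v(t)z\,\eta(dz,dt),
$$
where $R$ and the $\tau_j$ are $\mathcal C$-measurable. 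Because $\nu$ is equivalent to $\leb$ on $\RR\setminus\{0\}$, the conditional law of the selected size $z_j$ given $\mathcal C$ is supported on $\{|z|>c_j\}$ (with $c_j$ the second-largest modulus in $I_j$, itself $\mathcal C$-measurable) and inherits absolute continuity from $\nu$; the $z_j$ being attached to disjoint time intervals, the conditional law of $(z_1,\dots,z_m)$ given $\mathcal C$ is a product of absolutely continuous measures, hence absolutely continuous on $\RR^m$. Since, given $\mathcal C$, we have $\bg^{n+1}=V(\tau_1,\dots,\tau_m)^\top(z_1,\dots,z_m)^\top+R$ with $V(\tau_1,\dots,\tau_m)$ invertible and $R$ constant, the conditional law of $\bg^{n+1}$ is the image of an absolutely continuous law under an invertible affine map, hence absolutely continuous; averaging over $\mathcal C$, an almost sure conditioning, yields absolute continuity of $\bg^{n+1}$ itself. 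This device is the multidimensional replacement for the one-dimensional Lemma \ref{lemaa1}.

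For (b) I would first record that $\supp\bg^{n+1}=\RR^m$. Indeed $\bg^{n+1}$ is a symmetric, driftless, purely non-Gaussian infinitely divisible vector whose Lévy measure $\Lambda$ is the image of $\nu\otimes\leb_{\INT}$ under $(t,z)\mapsto v(t)z$; since $\nu$ has full support, $\supp\Lambda=\overline{\bigcup_{t\in\INT}\RR\,v(t)}$ contains the lines $\RR v(t_1),\dots,\RR v(t_m)$, which span $\RR^m$, so the closed group generated by $\supp\Lambda$ is $\RR^m$, and the multidimensional support description for infinitely divisible laws (the vector analogue of the \cite[Corollary 24.4]{sato} used in Lemma \ref{lemaa1}) gives $\supp\bg^{n+1}=\RR^m$. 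To upgrade full support to positivity of the density a.e. I would return to the conditional density $f(x\mid\mathcal C)=|\det V(\tau)|^{-1}\prod_j q_j\big((V(\tau)^{-1}(x-R))_j\big)$, where $q_j$ is the (a.e. positive) conditional density of $z_j$; for fixed $x$ the quantity $f(x)=\EE[f(x\mid\mathcal C)]$ is positive on the $\mathcal C$-event where all the $c_j$ are small and $R$ places $V(\tau)^{-1}(x-R)$ off the coordinate hyperplanes. Both constraints carry positive probability — arbitrarily small second maxima $c_j$ occur with positive probability, and $R$ has full support in $\RR^m$ by the same group argument — so $f(x)>0$ for every $x$, giving $\leb_{\RR^m}\ll\mathrm{Law}(\bg^{n+1})$ and hence the asserted equivalence.

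The main obstacle is step (a): making the largest-jump conditioning rigorous, i.e.\ justifying by Palm theory that after removing the single largest-modulus atom in each $I_j$ the conditional law of the removed sizes is a product of absolutely continuous measures, and that the stochastic integral defining $R$ still converges and stays $\mathcal C$-measurable after this bounded modification. A secondary difficulty lies in the covering argument of (b): one must verify that the randomness of $R$ together with arbitrarily small second maxima $c_j$ actually reaches a neighbourhood of every prescribed $x$, and it is precisely here that the equivalence of $\nu$ with Lebesgue (rather than mere absolute continuity) is used.
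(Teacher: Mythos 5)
Your strategy is correct in outline, but it is a genuinely different and much heavier route than the one the paper takes. The paper never leaves dimension one: it orders the coordinates $\gamma_{n+1,k}=\xi(\phi_{n+1,k})$ and performs an induction over $k\in J^\phi_{n+1}$, splitting each new scaling function as $\phi_{n+1,k+1}=f_1+f_2$ with $f_1$ bounded away from zero on an interval disjoint from the supports of all previously handled functions. Then $\xi(f_1)$ is independent of the earlier coordinates and of $\xi(f_2)$ (Poisson integrals over disjoint time sets), its one-dimensional law is equivalent to Lebesgue by Lemma \ref{lemaa1}, and the joint law in one more dimension is upgraded via stability of equivalence under convolution with an independent factor, \cite[Lemma 27.1]{sato}. (As written the paper's proof asserts adjacent $\phi_{n+1,k}$ have \emph{disjoint} supports; for Daubechies wavelets of order $\ge 2$ they overlap, which is precisely why the split $f_1+f_2$ is needed — the intended structure is clear.) Your scheme instead works in $\RR^m$ directly: an invertible evaluation matrix $V(\tau)$, conditioning away all of $\eta$ except one distinguished jump per interval, an affine change of variables for absolute continuity, and a multidimensional support theorem plus a covering argument for positivity. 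What your approach buys is independence from the shift structure of the basis — it would work for any continuous, pointwise linearly independent family $t\mapsto v(t)$ — while the paper's induction buys brevity: only one-dimensional infinite-divisibility facts (Sato's Corollary 24.4 and Theorem 27.7, via Lemma \ref{lemaa1}) and no conditioning whatsoever.

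The two steps you flag at the end are genuine debts, though both are repairable. For (a), you do not need Palm theory for the largest jump: a threshold decomposition is simpler and avoids it entirely. For $k\in\NN$ let $B_k$ be the event that every $I_j$ contains a jump of modulus $>1/k$; by infinite activity (forced by the tail condition on $\nu$) the $B_k$ increase to an almost sure event. On $B_k$, condition on the restriction of $\eta$ to $\INT\times\{|z|\le 1/k\}$ together with the times and numbers of the big jumps in each $I_j$: given this, the big jump sizes are i.i.d.\ with law $\nu$ restricted to $\{|z|>1/k\}$ and normalized, which is absolutely continuous since $\nu\ll\leb$; your invertible affine-image argument then shows the law of $\bg^{n+1}$ restricted to each $B_k$ is absolutely continuous, and the countable union gives (a). For (b), the needed joint positivity is delicate as you state it: $R$ and the second maxima $c_j$ are dependent, so you should isolate the $I_0$-contribution to $R$, which is independent of all the data in $\bigcup_j I_j$, and verify it has full support in $\RR^m$ — this requires that $\{v(t):t\in I_0\}$ still spans $\RR^m$, which you can arrange by shrinking the $I_j$ (a compactness argument on the coefficient sphere shows no nontrivial combination of the $\phi_{n+1,k}$ is supported in $\bigcup_j I_j$ once these are small). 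Note also that equivalence only requires the density to be positive $\leb$-a.e., so you may weaken "for every $x$" and lighten the covering step. The paper's proof sidesteps both difficulties because adding an \emph{independent} one-dimensional summand whose law is equivalent to Lebesgue preserves equivalence of the joint law with no conditioning at all.
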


\begin{proof}
This follows from the fact that for all $k =\min(J^\phi_{n+1}), \ldots, \max(J^\phi_{n+1})-1$, the functions $\phi_{n+1,k}$ and $\phi_{n+1,k+1}$
have disjoint supports. Let us write $\phi_{n+1,k+1}=f_1+f_2$ with $\supp(f_1)\cap \supp(\phi_{n+1,k})
=\emptyset$, $\supp(f_1)$  is an interval $[a,b]$,  $\{ s:f_2(s)>0\}\cap [a,b]=\emptyset$, and $f_1$ is bounded away from zero.
Then $\xi(f_1)$ and  $\xi(f_2)$ are independent, and so are $\xi(f_1)$ and   $\xi( \phi_{n+1,k})$ .
In addition, by Lemma \ref{lemaa1} the law of $\xi(f_1)$  is equivalent to the Lebesgue measure.
Hence, from  \cite[Lemma 27.1-(iii)]{sato} it follows that
the law of the sum of the random variables $\xi(f_1)$ and $\xi(f_2+\phi_{n+1,k+1})$
is also equivalent to the Lebesgue measure.
Now, one easily prove  the assertion by an induction starting at $k=\min(J^\phi_{n+1})$.
\end{proof}

\begin{lemma}\label{lemmaa3}
For each $U\in \CB(G_n)$ and ${\by}\in \RR^{|J^\psi_n|}$, the conditioned measure
\DEQSZ\label{ldefined}
\CB(|J^\psi_n| )\ni U\mapsto l_n({\by},U)=\PP\lk({\bz}^n\in U\mid {\bg}^n ={\by}\rk)
\EEQSZ
is equivalent to the Lebesgue measure.
\end{lemma}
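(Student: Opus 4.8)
The plan is to deduce the statement from Lemma \ref{exdensity} by using the invertibility of the one-step discrete wavelet transform, and then to read off the conditional law from a joint density that is strictly positive almost everywhere.

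First I would record the linear-algebraic relation furnished by the multiresolution analysis. Because $V_{n+1}=V_n\oplus W_n$, the projection $\pi_{V_{n+1}}\xi$ admits two coordinate descriptions: the scaling coordinates $\{\gamma_{n+1,k}:k\in J^\phi_{n+1}\}=\bg^{n+1}$ on the one hand, and the mixed coordinates $\{\gamma_{n,k}:k\in J^\phi_n\}\cup\{\zeta_{n,k}:k\in J^\psi_n\}=(\bg^n,\bz^n)$ on the other. The passage between them is the orthonormal, hence invertible, quadrature-mirror-filter map $T_n$, so that $\bg^{n+1}=T_n(\bg^n,\bz^n)$ with $T_n$ a linear bijection between spaces of equal dimension; the numerical identity $|J^\phi_{n+1}|=|J^\phi_n|+|J^\psi_n|$ is precisely the content of $V_{n+1}=V_n\oplus W_n$. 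By Lemma \ref{exdensity} the law of $\bg^{n+1}$ is equivalent to $\leb$ on $\RR^{|J^\phi_{n+1}|}$, and since a linear isomorphism carries $\leb$-null sets to $\leb$-null sets in both directions, the law of $(\bg^n,\bz^n)=T_n^{-1}\bg^{n+1}$ is again equivalent to $\leb$ on the product space. Write $p(\by,x)$ for its joint density, so that $p>0$ holds $\leb$-a.e.

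Finally I would disintegrate. For the law of $\bg^n$-almost every $\by$ the marginal $p_{\bg^n}(\by)=\int p(\by,x)\,dx$ is finite and strictly positive, and the regular conditional law of $\bz^n$ given $\bg^n=\by$ then has density $x\mapsto p(\by,x)/p_{\bg^n}(\by)$, which is exactly $l_n(\by,\cdot)$. Applying Tonelli to the $\leb$-null set $\{p=0\}$ shows that for $\leb$-a.e.\ $\by$ the slice $\{x:p(\by,x)=0\}$ is $\leb$-null, so the conditional density is strictly positive a.e.\ in $x$ and $l_n(\by,\cdot)$ is equivalent to $\leb$. Since the marginal law of $\bg^n$ is itself equivalent to $\leb$ (again Lemma \ref{exdensity}), the qualifiers ``$\leb$-a.e.'' and ``$\bg^n$-a.e.'' describe the same null sets, which is all that the decomposition kernel of Definition \ref{decomposkerneldef} requires. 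The main obstacle is the first step: verifying that the passage from scale $n+1$ to scale $n$ is genuinely an invertible linear map on $\INT$, i.e.\ controlling the boundary-corrected Daubechies wavelets at the endpoints of $\INT$ so that the two coordinate systems have the same finite dimension and $T_n$ stays square and invertible. Everything after that is a soft disintegration resting entirely on Lemma \ref{exdensity}.
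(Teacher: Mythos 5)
Your proposal is correct and takes essentially the same route as the paper: both exploit the invertible one-step wavelet transform coming from $V_{n+1}=V_n\oplus W_n$ (the paper via the refinement relations \eqref{map1}--\eqref{map2} and the induced maps $\mathcal{S}$, $\mathcal{T}$) to transfer the conclusion of Lemma \ref{exdensity} from $\bg^{n+1}$ to the pair $(\bg^n,\bz^n)$, and then identify $l_n(\by,\cdot)$ by disintegration, which the paper phrases as a Bayes-formula ratio. Your joint-density quotient with the Tonelli argument is in fact a more careful rendering of the paper's informal pointwise conditional probabilities, and your ``for $\mu_{G^n}$-a.e.\ $\by$'' qualifier is the honest form of the claimed equivalence, which is all that Definition \ref{decomposkerneldef} and the subsequent applications require.
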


\begin{proof}
Given the scaling function $\phi$ there exists coefficients
$\{p_j: j=1,\ldots ,u\}$, where $u$ is the order of the Daubechies wavelet, such that
\DEQSZ\label{map1}
\phi(x)=\sum_{j=1} ^u p_j\phi(2x+j),\quad x\in\RR.
\EEQSZ
In addition,  we have the following representation
\DEQSZ\label{map2}
\phi(x)=\sum_{j=1} ^u (-1)^j p_j \psi(2x+j),\quad x\in\RR.
\EEQSZ
Because of the orthogonality of the wavelet basis we additionally have that
\DEQSZ\label{map3}
\sum_{j=1} ^k p_j \bar p_{j+2l} =\bcase
\frac 1 {\sqrt{2}}&\mbox{ for } l=j,\\
0& \mbox{ for } l\not =j.
\ecase
\EEQSZ
 Let us now consider the mapping
$$
\mathcal{I}:V_{n+1}\ni f\mapsto (f_{n+1,1},\ldots,f_{n+1,2^{n+1}})\in \RR^{|J^\phi_{n+1}|}, 
$$
where $f_{n+1,k}=\phi_{n+1,k}(f)$. It is not difficult to show that $\mathcal{I}$ is an isomorphism from $V_{n+1}$ onto $\RR^{|J^\phi_{n+1}|}$.
We note that since $V_{n+1}=V_n\otimes W_n$, it follows from \eqref{map1} that
there exists a linear mapping $T:V_{n+1}\to V_n$ which induces a mapping
$$
\mathcal{T}: \RR^{|J^\phi_{n+1}|}\to \RR^{|J^\phi_{n}|}.
$$
We can also define a mapping $\CS:V_{n+1}\to W_n$ by  $\CS {\bx} := \pi_{W_n} (I-\CT){\bf x}$.
As above we can also find a  linear mapping $S:V_{n+1}\to W_n$ inducing a mapping
$$
\mathcal{S}: \RR^{|J^\phi_{n+1}|}\to \RR^{|J^\psi_{n}|}.
$$
Since $V_{n+1}=V_n\otimes W_n$, we have $\CI ^{-1}\mbox{ker}(\mathcal{S})=V_n$ and $\CI ^{-1}\mbox{ker}(\mathcal{T})=W_n$.
Hence, from the Bayes formula we infer that
$$\PP\lk( \zeta={\bf x}\mid \gamma={\bf y}\rk)=\frac { \PP\lk( \pi_{W_n}\CI ^{-1}\mathcal{S}^{-1}{\bx} + \pi_{V_n}\CI ^{-1}\mathcal{S}^{-1}{\by}\rk)}{\PP\lk(  \CI ^{-1}\mathcal{S}^{-1}{\bf y}\rk)},
$$
  for any ${\bx}\in \RR^{|J^\psi_{n}|}$ and $ {\by}\in\RR^{|J^\phi_{n}|}$.
By Lemma \ref{exdensity} $\PP\lk(  \CI ^{-1}\mathcal{S}^{-1}{\by}\rk)>0$ and $\PP\lk( \pi_{W_n}\CI ^{-1}\mathcal{S}^{-1}{\bx} + \pi_{V_n}\CI ^{-1}\mathcal{S}^{-1}{\by}\rk)>0$. In particular,  
 there exists a density
$$
h_n({\bx},{\by})=\PP\lk( \zeta={\bx}\mid \gamma={\by}\rk),
$$
such  that
$$
l_n({\by},U) = \int_U h_n({\bx},{\by})\, d{\bx},
$$
and $h_n({\bx},{\by})>0$ for all  ${\bx}\in \RR^{|J^\psi_{n}|}$ and $ {\by}\in\RR^{|J^\phi_{n}|}$.
\end{proof}

In order to verify Assumption \ref{ass2} for a point $Y$
 we will show  in the following Lemma that for all $n\in\NN$, $\pi_{G_n} 0$ belongs to the support of the measure $\mu$.
 If this holds, we can set $Y=0$.

\begin{lemma} \label{lemma1}
Let $\alpha\in (0,2)$, $1\le p<\alpha$ and $s<\frac 1p-1$. Let $\nu$ be a $\sigma$--finite symmetric  measure on $\RR\setminus \{0\}$
such that there exists a number $\alpha\in (0,2]$ such that
$$ \nu(\RR\setminus[-\ep,\ep])\sim \ep^ {-\alpha}l(\ep) \quad \mbox{as} \quad \ep\to 0,
$$
for some slow varying function $l$.
Let  $\eta$ be the to $\nu$ corresponding Poisson random measure over the probability space $(\Omega,\CF,\PP)$.
Let $\mu$ be the from $\xi$ defined in \eqref{xidef} on $B_{p,p}^ s(\RR)$ induced probability measure.
Then for any $\ep>0$,
$$
\mu\lk( \{x\in B^ s_{p,p}(0,1): |x|_{B^ s_{p,p} }^p \le \ep\}\rk) >0.
$$
\end{lemma}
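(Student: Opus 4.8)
The plan is to split the Poisson random measure into its large- and small-jump parts and to exploit their independence. Fix $\ep>0$. For $\delta\in(0,1]$ write $\eta=\eta^\delta_{>}+\eta^\delta_{<}$, where $\eta^\delta_{>}$ is the restriction of $\eta$ to jump sizes $|z|\ge\delta$ and $\eta^\delta_{<}$ the restriction to $|z|<\delta$; these are independent Poisson random measures, with intensities equal to $\nu$ restricted to $\{|z|\ge\delta\}$ and to $\{|z|<\delta\}$, respectively. Accordingly the wavelet series \eqref{xidef} splits as $\xi=\xi^\delta_{>}+\xi^\delta_{<}$, a sum of two independent $B^s_{p,p}(\INT)$-valued random variables, and it suffices to make each of them small on an event of positive probability.

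First I would handle the large jumps. Since $\rho_\delta:=\nu(\RR\setminus[-\delta,\delta])<\infty$, the number of jumps of $\eta^\delta_{>}$ in $[0,1]$ is Poisson distributed with parameter $\rho_\delta$, so
\[
\PP\big(\xi^\delta_{>}=0\big)=\PP(\text{no jump of size }\ge\delta\text{ on }[0,1])=e^{-\rho_\delta}>0,
\]
and on this event $\xi=\xi^\delta_{<}$. Next I would estimate the small-jump part exactly as in Proposition \ref{finitenesslevy}, the only change being that the constant $\int_{\RR}|z|^p\,\nu(dz)$ is replaced by $\int_{|z|<\delta}|z|^p\,\nu(dz)$; since the radial sum over $j$ converges precisely for $s<\tfrac1p-1$, this yields a constant $C>0$, independent of $\delta$, with
\[
\EE\big[\,|\xi^\delta_{<}|^p_{B^s_{p,p}}\,\big]\le C\int_{|z|<\delta}|z|^p\,\nu(dz),
\]
and by \eqref{p-integrable} together with dominated convergence the right-hand side tends to $0$ as $\delta\downarrow0$. (Symmetry of $\nu$ guarantees that the centred small-jump integral is well defined even when $\int_{|z|<\delta}|z|\,\nu(dz)=\infty$, so no drift correction enters this bound.)

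Finally I would combine the two estimates. By Chebyshev's inequality, $\PP\big(|\xi^\delta_{<}|^p_{B^s_{p,p}}>\ep\big)\le\ep^{-1}\EE[\,|\xi^\delta_{<}|^p_{B^s_{p,p}}\,]\to0$, so one may fix $\delta$ so small that $\PP\big(|\xi^\delta_{<}|^p_{B^s_{p,p}}\le\ep\big)>0$. Since $\{\xi^\delta_{>}=0\}\cap\{|\xi^\delta_{<}|^p_{B^s_{p,p}}\le\ep\}\subset\{|\xi|^p_{B^s_{p,p}}\le\ep\}$ and the two events on the left are independent,
\[
\mu\big(\{x:|x|^p_{B^s_{p,p}}\le\ep\}\big)=\PP\big(|\xi|^p_{B^s_{p,p}}\le\ep\big)\ge e^{-\rho_\delta}\,\PP\big(|\xi^\delta_{<}|^p_{B^s_{p,p}}\le\ep\big)>0,
\]
which is the claim; since this holds for every $\ep>0$, it is exactly the support statement that lets one take $Y=0$ in Assumption \ref{ass2}. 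I expect the only delicate point to be the small-jump moment bound: it hinges on $\int_{|z|<\delta}|z|^p\,\nu(dz)\to0$, i.e.\ on the $p$-integrability of $\nu$ near the origin, so the interplay between $p$ and the tail index $\alpha$ must be respected there; everything else is bookkeeping with the independence of the two jump parts.
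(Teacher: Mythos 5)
Your proof is correct, but it takes a genuinely different route from the paper's. The paper works with the event $\Omega_{\tilde\ep}=\lk\{\sup_{0\le t\le 1}|L(t)|\le\tilde\ep\rk\}$: it invokes the small-deviation asymptotics of Aurzada and Dereich (this is precisely where the regular-variation hypothesis $\nu(\RR\setminus[-\ep,\ep])\sim\ep^{-\alpha}l(\ep)$ enters) to get $\PP(\Omega_{\tilde\ep})\ge\delta>0$, observes that on this event all jumps have size at most $2\tilde\ep$, bounds the conditional moment $\EE\lk[|\xi|^p_{B^s_{p,p}}\mid\Omega_{\tilde\ep}\rk]\lesssim(2\tilde\ep)^{p-\alpha}$, and concludes by a conditional Chebyshev argument. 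You instead split the Poisson random measure at a fixed threshold $\delta$ into independent big-jump and small-jump parts, annihilate the big jumps outright on an event of probability $e^{-\rho_\delta}>0$, and apply Chebyshev to the \emph{unconditional} moment of the small-jump part, which tends to zero with $\delta$ by \eqref{p-integrable} alone (your remark that symmetry of $\nu$ makes the small-jump integral well defined without drift correction is the right thing to check). Your route buys three things: it is self-contained, needing no external small-deviation theorem; it avoids the delicate conditioning step, which in the paper is written without the normalizing factor $1/\PP(\Omega_{\tilde\ep})$ and silently replaces conditioning on a small supremum by truncation of the jump sizes; and it does not use the regular-variation hypothesis at all, only \eqref{p-integrable} and $\nu(\{|z|\ge\delta\})<\infty$ — which is welcome, since the paper's own final bound $(2\tilde\ep)^{p-\alpha}/(p-\alpha)$ tends to zero as $\tilde\ep\to0$ only when $p>\alpha$, in tension with the stated hypothesis $1\le p<\alpha$ (and, for $\alpha$-stable-like $\nu$, with \eqref{p-integrable} itself). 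What the paper's approach buys in exchange is quantitative information: the Aurzada--Dereich route yields the small-ball rate $-\log\PP(|\xi|_{B^s_{p,p}}\le\ep)\sim K\ep^{-\alpha}$ rather than mere positivity, although only positivity is needed to take $Y=0$ in Assumption \ref{ass2}.
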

\begin{proof}
Let $L$ be given by
\DEQSZ\label{deflevy}
L(t):= \int_0 ^t \int_\RR z\, \eta(dz,ds) ,\quad t\in[0,1].
\EEQSZ

From \cite{aurzada}, Example 2.2 
we know that
$$
-\log\PP \lk( \sup_{0\le t\le 1}|L(t)|\le \ep\rk) \sim K \ep^ {\-\alpha},
$$
hence, for any $\tilde \ep>0$ there exists a $\delta>0$ such that
$$
\PP\lk( \sup_{0\le t\le 1}|L(t)|\le \tilde \ep\rk) \ge \delta .
$$

Let  $\tilde \ep>0$ be a constant to be chosen later and let us set
$$\Omega_{\tilde \ep}:= \lk\{  \sup_{0\le t\le 1}|L(t)|\le \tilde \ep\rk\}.
$$
Then,
\DEQS
\PP\lk( |\xi |_{B^s_{p,p}}^p \le \ep\rk)& =&
\PP\lk( |\xi |_{B^s_{p,p}}^p \le \ep\mid \Omega_{\tilde \ep}\rk)\PP\lk( \Omega_{\tilde \ep}\rk)
+
\PP\lk( |\xi |_{B^s_{p,p}}^p \le \ep\mid \Omega\setminus \Omega_{\tilde \ep}\rk)\PP\lk(  \omega\setminus\Omega_{\tilde \ep}\rk)
\\
&\ge &
\PP\lk( |\xi |_{B^s_{p,p}}^p \le \ep\mid \Omega_{\tilde \ep}\rk)\PP\lk( \Omega_{\tilde \ep}\rk)
\ge \delta \PP\lk( |\xi |_{B^s_{p,p}}^p \le \ep\mid \Omega_{\tilde \ep}\rk).
\EEQS
Note that on $\Omega_{\tilde \ep}$ the jump size of the process is less than $2\tilde \ep$. Hence
\DEQSZ\label{estimate1}\nonumber
\EE \lk[ |\zeta_{j,k}|^p \mid \Omega_{\tilde \ep}\rk] &\le &\EE \int_0^1\int_\RR \psi_{j,k}(s) 1_{|z|\le 2\tilde \ep}\eta(dz,ds)
\\ &\le& \frac {(2\tilde \ep)^{p-\alpha}} {p-\alpha}\, \int_0^1| \psi_{j,k}(s)|^p \, ds
\le  C_p \frac {(2\tilde \ep)^{p-\alpha}} {p-\alpha}\,2^{(\tfrac p2-1)j},
\EEQSZ
and
\DEQS
\EE \lk[ |\xi |^p_{B^s_{p,p}} \mid \Omega_{\tilde \ep}\rk] &\le &
\EE\lk[\sum_{j=0}^\infty 2   ^{j(s-\tfrac 1p)p}\sum_{k\in J^ \psi _j} |\zeta_{j,k}|^p  2^{ \tfrac {jp} 2}\mid \Omega_{\tilde \ep}\rk]
\\
&\le & C_p \frac {(2\tilde \ep)^{p-\alpha}} {p-\alpha}\,
\sum_{j=0}^\infty   2   ^{j(s-\tfrac 1p)p} \sum_{k\in J^ \psi _j} 2^{(\tfrac p2-1)j} 2^{ \tfrac {jp} 2}
\le \tilde C_p \frac {(2\tilde \ep)^{p-\alpha}} {p-\alpha}\,.
\EEQS
From these calculations we infer that
\DEQS
\PP\lk( |\xi |_{B^s_{p,p}}^p \le \ep\mid \Omega_{\tilde \ep}\rk)&=&1-\PP\lk( |\xi |_{B^s_{p,p}}^p >\ep\mid \Omega_{\tilde \ep}\rk)
\\ \ge 1-\frac {\EE  |\xi |^p _{B^s_{p,p}}}{\ep}
\ge 1-  \tilde{C_p} \frac {(2\tilde \ep)^{p-\alpha}} {\ep(p-\alpha)}.
\EEQS
Now, choosing $\tilde{\ep}$ such that
$$
\tilde C_p \frac {(2\tilde \ep)^{p-\alpha}} {\ep(p-\alpha)}=\frac 12,
$$
we infer that $$ \PP\lk( |\xi |_{B^s_{p,p}}^p \le \ep\mid \Omega_{\tilde \ep}\rk)\ge \frac12 ,$$
from
which the assertion follows.

\end{proof}

\noindent
For any $\DB\in \CB(B_{p,p}^s(\INT)$ we define the conditional probability $\mu(\,\cdot\,\mid \DB)$ by
$$
\CB(B_{p,p}^s (\INT))\ni U \mapsto \mu(U\mid \DB) := \bcase
                  \frac {\mu(U\cap \DB)}{\mu(\DB)}&\mbox{ if }  \mu(\DB)>0,
\\ 1&\mbox{ if }  \mu(\DB)=0.
\ecase
$$

\begin{lemma} \label{lemma2}
Let $\alpha\in (0,2)$, $1\le p<\alpha$ and $s<\frac 1p-1$. Let $\nu$ be a $\sigma$--finite symmetric  measure on $\RR\setminus \{0\}$
such that there exists a number $\alpha\in (0,2]$ such that
$$ \nu(\RR\setminus[-\ep,\ep])\sim \ep^ {-\alpha}l(\ep) \quad \mbox{as} \quad \ep\to 0,
$$
for some slow varying function $l$.

Let $\eta$ also be the  Poisson random measure, over the probability space $(\Omega,\CF,\PP)$, associated to the L\'evy measure $\nu$.
Let $\mu$ be the probability measure  on $B_{p,p}^ s(\INT)$  induced by the mapping $\xi$ defined in \eqref{xidef}.
Then, for any $R>0$, $x\in B_E(R,0)$  and 
$\ep>0$ there exist $n\in\NN$ and some $\delta>0$ such that 
$$
\mu \lk( \{ x\in B_{p,p}^ s (\INT): |\pi_{G^ n} x|_{B^ s_{p,p} }^p \le \ep\} \rk) >\delta.
$$
\end{lemma}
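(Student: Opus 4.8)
The plan is to reduce the statement to the moment bound already obtained in Proposition~\ref{finitenesslevy}, combined with Markov's inequality, and to observe that the resulting probability is in fact \emph{uniform}, so that neither $R$ nor the point $x_0\in B_E(R,0)$ enters the argument. The key structural remark is that, in the wavelet characterization of $B^s_{p,p}(\INT)$, the projection $\pi_{G^n}$ onto the tail space $G^n=G_n^\perp$ retains exactly the wavelet levels $j>n$: since $G_n=V_0\oplus W_1\oplus\cdots\oplus W_n$, one has, for the random function $\xi$ defined in \eqref{xidef},
\[
\pi_{G^n}\xi=\sum_{j>n}\sum_{k\in J^\psi_j}\zeta_{j,k}\,\psi_{j,k},
\]
so that $|\pi_{G^n}\xi|_{B^s_{p,p}}^p$ is comparable to the tail $\sum_{j>n}2^{j(s-1/p)p}2^{jp/2}\sum_{k\in J^\psi_j}|\zeta_{j,k}|^p$ of the very series whose full sum was shown to be integrable.

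First I would record, exactly as in the proof of Proposition~\ref{finitenesslevy}, that
\[
\EE\,|\xi|_{B^s_{p,p}}^p\ \sim\ \sum_{j=0}^\infty a_j,\qquad a_j:=2^{j(s-1/p)p}2^{jp/2}\,\EE\sum_{k\in J^\psi_j}|\zeta_{j,k}|^p,
\]
where each $a_j\ge0$ and, for $s<\tfrac{1}{p}-1$, the series $\sum_j a_j$ converges. Consequently $\EE\,|\pi_{G^n}\xi|_{B^s_{p,p}}^p\sim\sum_{j>n}a_j$ is the tail of a convergent series of nonnegative terms, hence it tends to $0$ as $n\to\infty$. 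This is the only place where the hypotheses on $\nu$ and the admissible range of $s,p$ are needed, and all of them are already packaged in Proposition~\ref{finitenesslevy}.

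Next I would apply Markov's inequality: for every $n\in\NN$,
\[
\PP\big(|\pi_{G^n}\xi|_{B^s_{p,p}}^p>\ep\big)\le \frac{\EE\,|\pi_{G^n}\xi|_{B^s_{p,p}}^p}{\ep}.
\]
Given $\ep>0$, I choose $n$ so large that the right-hand side is at most $\tfrac12$, which is possible by the vanishing of the tail established above, the choice depending on $\ep$ alone. Passing to the complementary event and translating from $\PP$ to the induced law $\mu$ yields
\[
\mu\big(\{x\in B^s_{p,p}(\INT):|\pi_{G^n}x|_{B^s_{p,p}}^p\le\ep\}\big)=\PP\big(|\pi_{G^n}\xi|_{B^s_{p,p}}^p\le\ep\big)\ge\tfrac12=:\delta,
\]
which is exactly the assertion, with a bound manifestly independent of $R$ and of the chosen $x_0\in B_E(R,0)$.

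The main obstacle is bookkeeping rather than analysis. One must ensure that the norm really splits along wavelet levels, i.e.\ that $|\pi_{G^n}f|_{B^s_{p,p}}^p\sim\sum_{j>n}(\cdots)$ holds with constants \emph{independent of} $n$; this is precisely the unconditional-basis property of the Daubechies system in $B^s_{p,p}(\INT)$ quoted after \eqref{exactspaceslevy}. One must also justify exchanging expectation and summation in identifying $\EE\,|\pi_{G^n}\xi|_{B^s_{p,p}}^p$ with $\sum_{j>n}a_j$, which is legitimate by Tonelli's theorem since all summands are nonnegative. Unlike Lemma~\ref{lemma1}, no conditioning on the small-jumps event is required here, because the tail moment itself decays; should one nevertheless wish to mirror that argument, conditioning on $\{\sup_{t}|L(t)|\le\tilde\ep\}$ together with the level estimate \eqref{estimate1} summed over $j>n$ would give the same conclusion.
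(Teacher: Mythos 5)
There is a genuine gap at the very first step of your argument: you assert that $\EE\,|\xi|_{B^s_{p,p}}^p\sim\sum_j a_j<\infty$ is ``already packaged in Proposition \ref{finitenesslevy}'', and on that basis you declare the conditioning on a small-jump event unnecessary. Under the hypotheses of Lemma \ref{lemma2} this unconditional moment can be infinite. Assumption \eqref{p-integrable} controls only $\int_{|z|\le 1}|z|^p\,\nu(dz)$; nothing at all is assumed about $\int_{|z|>1}|z|^p\,\nu(dz)$, and the estimate $\EE|\zeta_{j,k}|^p\le C_\nu\int_0^1|\psi_{j,k}(s)|^p\,ds$ invoked in the proof of Proposition \ref{finitenesslevy} needs a finite $p$-th moment of $\nu$ on the range of jumps actually integrated (this is precisely why, when constructing $\mu$, the paper splits the noise into $\eta_1$ (jumps of size $\le 1$) and $\eta_2$ (jumps of size $>1$) and treats the latter as a finite sum of Dirac masses rather than through moment bounds). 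Concretely, take $\nu$ with density comparable to $|z|^{-1-\alpha}$ near $0$ but to $|z|^{-1-\beta}$ at infinity with $\beta<p$: every hypothesis of Lemma \ref{lemma2} holds, yet already $\zeta_{0,0}=\int_0^1\int_\RR\psi(s)\,z\,\eta(dz,ds)$ is infinitely divisible with a L\'evy measure of tail index $\beta$, so $\EE|\zeta_{0,0}|^p=\infty$, your series $\sum_j a_j$ is infinite, and the Markov bound is vacuous for every $n$.

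This is exactly why the paper's proof does not proceed unconditionally: it first restricts to the event $\DB_\Omega=\lk\{\sup_{0\le t\le 1}|L(t)|\le \tilde\ep\rk\}$, which has probability at least some $\delta_0>0$ by the small-deviation estimate underlying Lemma \ref{lemma1} (Aurzada--Dereich). On this event all jumps are bounded by $2\tilde\ep$, so the truncated moment estimate \eqref{estimate1}, summed over levels $j>n$, gives the finite and geometrically decaying bound $\EE\lk[\,|\pi_{G^n}\xi|^p_{B^s_{p,p}}\,1_{\Omega_{\tilde\ep}}\rk]\le \hat C_p\,2^{n(sp+p-1)}$; the conditional Markov inequality and a choice of $n$ large then yield the assertion, with $\delta$ proportional to $\PP(\DB_\Omega)$ rather than your uniform $\tfrac12$. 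Your closing remark that one ``could mirror'' this conditioning is in fact the only correct route under the stated hypotheses; the unconditional shortcut would be valid only under the stronger assumption $\int_\RR|z|^p\,\nu(dz)<\infty$. The remaining bookkeeping in your proposal (the level-wise splitting $|\pi_{G^n}f|^p_{B^s_{p,p}}\sim\sum_{j>n}(\cdots)$ with constants independent of $n$, the use of Tonelli, and the observation that the conclusion does not actually involve $R$ or $x_0$) is sound and consistent with the paper.
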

\begin{proof}
From Lemma \ref{lemma1} we infer that there exists a constant $\delta>0$ such that for
 $$\PP\lk( \DB_\Omega \rk) \ge \delta,$$
 where $\DB_\Omega := \lk\{ \sup_{0\le t\le 1} |L(t)|\le 1\rk\} $ and  $L=\{L(t):t\in[0,1]\}$ is
 defined in \eqref{deflevy},
Observe that the set $\DB:= \xi( {\DB}_\Omega)$  satisfies
 $\mu(\DB)\ge \delta$. Thus,
\DEQS
\lqq{ \mu\lk( \{ x\in B_{p,p}^ s (\INT): |\pi_{G^ n}x |_{B^s_{p,p}}^p \le \ep\}\rk)} &&
\\
& \ge &
\mu\lk( \{ x\in B_{p,p}^ s : |\pi_{G^ n}\xi |_{B^s_{p,p}}^p \le \ep\} \mid \DB\rk)\mu \lk( \DB 
\rk)
\ge \delta\cdot \mu\lk(  \{x\in B_{p,p}^ s :  |\pi_{G^ n}\xi |_{B^s_{p,p}}^p \le \ep\} \mid \DB\rk).
\EEQS
Now, from \eqref{estimate1} we infer that
\DEQS
\EE \lk[ |\pi_{G^ n}\xi |^p_{B^s_{p,p}} 1_{\Omega_{\tilde \ep}}\rk] &\le &
\EE\lk[1_{\Omega_{\tilde \ep}} \sum_{j=n+1}^\infty 2   ^{j(s-\tfrac 1p)p}\sum_{k\in J^ \psi _j} |\zeta_{j,k}|^p  2^{ \tfrac {jp} 2}\rk]
\\
&\le & C_p \frac {(2\tilde \ep)^{p-\alpha}} {p-\alpha}\,
\sum_{j=n+1}^\infty   2   ^{j(sp+p-1)}
\le \tilde C_p  2   ^{n(sp+p-1)}\sum_{j=0}^\infty   2   ^{j(sp+p-1)}
\le \hat  C_p  2   ^{n(sp+p-1)}.
\EEQS
Therefore,
\DEQS
\lqq{ \PP\lk( |\pi_{G^ n}\xi |_{B^s_{p,p}}^p \le \ep\mid \Omega_{\tilde \ep}\rk)=1-\PP\lk( |\pi_{G^ n}\xi |_{B^s_{p,p}}^p >\ep\mid \Omega_{\tilde \ep}\rk)}
&&
\\ &\ge& 1-\frac {\EE  |\pi_{G^ n}\xi |^p _{B^s_{p,p}}}{\ep}
\ge 1- \hat  C_p  2   ^{n(sp+p-1)}/\ep.
\EEQS
For  any $\kappa<1$ there exists a number $n\in\NN$ sufficiently large, such that
$$
 \hat  C_p  2   ^{n(sp+p-1)}/\ep\le 1-\kappa.
$$
which gives the assertion.

\end{proof}

\begin{lemma}\label{lemmaa6}
Let $\alpha\in (0,2)$, $1\le p<\alpha$ and $s<\frac 1p-1$. Let $\nu$ be a $\sigma$--finite symmetric  measure on $\RR\setminus \{0\}$
such that there exists a number $\alpha\in (0,2]$ such that
$$ \nu_j(\RR\setminus[-\ep,\ep])\sim \ep^ {-\alpha}l(\ep) \quad \mbox{as} \quad \ep\to 0,
$$
for some slow varying function $l$.

Then, for all $N\in\NN$,
 $x_0\in G_N$,   and all $\ep>0$, there exists a $\delta>0$ such that
$$
\mu\lk( \lk\{ x\in B^ s_{p,p}(\INT)\mid |x-x_0|_{B^ s_{p,p}}\le \ep\rk\}\rk)\ge \delta.
$$
\end{lemma}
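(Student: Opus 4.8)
The plan is to reduce the claim to the positivity of a single, conveniently split event, and then to evaluate that event through the decomposition kernel. Fix $N\in\NN$, $x_0\in G_N$ and $\ep>0$. Since $x_0\in G_N\subseteq G_n$ for every $n\ge N$, I would write $x=\pi_{G_n}x+\pi_{G^n}x$ (the Schauder projections being bounded) and use the plain triangle inequality
$$
|x-x_0|_{B^s_{p,p}}=\big|(\pi_{G_n}x-x_0)+\pi_{G^n}x\big|_{B^s_{p,p}}\le |\pi_{G_n}x-x_0|_{B^s_{p,p}}+|\pi_{G^n}x|_{B^s_{p,p}},
$$
so that, setting
$$
A:=\Big\{x:\ |\pi_{G_n}x-x_0|_{B^s_{p,p}}\le \tfrac\ep2,\ |\pi_{G^n}x|_{B^s_{p,p}}\le \tfrac\ep2\Big\},
$$
one has $A\subseteq\{x:\ |x-x_0|_{B^s_{p,p}}\le \ep\}$. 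It then suffices to prove $\mu(A)>0$, after which one simply takes $\delta:=\mu(A)$.

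First I would fix the level $n$. By Lemma \ref{lemma2}, applied with threshold $(\ep/2)^p$ (and noting that its proof works for all sufficiently large $n$, since $sp+p-1<0$ whenever $s<\tfrac1p-1$), there is an $n\ge N$ for which
$$
\mu_{G^n}\big(\{\by\in G^n:\ |\by|_{B^s_{p,p}}\le \tfrac\ep2\}\big)=\mu\big(\{x:\ |\pi_{G^n}x|_{B^s_{p,p}}\le \tfrac\ep2\}\big)>0.
$$
I fix such an $n$ and set $B:=\{a\in G_n:\ |a-x_0|_{B^s_{p,p}}\le \tfrac\ep2\}$, which is a genuine ball in the finite-dimensional space $G_n$ and hence has strictly positive Lebesgue measure.

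Next I would compute $\mu(A)$ using the decomposability of $\mu$ with decomposition $\{F_n,G^n,l_n\}$. Writing $x=a+\by$ with $a\in G_n$ and $\by\in G^n$ gives $\pi_{G_n}x=a$ and $\pi_{G^n}x=\by$, so the slice $A_n(\by)=\{a\in G_n:\ a+\by\in A\}$ equals $B$ when $|\by|_{B^s_{p,p}}\le\tfrac\ep2$ and is empty otherwise. Definition \ref{decomposkerneldef} then yields
$$
\mu(A)=\int_{G^n} l_n(\by,A_n(\by))\,\mu_{G^n}(d\by)=\int_{\{|\by|_{B^s_{p,p}}\le \ep/2\}} l_n(\by,B)\,\mu_{G^n}(d\by).
$$
By Lemma \ref{lemmaa3} the kernel admits the representation $l_n(\by,B)=\int_B h_n(\bx,\by)\,d\bx$ with $h_n(\bx,\by)>0$ everywhere; since $B$ has positive Lebesgue measure, $l_n(\by,B)>0$ for every $\by$. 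Thus the integrand is strictly positive on the domain of integration, which by the previous step carries positive $\mu_{G^n}$-mass, so $\mu(A)>0$ and the proof is complete.

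The only point requiring genuine care is the compatibility of the metric splitting with the algebraic decomposition $G_n\oplus G^n$: one must verify that the event $A$ factorizes exactly through the slices $A_n(\by)$, which is precisely where the boundedness of the projections $\pi_{G_n},\pi_{G^n}$ and the triangle inequality enter. Everything else is a direct assembly of Lemma \ref{lemma2} (positivity of the event that the high-frequency part is small) and Lemma \ref{lemmaa3} (strict positivity of the conditional density), so no new estimate is needed beyond those already established.
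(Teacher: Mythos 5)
Your proof is correct, but it reaches the conclusion by a genuinely more elementary route than the paper. Both arguments share the same skeleton: split off the tail via the triangle inequality, use Lemma \ref{lemma2} to give the event $\lk\{ |\pi_{G^n}x|_{B^s_{p,p}}\le \ep/2\rk\}$ positive mass (your remark that the proof of Lemma \ref{lemma2} works for all sufficiently large $n$, so that one may take $n\ge N$ and have $x_0\in G_n$, is exactly the point that makes the slice $A_n(\by)$ equal to the finite-dimensional ball $B$), and use the equivalence of $l_n(\by,\cdot)$ with $\leb_{G_n}$ from Lemma \ref{lemmaa3} to get $l_n(\by,B)>0$. Where you diverge is the final step: the paper wants a \emph{uniform} lower bound $l_{n_0}(\by,A_{n_0})\ge \delta_3$ on a set of positive $\mu^{n_0}$-mass, and to get it introduces an auxiliary smoothness $s_0\in(s,\tfrac1p-1)$, applies Lemma \ref{lemma2} in the stronger $B^{s_0}_{p,p}$-norm so that the tail event becomes compact in $B^s_{p,p}$, and invokes Lusin's theorem (\cite[Theorem 4.1]{parth}) to make $\by\mapsto l_{n_0}(\by,A_{n_0})$ continuous on a closed set $C_\gamma$ of near-full measure; continuity plus compactness then yield $\delta_3$ and the explicit bound $\delta=\delta_3\delta_2/2$. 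You bypass all of this by observing that an a.e.\ strictly positive integrand over a set of positive $\mu_{G^n}$-measure integrates to a strictly positive number, which suffices because the lemma only asserts the existence of \emph{some} $\delta>0$ depending on $N$, $x_0$ and $\ep$. What the paper's heavier machinery buys is a quantitative $\delta$ and a bound that is in principle locally uniform in $x_0$; what your argument buys is brevity and the elimination of the compact-embedding and Lusin steps. One small precision: $h_n(\bx,\by)>0$ from Lemma \ref{lemmaa3} should be read as holding for $\mu_{G^n}$-a.e.\ $\by$ (conditional densities are only defined up to null sets, and the paper itself phrases the positivity this way in its proof), but a.e.\ positivity on the domain of integration is all your final step needs.
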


\begin{proof}

Let $\ep>0$ be a fixed constant and  $s<s_0<\frac 1p-1$.
From Lemma \ref{lemma2} we deduce that there exist $n_0\in\NN$ and $\delta_2>0$ such that
$$
\mu\lk( \lk\{ x\in B^ {s}_{p,p}(\INT)\mid |\pi_{G^ {n_0}}x|_{ B^ {s_0}_{p,p}}\le \frac \ep 4\rk\}\rk)\ge \delta_2.
$$
Then
\DEQS
\lqq{\mu\lk( \lk\{ x\in B^ {s}_{p,p}(\INT)\mid |x_0-x|_{ B^ {s}_{p,p}}\le  \ep \rk\}\rk)
}
\\
&\ge&
  \mu\lk( \lk\{ x\in B^ {s}_{p,p}(\INT) 
 \mid |x_0-\pi_{G_{n_0}} x|_{ B^ {s}_{p,p}}\le \frac \ep 4 \rk\}\cap \lk\{ x\in B^ {s}_{p,p}(\INT) 
  \mid |\pi_{G^ {n_0}} x|_{ B^ {s_0}_{p,p}}\le \frac \ep 4 \rk\}\rk).
\EEQS
We now set  $A_{n_0}=\lk\{ x\in  B^ {s}_{p,p}(\INT)\mid |x_0-\pi_{G_{n_0}} x|_{ B^ {s}_{p,p}}\le \frac \ep 4 \rk\}$ and observe that for  $\gamma=\delta_2/2>0$ there exists a closed set $C_\gamma\subset G^ {n_0}$ such that  $\mu^ {n_0}(G^ {n_0}\setminus C_\gamma)\le \gamma$ and
the function
$$
C_\gamma \ni \by \mapsto l_{n_0}(\by,A_{n_0})\in[0,1]
$$
is continuous. Furthermore,  since for all $\by\in G^ {n_0} $ $\mu$ a.s.\ $l_n(\by,\cdot)$ is equivalent to the Lebesgue measure and $\leb_{G_{n_0}}({A_{n_0}})>0$, we have
 $l_{n_0}(\by,A_{n_0})>0$.
Since the embedding $ B^ {s_0}_{p,p}(\INT)\hookrightarrow  B^ {s}_{p,p}(\INT)$ is compact,
$$
C_{n_0}=\lk\{ x\in B^ {s}_{p,p}(\INT) 
  \mid |\pi_{G^ {n_0}} x|_{ B^ {s_0}_{p,p}}\le \frac \ep 4 \rk\}\cap C_\gamma
$$
 is a compact subset of $G^ {n_0}$ and there exists a $\delta_3>0$ such that
for all
$ \by\in  C_{n_0}\cap C_\gamma $, 
 $l_{n_0}(\by,A_{n_0})\ge \delta_3$.
From the above consideration we now infer that
\DEQS
\lqq{ \mu\lk(\lk\{ x\in  B^ {s}_{p,p}(\INT) \mid |x-x_0|\le \ep\rk\}\rk)
\ge \int_{\{|\pi_{G^ {n_0}} x|_{ B^ {s_0}_{p,p}}\le \tfrac \ep 4\}\cap C_\gamma } l_{n_0}(\by,A_{n_0})\, \mu^ {n_0}(d\by)
}&&
\\
&\ge& \delta_3 \mu^  {n_0}\lk(\lk\{ |\pi_{G^ {n_0}} x|_{ B^ {s_0}_{p,p}}\le \tfrac \ep 4\rk\}\cap C_\gamma\rk)
\\
&\ge &
\delta_3 \lk( 1- \mu^  {n_0}\lk(\lk( G^ {n_0}\setminus \lk\{ |\pi_{G^ {n_0}} x|_{ B^ {s_0}_{p,p}}\le \tfrac \ep 4\rk\}\rk) \cup \lk( G^ {n_0}\setminus C_\gamma\rk)\rk)\rk)
\\
&\ge& \delta_3\lk(1- \lk( 1-\delta_2+\gamma\rk)\rk)= \delta_3\frac {\delta_2}2 .
\EEQS
\end{proof}

The above discussion is summarized in the following lemma.

\begin{lemma}\label{lemma11}
Let $\eta$ be a time homogeneous Poisson random measure on $\RR$ over a probability space $(\Omega,\CF,\PP)$. We assume that the L\'evy measure $\nu$ associated to $\eta$ is symmetric, $\sigma$-additive, absolutely continuous with respect to the  Lebesgue measure on $\RR\setminus \{0\}$.
In addition, we assume, that there exists some $p\in(1,2)$ with
 $$\int_{|z|\le 1} |z|^ p\nu(dz)<\infty $$
and there exists a number $\alpha\in (0,2]$ such that
$$ \nu(\RR\setminus[-\ep,\ep])\sim \ep^ {-\alpha}l(\ep) \quad \mbox{as} \quad \ep\to 0,
$$
for some slow varying function $l$.

Let $\{ \phi_{j,k}:j\in\NN:k=1,\ldots ,2^ j\}$ be the wavelet  basis in $B_{p,p}^ {s}(\INT)$ described in Section \ref{haar}.
Then, the measure $\mu$ induced by the map $\xi$ defined by \eqref{xidef} on  $B_{p,p}^ {s}(\INT)$
is decomposable with decomposition $\{F_n,G^n,l_n\}_{n=0}^\infty$ satisfying Assumption \ref{ass1} and Assumption \ref{ass2}. Here
 the spaces $F_n$ are defined by $F_0=V_0$, $F_n=W_{n}$, $n\ge 2$,   $V_0$ and $W_n$
are defined in  \eqref{exactspaceslevy}, and $l_n$ is defined in \eqref{ldefined}. 
\end{lemma}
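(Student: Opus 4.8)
The plan is to read the conclusion off the lemmas already established in this section, since the statement is essentially a summary; I would verify its three ingredients---decomposability, Assumption \ref{ass1}, and Assumption \ref{ass2}---in turn, each time quoting an earlier result. To set up the decomposition, note that the multiresolution identity $V_{n+1}=V_n\oplus W_n$ iterates to $V_n=V_0\oplus W_1\oplus\cdots\oplus W_n$, which is exactly $G_n=F_0\oplus\cdots\oplus F_n$ under $F_0=V_0$, $F_n=W_n$. Writing ${\bz}^n$ for the vector of wavelet coefficients $\{\zeta_{n,k}:k\in J^\psi_n\}$ that span $F_n$ and ${\bg}^n$ for the scaling coefficients $\{\gamma_{n,k}:k\in J^\phi_n\}$ that encode $\pi_{G_n}\xi$, Lemma \ref{lemmaa3} provides the conditional kernel $l_n({\by},U)=\PP({\bz}^n\in U\mid{\bg}^n={\by})$ of \eqref{ldefined}. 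Disintegrating $\mu$ along $E=G_n\oplus G^n$ (equivalently, invoking the tower property for the law of the $G_n$-component given the $G^n$-component) then yields $\mu(A)=\int_{G^n}\int_{A_n({\by})}l_n({\by},dx)\,\mu_{G^n}(d{\by})$, which is precisely Definition \ref{decomposkerneldef}; hence $\mu$ is decomposable with decomposition $\{F_n,G^n,l_n\}$.

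Next I would check Assumption \ref{ass1}, namely $\mu_{G_n}\ll\leb_{G_n}$. Under the coordinate isomorphism $\CI:V_n\to\RR^{|J^\phi_n|}$ the measure $\mu_{G_n}$ is carried onto the law of the scaling-coefficient vector ${\bg}^n$, and Lemma \ref{exdensity} states that this law is equivalent to the Lebesgue measure on $\RR^{|J^\phi_n|}$. In particular it is absolutely continuous, so Assumption \ref{ass1} holds at every level.

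Finally I would verify Assumption \ref{ass2} with the choice $Y=0$, so that $\pi_{G_n}Y=0$ for all $n$. For item (1), the equivalence in Lemma \ref{exdensity} endows $\mu_{G_n}$ with a strictly positive density, so every ball $B_{G_n}(0,\delta)$ carries positive mass (this is also the content of the small-ball bound of Lemma \ref{lemma1}). For item (2), I would take $R_N$ to be any positive number and observe that the required estimate $\mu(\{x:|x-x_0|_{B^s_{p,p}}\le\ep\})\ge\delta$, valid for every $x_0\in B_{G_N}(0,R_N)\subset G_N$, is exactly the conclusion of Lemma \ref{lemmaa6}.

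I do not expect a genuine obstacle here, since every analytic estimate has already been carried out; the one point requiring care is the bookkeeping that identifies $\mu_{G_n}$ with the law of the scaling-coefficient vector through the isomorphism $\CI$, so that the equivalence proved in Lemma \ref{exdensity} transfers to the absolute continuity of $\mu_{G_n}$. Once that identification is fixed, Assumptions \ref{ass1} and \ref{ass2} follow verbatim from Lemmas \ref{exdensity}, \ref{lemma1}, and \ref{lemmaa6}, and the decomposability from Lemma \ref{lemmaa3}.
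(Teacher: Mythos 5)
Your proof is correct and takes essentially the same route as the paper: there, too, decomposability is read off the Schauder-basis property (with the kernel $l_n$ of Lemma \ref{lemmaa3}), Assumption \ref{ass2}-(1) is verified with the choice $Y=0$, and Assumption \ref{ass2}-(2) is quoted directly from Lemma \ref{lemmaa6}. The one variation is Assumption \ref{ass1}: where you obtain $\mu_{G_n}\ll\leb_{G_n}$ directly from the equivalence of the law of the scaling-coefficient vector in Lemma \ref{exdensity} via the coordinate isomorphism, the paper instead argues by induction through the conditional kernels of Lemma \ref{lemmaa3} that $\mu_{G_n}$ charges every open set --- your identification is, if anything, the tighter justification, since positivity on open sets alone would not literally imply absolute continuity.
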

\begin{proof}
The decomposability follows from the fact the wavelet basis  described in section \ref{haar}   is a Schauder basis of $B_{p,p}^ {s}(\INT)$.
Assumption \ref{ass2}-(1) follows from  choosing  $Y=(0,0,\ldots)$ and from the fact that
 $\PP\lk( \pi_{G_{j+1}} x\in \cdot \mid \pi_{G_j}x=y\rk)$ is equivalent 
 to the Lebesgue measure and
that for any $y\in\RR$ we have (see Lemma  \ref{lemmaa3})
 $$\PP\lk( \pi_{G_{j+1}} x\in \cdot \mid \pi_{G_j}x=y\rk)>0.$$
Using an induction argument one can easily show that for any open set in $G_n$ $\mu_{G_n}(\CO)>0$ from which it follows that $\mu_{G_n}$ is absolutely continuous 
 with respect to $\leb_{G_n}$. Finally,  Assumption \ref{ass2}-(2)  follows from Lemma \ref{lemmaa6}.
\end{proof}

\section{The Fractional Brownian Noise and its Wavelet Expansion}
\label{haartrue}

Let $B^ H=\{ B^ H(t):t\ge 0\}$ a the fractional Brownian motion with Hurst parameter $H\in(\frac 12,1)$. Let us fix $s\in (-\frac12, H-1)$ and consider the mapping
%
$$
\xi^H:\CS([0,1]) 
\ni \phi \mapsto \xi^H(\phi)=\int_0^ 1 \phi(t)\,dB^ H(t).
$$
For all $n\in\NN$, $\phi_1,\ldots,\phi_n\in \CS(\RR )$, 
 $C\in\CB(\RR^n)$ we set
\DEQSZ\label{measurefractal}
\mu\lk( \lk\{ x\in \CS'( 
[0,1]): (x(\phi_1),\ldots,x(\phi_n))\in C\rk\}\rk) := \PP\lk(  (\xi(\phi_1),\ldots,\xi(\phi_n))\in C \rk).
\EEQSZ


We will firstly show that this measure $\mu$ is a Radon measure on $B_{2,2}^{s}(\INT)$. For this aim, let us consider the  Haar wavelet $\psi$  defined by
$$
\psi(t) := \bcase 1; & \mbox{ for } t\in[0,\frac 12),\\
-1; & \mbox{ for } t\in[\frac 12,1],
\\
0 & \mbox{ elsewhere},
\ecase
$$
and  the scaling function $\phi$ defined by
$$
\phi(t) := \bcase 1; & \mbox{ for } t\in[0,1],\\
0; & \mbox{ elsewhere}.
 \ecase
$$
Also, we set
\begin{equation}\label{wavelet-pa}
\psi_{j,k}:=2^{-\frac j2}\psi(2 ^jt+k)\mbox{ and }  \phi_{j,k}:=2^{-\frac j2}\phi(2 ^jt+k),  j=1,\ldots,n,\,\, k=1,\ldots, 2 ^j,
\end{equation}
to which we associate the  multiresolution analysis
\DEQSZ\label{exactspacesbr}
V_n:= \spn\{ \phi_{j,k}: j=1,\ldots,n,\,\, k=1,\ldots, 2 ^j\},
\quad
W_n:= \spn\{ \psi_{n,k}:   k=1,\ldots, 2 ^n\}.
\EEQSZ


The Haar wavelet is an unconditional basis in $L^p(\INT )$ with $1<p<\infty$,
a basis in $B_{p,q}^s(\INT )$ for $1<p<\infty$ and $\frac 1p-1<s<\frac 1p$, and
a basis for $B_{p,p}^s(\INT )$,  $\frac 12 <p\le 1$ and $\frac 1p-1<s<1$ (see Triebel \cite[Theorem 1.58]{triebel_fkt}).

\medskip

Now, let
$F_0:=V_0$, $F_n=W_n$, $n\in\NN$, and $G_n:= F_0\oplus F_1\oplus\cdots \oplus F_n$, and
$\CF_n := \sigma( F_0\oplus F_1\oplus\ldots \oplus F_n)$.
Let us denote the projection of $\xi$ 
 onto $G_n$ by $P_n$
and  onto $W_n$ by $Q_n$.
For the time being let us assume that the Radon-Nikodym derivative of the fractional Brownian motion belongs to $ B^ s_{2,2}(\INT)$.
Since the Haar wavelets are a  basis of $E:= B^ s_{2,2}(\INT)$, then for each element $x\in E$ there exists a unique sequence  $\{ \lambda_{j,k}: j\in\NN, k=1,\ldots, 2^ j\}$ such that
$$
x=\sum_{j\in\NN} \sum_{k=1}^ {2^n} \lambda_{j,k} \psi_{j,k} + \lambda _0\phi .
$$
Observe also that

$$
\xi(t) := \sum_{j=1}^\infty \sum_{k=1}^{2^j} \zeta_{j,k} \psi_{j,k}(t)+a_0\phi(t) ,\quad t\in[0,1].
$$
where $\{ \zeta_{j,i}:j\in\NN, i=1,\ldots,2 ^j\}$ is a family of
random variables defined by
$$
\zeta _{j,k}\stackrel{d}{=} \int_0^1  \psi_{j,k}(s) \, dB^ H(s),
$$
and
$$
a_0 \stackrel{d}{=} \int_0^1  \phi_{0,0}(s) \,dB^H(s).
$$
In fact, given the coefficients $\{ \zeta_{j,k}: j=1,\ldots,n, k=1,\ldots , 2^ j\}\cup\{ a_0\}$, one know the
coefficient of $\phi_{n,k}$, $k=1,\ldots,2^n$.
Since $G_n$ consists of all functions $f:[0,1)\to\RR$ that are constant on the intervals $[2^ {-n}k,2^ {-n}(k+1))$, $k=1,\ldots,2^ n-1$,
there exists random coefficients $\gamma_{n,k}$, $k=1,\ldots,2^ n-1$ such that
$$
P_n \xi =\sum_{k=0}^ {2^ n-1}\gamma_{n,k} \phi_{n,k}.
$$
It is now easy to see show that
$$
\gamma_{n,k}:= \int_0^1 \phi_{n,k}(t)dB^H(t).
$$

Since for two functions $\phi,\psi:[0,1]\to\RR$, the random variables $\xi^H(\phi)$ and $\xi^H(\psi)$
are Gaussian distributed with covariance 
$$
\EE \lk[ \xi^H(\phi)\, \xi^H(\psi)\rk]=\int_0^ 1\int_0^ 1\phi(s)\phi(t)\, |t-s|^ {2H-2}\, dt\,ds,
$$
straightforward calculations gives for $l\not = k$
\DEQS
\lqq{ \EE \lk[ \xi^ H(\psi_{j,k})\,\xi^ H(\psi_{j,l})\rk]=2^ {j}\int_{2^  {-j}k}^{2^  {-j}(k+1)}\int_{2^  {-j}l}^{2^  {-j}(l+1)}\psi_{j,k}(s)\psi_{j,l}(t)|t-s|^ {2H-2}\, dt\,ds}&&
\\
&=&
2^ {j}\frac 1 {2H-1}\int_{2^  {-j}k}^{2^  {-j}(k+1)}\lk[ (t-2^  {-j}l)^ {2H-1} - (t-2^  {-j}(l+1))\rk]\, dt
\\
&=&
2^ {j}\frac 1 {2H-1}\frac 1 {2H}\lk\{ \lk[ (2^  {-j}k-2^  {-j}l)^ {2H-1} \rk.\rk.
\\
&&{}\lk.\lk.{}- (2^  {-j}k-2^  {-j}(l+1))\rk]-\lk[ (2^  {-j}(k+1)-2^  {-j}l)^ {2H-1} - (2^  {-j}(k+1)-2^  {-j}(l+1))\rk]\rk\}
\\
&=&
2^ {j}\frac 1 {2H-1}\frac 1 {2H}\lk\{ \lk[ 2(2^  {-j}k-2^  {-j}l)^ {2H-1} \rk.\rk.
\\
&&{}\lk.\lk.{}- (2^  {-j}k-2^  {-j}(l+1)) - (2^  {-j}(k+1)-2^  {-j}l)^ {2H-1}\rk]\rk\}\sim 2^  {-j}|k-j|^ {2H-1} 2^  {-j}.
\EEQS
Hence
$$
\EE \zeta_{j,k}\zeta_{j,l} \sim 
 2^  {-j}|k-j|^ {2H-1} .
 $$
One can also easily prove that for $l = k$
\DEQS
\lqq{ \EE \lk[ \xi^ H(\psi_{j,k})\,\xi ^ H(\psi_{j,k})\rk]}
&&
\\
&=&2^ {j}\int_{2^  {-j}k}^{2^  {-j}(k+1)}\int_{2^  {-j}k}^{2^  {-j}(k+1)}\psi_{j,k}(s)\psi_{j,l}(t)|t-s|^ {2H-2}\, dt\,ds
\\
&=&
2^ {j}\frac 1 {2H-1}\frac 1 {2H}\lk\{ \lk[ (2^  {-j}k-2^  {-j}l)^ {2H-1} \rk.\rk.
\sim 2^ {1-2Hj}.
\EEQS
Using these estimates we can prove the following proposition.

\begin{proposition}\label{fbmex}
For $H\in(\frac 12,1)$ and $-\frac 12 <s<H-1$
 we have $\xi^ H\in L^ 2(\Omega;B_{2,2}^ s(\INT))$.
\end{proposition}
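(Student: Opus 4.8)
The plan is to read $\|\xi^H\|_{B_{2,2}^s}$ off the Haar–wavelet coefficients $\{\zeta_{j,k}\}$ and $a_0$ and then pass to expectations block by block. Since $-\tfrac12<s<\tfrac12$ (note $H-1<0$, so the admissible window $(-\tfrac12,H-1)$ sits inside $(-\tfrac12,\tfrac12)$), the Haar system is an unconditional basis of $B_{2,2}^s(\INT)$ — this is precisely where the lower bound $s>-\tfrac12$ enters — and the wavelet characterisation of this space gives
$$
\|\xi^H\|_{B_{2,2}^s}^2 \;\sim\; \sum_{j=1}^\infty 2^{2js}\sum_{k=1}^{2^j}|\zeta_{j,k}|^2 \;+\; |a_0|^2 .
$$
Because $B_{2,2}^s$ is an $\ell^2$–type space in the coefficients, no cross terms $\zeta_{j,k}\zeta_{j,l}$ with $l\neq k$ occur; only the diagonal second moments enter, so the off–diagonal covariances computed just above are not actually needed for this statement.

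First I would take expectations and interchange $\EE$ with the (nonnegative) double sum by Tonelli, obtaining
$$
\EE\|\xi^H\|_{B_{2,2}^s}^2 \;\sim\; \sum_{j=1}^\infty 2^{2js}\sum_{k=1}^{2^j}\EE|\zeta_{j,k}|^2 \;+\; \EE|a_0|^2 .
$$
The term $\EE|a_0|^2=\EE|\xi^H(\phi_{0,0})|^2$ is just the variance of $B^H(1)$, hence a finite constant. For the dyadic blocks I would insert the diagonal estimate established immediately before the statement, namely $\EE|\zeta_{j,k}|^2\sim 2^{(1-2H)j}$, together with the fact that there are $2^j$ indices $k$ at level $j$; this yields $\sum_{k=1}^{2^j}\EE|\zeta_{j,k}|^2\sim 2^{(2-2H)j}$.

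Substituting, the series collapses to
$$
\EE\|\xi^H\|_{B_{2,2}^s}^2 \;\sim\; \sum_{j=1}^\infty 2^{j(2s+2-2H)} \;+\; C ,
$$
a geometric series which converges exactly when $2s+2-2H<0$, i.e.\ when $s<H-1$. As this is precisely the standing hypothesis, the right-hand side is finite; and finiteness of $\EE\|\xi^H\|_{B_{2,2}^s}^2$ simultaneously shows that the wavelet series converges in $L^2(\Omega;B_{2,2}^s(\INT))$ and that its sum $\xi^H$ belongs to that space, which is the assertion.

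The step I expect to require the most care is the bookkeeping of the three competing powers of two — the Besov weight $2^{2js}$, the cardinality $2^j$ of indices per level, and the variance scaling $2^{(1-2H)j}$ — and checking that these combine to $2^{j(2s+2-2H)}$ so that the convergence threshold matches $s<H-1$ rather than some other exponent; one should also confirm that the estimate is normalisation-independent (rescaling the $\psi_{j,k}$ shifts the weight and the variance by compensating powers). Once this is settled, the interchange of expectation and summation is routine by nonnegativity, and the remaining ingredients (the basis property for $s>-\tfrac12$ and the diagonal variance bound) are quoted from the material above.
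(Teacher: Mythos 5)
Your argument is correct and is essentially identical to the paper's proof: both express $\EE\|\xi^H\|_{B^s_{2,2}}^2$ through the Haar--wavelet coefficients, insert the diagonal variance estimate $\EE|\zeta_{j,k}|^2\sim 2^{(1-2H)j}$ computed just before the proposition together with the count of $2^j$ coefficients per level, and sum the resulting geometric series $\sum_j 2^{j(2s+2-2H)}$, which converges exactly when $s<H-1$. Your added remarks (Tonelli for the interchange, the finite $a_0$ term, the role of $s>-\tfrac12$ for the Haar basis property, and normalisation bookkeeping) are sound clarifications of steps the paper leaves implicit, not a different method.
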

\begin{proof}
The proof is the result of the following  straightforward calculation
\DEQS
\EE |\xi|_{B^ s _{2,2}}^ 2 =\EE \sum_{j=0}^ \infty 2^ {2sj} \sum_{k=0}^ {2^ j}\EE \, \zeta_{j,k}\lesssim \sum_{j=0}^ \infty 2^ {2sj} {2^ j}2^ {1-2Hj}
          \lesssim
 \sum_{j=0}^ \infty 2^ {j(2s+2-2H)} {2^ j}2^ {1-2Hj}
\EEQS
Now, the sum is finite if $s+1-H<0$.
\end{proof}

\begin{remark}\label{remarkBH}
If $H\in(\frac 12,1)$ one can find a number $s\in(-\frac 12,H-1)$ such that $\xi^H \in L^ 2(\Omega;B_{2,2}^ s(\INT ))$. Since  all coefficients of $\phi_{n,k}$ and $\psi_{n,k}$ 
are Gaussian distributed,  their law are equivalent with respect to the Lebesgue measure. Now, since the Haar basis is a Schauder basis in $B_{2,2}^ s(\INT )$, Assumption \ref{ass1}
is  satisfied.
By the same arguments as used in the proof of Lemma \ref{lemmaa6}, one can show that
Assumption \ref{ass2} is also satisfied.
\end{remark}

\medskip

\begin{lemma}\label{lemma11fbm}
Let $B^H$ be a fractional Bownian motion with Hurst parameter $H>\frac 12$ and $\mu$ the probability measure on $B_{2,2}^s(\INT)$ defined by \eqref{measurefractal}.
 Let $\{ \phi_{j,k}:j\in\NN:k=1,\ldots ,2^ j\}$ be the wavelet  basis in $B_{2,2}^s(\INT)$ described in \eqref{wavelet-pa}.
Then, the measure $\mu$ is decomposable with decomposition satisfying Assumption \ref{ass1} and Assumption \ref{ass2}.
\end{lemma}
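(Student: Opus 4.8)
The plan is to run exactly the three-step scheme of the Lévy case recorded in Lemma~\ref{lemma11}, but with every Lévy-specific input replaced by its Gaussian analogue, most of which is already prepared in Proposition~\ref{fbmex} and Remark~\ref{remarkBH}. First, Proposition~\ref{fbmex} guarantees that $\xi^H$ takes values in $B_{2,2}^s(\INT)$ for the chosen range $-\tfrac12<s<H-1$, so $\mu$ is a bona fide Radon probability measure on that space. Since the Haar system is a Schauder basis of $B_{2,2}^s(\INT)$ (Triebel's theorem quoted in Section~\ref{haartrue}), the subspaces $F_0=V_0$, $F_n=W_n$ of \eqref{exactspacesbr}, together with $G_n=F_0\oplus\cdots\oplus F_n$ and $G^n=G_n^\perp$, furnish in the sense of Definition~\ref{decomposkerneldef} a decomposition $\{F_n,G^n,l_n\}_{n=0}^\infty$ of $\mu$, where $l_n(\by,\cdot)$ is the regular conditional law of the block of wavelet coefficients indexing $F_{n}$ given the coefficients indexing $G^n$. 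This disposes of decomposability.

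For Assumption~\ref{ass1} I would use that $B^H$ is a centered Gaussian process, so the finite family of coefficients $\{\zeta_{j,k}\}\cup\{a_0\}$ parametrising $\pi_{G_n}\xi^H$ is jointly Gaussian and $\mu_{G_n}$ is therefore a centered Gaussian measure on $G_n\cong\RR^{\dim G_n}$. Such a measure is equivalent to $\leb_{G_n}$, hence in particular absolutely continuous, as soon as its covariance is non-degenerate; and non-degeneracy is equivalent to strict positive definiteness of the Gram matrix of the generating functions $\{\phi_{n,k}\}$ for the bilinear form $\La\phi,\psi\Ra_H=\int_0^1\int_0^1\phi(s)\psi(t)\,|t-s|^{2H-2}\,ds\,dt$. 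For $H\in(\tfrac12,1)$ the kernel $|t-s|^{2H-2}$ is (a multiple of) the covariance form of $\xi^H$ and is strictly positive definite, so the Gram matrix of the linearly independent indicators $\phi_{n,k}$ is invertible, giving the absolute continuity of $\mu_{G_n}$ with respect to $\leb_{G_n}$. This is the content signalled in Remark~\ref{remarkBH}.

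For Assumption~\ref{ass2} I would take the reference point $Y=0$. Part~(1) is then immediate, since a non-degenerate centered Gaussian law charges every ball about its mean, whence $\mu_{G_n}(B_{G_n}(0,\delta))>0$ for all $n$ and $\delta>0$. Part~(2) carries the real work, and I would transcribe the chain Lemma~\ref{lemma1}--Lemma~\ref{lemma2}--Lemma~\ref{lemmaa6} into the Gaussian setting: establish a small-ball lower bound $\mu(\{|x|_{B^s_{2,2}}\le\ep\})>0$; upgrade it, by isolating the low-frequency part in $G_N$ and estimating the high-frequency tail $\pi_{G^n}\xi^H$ in a slightly better Besov norm $B^{s_0}_{2,2}$ with $s<s_0<H-1$ through the coefficient variances $\EE\,\zeta_{j,k}^2$ computed in Section~\ref{haartrue}, to $\mu(\{|\pi_{G^n}x|_{B^s_{2,2}}\le\ep\})\ge\delta$ for $n$ large; and finally combine this with the continuity of $\by\mapsto l_n(\by,A)$ on a large compact set and the strict positivity of the conditional Gaussian densities $l_n(\by,\cdot)\sim\leb_{G_n}$ to conclude $\mu(\{|x-x_0|_{B^s_{2,2}}\le\ep\})\ge\delta$ for every $x_0\in G_N$ and every $\ep>0$. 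This is precisely the Gaussian copy of Lemma~\ref{lemmaa6}.

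The main obstacle is Part~(2) of Assumption~\ref{ass2}, specifically the small-ball lower bound for $B^H$ in the norm of $B^s_{2,2}(\INT)$: in the Lévy proof this was imported from \cite{aurzada}, while here one must either invoke the classical Gaussian small-deviation asymptotics for fractional Brownian motion or derive the needed lower bound directly from the spectral decay of the variances $\EE\,\zeta_{j,k}^2$. The non-degeneracy required for Assumption~\ref{ass1} is conceptually routine but still rests on the strict positive definiteness of the Riesz kernel $|t-s|^{2H-2}$, which must be invoked; once these two analytic facts are in hand, everything else is a faithful line-by-line adaptation of the Lévy argument.
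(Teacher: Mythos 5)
Your proposal is correct and follows essentially the same route as the paper, whose entire ``proof'' of Lemma \ref{lemma11fbm} is Remark \ref{remarkBH}: decomposability from the Haar--Schauder basis, Assumption \ref{ass1} from the Gaussianity of the wavelet coefficients, and Assumption \ref{ass2} ``by the same arguments as in Lemma \ref{lemmaa6}''; in fact you are more careful than the paper, which never addresses the joint non-degeneracy of the coefficient covariance that your Riesz-kernel positive-definiteness argument supplies. The one place you overestimate the difficulty is the small-ball input for Assumption \ref{ass2}(2): since $\xi^H\in L^2(\Omega;B^s_{2,2}(\INT))$ by Proposition \ref{fbmex}, the tail estimate $\EE\,\lvert \pi_{G^n}\xi^H\rvert^2_{B^s_{2,2}}\to 0$ together with Chebyshev replaces the entire truncation/conditioning step of the L\'evy chain (Lemmas \ref{lemma1}--\ref{lemma2}), and the qualitative bound $\mu(\{x:\lvert x\rvert_{B^s_{2,2}}\le\ep\})>0$ is automatic for a centered Gaussian measure because zero lies in its support, so no fBm small-deviation asymptotics need be invoked.
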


\section{Zero One Laws for decomposable measures with density}

In this Section we generalize the Theorem 4 of  \cite{dineen} to
decomposable measures with decomposition as defined in Definition \ref{decomposkerneldef}. We will also identify the conditions under which a measure satisfies Assumption \ref{ass11} and Assumption \ref{ass1}.

Throughout this section  $E$ denotes and arbitrary a separable Banach space and
$\CB(E)$ the $\sigma$--algebra generated by its open sets.
Let $\mu$ be a measure on $(E,\CB(E))$ and $F$ and $G$ be two subsets of $E$ such that  $E=F \oplus G$.
 Then, there is a probability measure
$$
\mu_{(F,G)}:\CB(F)\ni A\mapsto \mu(A+G)\in[0,1].
$$
For $A\subset E$ and $y\in G$ let $A_{(F,G)}(y)=\{ x\in F: x+y\in A\}$.

As mentioned in the introduction the concept of decomposability can be extended to the notion of decomposability we introduced in Definition \ref{decomposkerneldef}.

\begin{example}\label{bevor}
Let $E$ be a separable Banach space and
 $\{e_n:n\in\NN\}$ be a Schauder basis  and $F_n:= \{ \lambda e_n:\lambda \in\RR\}$.
For each element $x\in E$ there exists a unique sequence $\{ a_n: {n\in\NN}\}$ in $\RR$ such that
$x=\sum_{n\in\NN} a_n e_n$. Let $G_n:= F_1\oplus\cdots\oplus F_n$, $G^ n=G_n^ \perp$,
$$\pi_{G_n}:E\ni x\mapsto a_1e_2+ \cdots +a_ne_n \to G_n
$$
be  a projection from $E$ onto $F_1\oplus\cdots\oplus F_n$ and
$$
\pi_{G^ n}: E\ni  x\mapsto \sum_{j\in\NN} a_{j+n}e_{j+n} \in G^n.
$$
Then, the probability measure of each $E$--valued random variable is decomposable in the sense of Definition \ref{decomposkerneldef}.
This can be shown by the following consideration.
From the Radon-Nikodym Theorem (see \cite[Theorem 6.3]{kallenberg})
for any $E$--valued random variable $X$
there exists
a probability kernel
$$
l_n:G^n \times \CB(F_1\oplus\cdots\oplus F_n) \to [0,1],
$$
such that
\begin{enumerate}
  \item
$\PP\lk( \pi_{G_n}X\in U\mid \pi_{G^ n}X=y\rk)=l_n(y,U)$ for all $U\in \CB(F_1\oplus\cdots\oplus F_n)$;
  \item for each $U\in \CB(F_1\oplus\cdots\oplus F_n)$ the mapping
$$
G^n\ni y \mapsto l_n(y,U)
$$
is $\CB(G^n)$--measurable
\end{enumerate}
\end{example}

To simplify the notation let us denote $\mu_{(G_n,G^ n)}$ by $\mu_n$ and $\mu_{(G^ n,G_ n)}$
by $\mu^n$.
Note that given a decomposition $(F_n,G_n,l_n)$  of $\mu$ it is essential that the kernel $l_n$ has a density with respect to the Lebesgue measure on $G_n$ which, as we will show in the next Lemma, follows from the absolute continuity of $\mu_n$ with respect to $\leb_{G_n}$ for any $n\in\NN$.

\del{\begin{lemma}
In the setting of Example \ref{bevor} the following holds:
For all $n\in\NN$, for any $A\in\CB(F_1\oplus\cdots \oplus F_n)$  and all $\ep>0$ there exists a closed subset $C_{n,A}^\ep$ of $G^n$ such that
$\mu^n(G^n\setminus C_{n,A}^\ep)\le \ep$ such that the function
$$
l_n:C^\ep_{n,A} \ni y \mapsto l_n(y,A)\in  [0,1],
$$
is continuous.
\end{lemma}
\begin{proof}
This follows by the fact that the measure $\mu$ is perfect, hence the measure $\mu^n$ on $G^n$ and
the fact that for all $A\in \CB(F_1\oplus\cdots \oplus F_n)$ the mapping
$$ l_n:G^n \ni y \mapsto l_n(y,A)\in  [0,1],
$$ is measurable, hence, by Theorem 4.1 \cite[p. 41]{parth} continuous.
\end{proof}
}
\medskip

\begin{lemma}\label{abscont}
Let $E$ be a separable Banach space and
 $\{e_n:n\in\NN\}$ be a Schauder basis  and $F_n:= \{ \lambda e_n:\lambda \in\RR\}$, $G_n:=F_1\oplus \cdots \oplus F_n$.
Let us assume that for all $n\in\NN$  $\mu_{G_n}$ is absolutely continuous with respect to  $\leb_{G_n}$.
Then for any $n\in\NN$,
$$\mu^ n( \{ y\in G^n: l_n(y,\cdot) \mbox{ is abs. continuous with respect to } \leb_{G_n}\})=1.
$$
In particular,
for any $U\in \CB(G_n)$ with $\mu_n(U)=0$, we have
$$
\mu^n( \{ y\in G^n : l_n(\by,U)=0\})=1.
$$
\end{lemma}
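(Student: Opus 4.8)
The plan is to read everything off from the single disintegration identity that the decomposition provides. First I would specialise the formula of Definition~\ref{decomposkerneldef} to a set of the form $A=U+G^n$ with $U\in\CB(G_n)$; then $A_n(y)=U$ for every $y$, and the formula collapses to
\begin{equation*}
\mu_{G_n}(U)=\mu(U+G^n)=\int_{G^n} l_n(y,U)\,\mu^n(dy),\qquad U\in\CB(G_n). \quad(\star)
\end{equation*}
This identity is the engine of the whole argument, so I would establish it at the outset (recalling that $\mu_{G_n}=\mu_n$ and that $\mu^n$ is the marginal appearing in the decomposition).

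The ``in particular'' clause drops out immediately from $(\star)$: if $U\in\CB(G_n)$ satisfies $\mu_n(U)=0$, then the right-hand side vanishes, and since $y\mapsto l_n(y,U)$ is nonnegative and $\CB(G^n)$-measurable, this forces $l_n(y,U)=0$ for $\mu^n$-a.e.\ $y$. Note that this half uses no absolute-continuity hypothesis whatsoever.

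For the first assertion the goal is to upgrade this to a statement in which one $\mu^n$-null exceptional set of $y$'s serves all Lebesgue-null $U$ at once, i.e.\ that $l_n(y,\cdot)\ll\leb_{G_n}$ for $\mu^n$-a.e.\ $y$. Here I would use that $G_n$ is finite dimensional, identify it with $\RR^n$, and reduce absolute continuity to a countable test. By the Besicovitch differentiation theorem a finite Borel measure $\lambda$ on $\RR^n$ is absolutely continuous with respect to $\leb_{\RR^n}$ if and only if its upper symmetric derivative
\begin{equation*}
\overline{D}\lambda(x)=\limsup_{r\downarrow 0}\frac{\lambda(B(x,r))}{\leb_{\RR^n}(B(x,r))}
\end{equation*}
is finite $\lambda$-a.e., and the $\limsup$ may be taken along rational radii. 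The idea is then to feed the countable family of balls $B(x,r)$ with rational centre and radius into $(\star)$ — combined with the standing hypothesis $\mu_{G_n}\ll\leb_{G_n}$, so that Lebesgue-null implies $\mu_{G_n}$-null — to obtain for each such ball a full-$\mu^n$-measure set of ``good'' $y$, and finally to intersect these countably many sets.

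The main obstacle is precisely this last passage. Part~2 controls $l_n(y,U)$ only for a \emph{fixed} null set $U$, whereas the set $\{x:\overline{D}l_n(y,\cdot)(x)=+\infty\}$, on which any singular part of $l_n(y,\cdot)$ must concentrate, is itself Lebesgue-null but \emph{depends on $y$}. Swapping the quantifiers — turning ``for each null $U$, $l_n(y,U)=0$ for $\mu^n$-a.e.\ $y$'' into ``for $\mu^n$-a.e.\ $y$, $l_n(y,U)=0$ for every null $U$'' — is the delicate point, and a direct appeal to $(\star)$ at the single index $n$ cannot by itself cover a $y$-dependent exceptional set. It is exactly here that the full hypothesis (absolute continuity of $\mu_{G_m}$ for \emph{all} $m$, not merely for $n$) together with the consistency of the kernels $l_n$ across levels must be brought in, and reconciling this $y$-dependence is where the real work lies.
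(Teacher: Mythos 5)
Your identity $(\star)$ and your one-line derivation of the ``in particular'' clause are correct, and in fact cleaner than the paper's own argument: the paper fixes $U$ with $\mu_{G_n}(U)=0$, invokes Lusin's theorem \cite[Theorem 4.1]{parth} to make $\by\mapsto l_n(\by,U)$ continuous on large closed sets $C^\ep_{n,U}$, introduces the closed sets $G^\ast_\ep=\{\by\in C^\ep_{n,U}: l_n(\by,U)\ge\ep\}$, and lets $\ep\to 0$ using regularity of $\mu^n$ --- all of which is superfluous, since a nonnegative measurable function with vanishing $\mu^n$-integral is $\mu^n$-a.e.\ zero, exactly as you observe. You should also be aware that this fixed-$U$ statement is \emph{all} the paper proves: its proof begins ``Fix $U\in\CB(G_n)$ with $\mu_{G_n}(U)=0$'' and produces an exceptional null set of $\by$'s depending on that $U$; the first, uniform-in-$U$ assertion (that $l_n(\by,\cdot)\ll\leb_{G_n}$ for $\mu^n$-a.e.\ $\by$) is never returned to, so the quantifier exchange you single out is left open in the paper as well.

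Your diagnosis of that exchange as the crux is exactly right, but your closing hope --- that absolute continuity of $\mu_{G_m}$ for \emph{all} $m$ together with consistency of the kernels would supply the missing step --- cannot be realized: the first assertion is false under the stated hypotheses. Take $E=\ell^2$ with its standard basis, let $\theta,\epsilon_2,\epsilon_3,\dots$ be i.i.d.\ standard Gaussian, and set $a_1=\theta$, $a_j=2^{-j}(\theta+\epsilon_j)$ for $j\ge 2$, $\mu=$ law of $X=\sum_j a_je_j$. Every $(a_1,\dots,a_n)$ is a nondegenerate Gaussian vector, so $\mu_{G_n}\ll\leb_{G_n}$ for \emph{every} $n$; yet by the strong law $\theta=\lim_m\tfrac{1}{m-1}\sum_{j=2}^m 2^ja_j$ is a.s.\ a measurable function of $\pi_{G^1}X$, so $l_1(\by,\cdot)=\delta_{\theta(\by)}$ for $\mu^1$-a.e.\ $\by$, and the set $\{\by: l_1(\by,\cdot)\ll\leb_{G_1}\}$ has $\mu^1$-measure $0$, not $1$. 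This also explains why your Besicovitch strategy stalls where it does: the singular set $S_\by$ genuinely varies with $\by$, and no countable family of test sets fed into $(\star)$ can control it, because in the example the union of the $\by$-dependent null sets is everything. In the paper's actual applications the strong conclusion is rescued not by Lemma \ref{abscont} but by extra structure of the noise: in the L\'evy case Lemma \ref{lemmaa3} constructs strictly positive densities $h_n(\cdot,\by)$ of $l_n(\by,\cdot)$ directly from the wavelet coefficients, and in the fractional Brownian case the Gaussian structure does the same (Remark \ref{remarkBH}). So your proposal proves exactly as much as the paper's proof does, more simply, and correctly identifies --- though it understates the severity of --- the gap in the remaining claim.
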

\begin{proof}
Fix $U\in \CB(G_n)$ with $\mu_{G_n}(U)=0$.
We will show that
$\mu^n( \{ \by\in G^n : l_n(y,U)>0\})=0$.
From  \cite[Theorem 4.1 ]{parth} we infer that for all $\ep>0$ there exists a subset $C_{n,U}^\ep\subset G^n$ such that $\mu^ n(G^n\setminus C_{n,U}^  \ep)\le \ep$ and
$$
l_n(\cdot,A):C^\ep_{n,U} \ni \by \mapsto l_n(\by,U)\in  [0,1],
$$
is continuous.
Now let us set
$$
G_\ep ^\ast =\{ \by\in G^n\cap C_{n,U}^\ep  : l_n(\by,U)\ge \ep\}.
$$
Since $l_n(\cdot,U)\big|_{C^\ep_{n,A}}$ is continuous and the sets $[\ep,1]$  and  $C_{n,U}^\ep$ are closed, the set $G_\ep^\ast$ is closed. Hence,
$$
0=\mu_{G_n}(U)= \mu(U+G^n)=\int_{G^n} l_n(\by,U)\,\mu^n(d\by).
$$
Since $G^ n\supset G_\ep ^\ast $, we additionally have
$$
0=\mu_n(U)=\mu(U+G_n)=\int_{G^n} l_n(\by,U)\,\mu^n(d\by)\ge
\int_{G_\ep ^\ast } l_n(\by,U)\,\mu^n(d\by).
$$
By the definition of the set $G_\ep ^\ast$ we have
$$
0\ge \ep  \, \mu^n({G_\ep ^\ast }).
$$
Since $\ep >0$, we have $ \mu^n({G_\ep ^\ast })=0$.
Now, from the closedness of $G_\ep ^\ast$  and the regularity of the measure$\mu_{G^n}$  we infer that
$$
\mu^n (\{ \by\in G^n: l_n(\by,U)>0\})=\lim_{\ep \to 0} \mu_{G^n}(G_\ep ^\ast)=0.
$$
\end{proof}

\begin{lemma}\label{abscontfff}
Let $\mu$ be a decomposable finite measure on $E$ with decomposition $\{F_n,G^n,l_n\}_{n=1}^\infty.$
Let us assume that $\mu_{G_n}$ is absolutely continuous with respect to $\leb_{G_n}$. {Then, for
 any $U\in \CF_n$ satisfying $\mu_{G_n}(U)=0$ we have
$$
\mu_{G^n}( \{ \by\in G^n : l_n(\by,U)=0\})=1.
$$}
\end{lemma}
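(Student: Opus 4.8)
The plan is to read the assertion directly off the disintegration identity built into the notion of decomposability, using nothing beyond nonnegativity of the kernel. Fix $U \in \CF_n$ (a Borel subset of $G_n$) with $\mu_{G_n}(U) = 0$. Since the set $\{\by \in G^n : l_n(\by, U) = 0\}$ is the complement in $G^n$ of $\{\by \in G^n : l_n(\by, U) > 0\}$, it suffices to show that the latter is $\mu_{G^n}$--null.

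First I would apply Definition \ref{decomposkerneldef} to the cylinder $A := U + G^n$. For $\by \in G^n$ its slice is $A_n(\by) = \{x \in G_n : x + \by \in U + G^n\}$; because $E = G_n \oplus G^n$ forces the $G_n$--component of $x + \by$ to be $x$, the membership $x + \by \in U + G^n$ holds precisely when $x \in U$, so $A_n(\by) = U$ for every $\by \in G^n$. Hence the decomposition formula yields
$$
\mu(U + G^n) = \int_{G^n} l_n(\by, U)\, \mu_{G^n}(d\by).
$$
On the other hand $\mu_{G_n}(U) = \mu(U + G^n)$ by the very definition of $\mu_{G_n}$, so the integral on the right vanishes by hypothesis.

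Thus $\int_{G^n} l_n(\by, U)\, \mu_{G^n}(d\by) = 0$. As $\by \mapsto l_n(\by, U)$ is nonnegative and measurable (it is the decomposition kernel evaluated at the fixed set $U$), the vanishing of its integral against the finite measure $\mu_{G^n}$ forces $l_n(\by, U) = 0$ for $\mu_{G^n}$--almost every $\by$, i.e. $\mu_{G^n}(\{\by \in G^n : l_n(\by, U) = 0\}) = 1$, which is the claim.

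I do not anticipate a genuine obstacle: this is exactly the integral identity already appearing inside the proof of Lemma \ref{abscont}, now extracted for a single null set $U$ in the general decomposable setting rather than for the whole family of null sets treated there. The one point needing care is the bookkeeping identifying the slice $(U + G^n)_n(\by)$ with $U$; after that, positivity of $l_n$ does everything, and the regular--conditional--probability continuity argument (\cite[Theorem 4.1]{parth}) used in Lemma \ref{abscont} is not required here. Should one prefer to mirror the proof of Lemma \ref{abscont} verbatim, one could instead make $l_n(\cdot, U)$ continuous on a set $C \subset G^n$ with $\mu_{G^n}(G^n \setminus C)$ small, localise on $\{l_n(\cdot, U) \ge \ep\} \cap C$, and bound $0 = \int_{G^n} l_n(\by, U)\, \mu_{G^n}(d\by) \ge \ep\, \mu_{G^n}(\{l_n(\cdot, U) \ge \ep\} \cap C)$ before letting $\ep \to 0$; but that is heavier than necessary.
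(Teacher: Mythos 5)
Your proof is correct, and its final step is genuinely more elementary than the paper's. Both arguments pivot on the same disintegration identity
$0=\mu_{G_n}(U)=\mu(U+G^n)=\int_{G^n} l_n(\by,U)\,\mu_{G^n}(d\by)$; the paper writes this down directly, whereas you additionally verify the slice identification $(U+G^n)_n(\by)=U$ via uniqueness of the decomposition in $E=G_n\oplus G^n$ --- a point the paper leaves implicit and which your bookkeeping handles correctly. The divergence is in how almost-everywhere vanishing is extracted. The paper, having only measurability of $\by\mapsto l_n(\by,U)$, invokes \cite[Theorem 4.1]{parth} (a Lusin-type theorem) to obtain continuity of $l_n(\cdot,U)$ on a closed set $C^\ep_{n,U}$ with $\mu_{G^n}(G^n\setminus C^\ep_{n,U})\le\ep$, shows the closed set $G^\ast_\ep=\{\by\in C^\ep_{n,U}: l_n(\by,U)\ge\ep\}$ is null via $0\ge\ep\,\mu_{G^n}(G^\ast_\ep)$, and concludes by letting $\ep\to0$ using regularity of the measure --- precisely the ``heavier than necessary'' mirror argument you sketch at the end. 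You instead use the standard fact that a nonnegative measurable function with vanishing integral is a.e.\ zero (Markov's inequality at levels $1/k$, then a countable union), which requires no continuity, no closedness, and no regularity; measurability of $l_n(\cdot,U)$ is part of the kernel definition, as you note. Your route also sidesteps a slight looseness in the paper's limit step, where $G^\ast_\ep$ varies with the Lusin set as well as with the level $\ep$, so the monotone-limit assertion there needs extra care. One remark applicable to both proofs: the hypothesis $\mu_{G_n}\ll\leb_{G_n}$ is never actually used here --- the conclusion holds for any $U$ with $\mu_{G_n}(U)=0$, absolute continuity mattering only when the lemma is fed Lebesgue-null sets, as in Lemma \ref{abscont}.
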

\begin{proof}
Let $n\in\NN$ and $U\in \CF_n$ such that $\mu_{G_n}(U)=0$. We will show that
$\mu_{G^n}( \{ \by\in G^n : l_n(\by,U)>0\})=0$.

\medskip
Firstly, note  that
by the Radon-Nikodym Theorem the mapping
$$
l_n:C^\ep_{n,U} \ni y \mapsto l_n(y,U)\in  [0,1],
$$
is measurable.
Hence, from \cite[Theorem 4.1]{parth} we infer that for all $\ep>0$ 
there exists a closed subset $C_{n,U}^\ep$ of $G^n$ such that
$\mu^n(G^n\setminus C_{n,U}^\ep)\le \ep$ and the function
$$
l_n:C^\ep_{n,U} \ni \by \mapsto l_n(\by,U)\in  [0,1],
$$
is continuous.
\del{This follows by the fact that the measure $\mu$ is perfect, hence the measure $\mu^n$ on $G^n$ and
the fact that for all $U\in \CB(F_1\oplus\cdots \oplus F_n)$ the mapping
$$ l_n:G^n \ni y \mapsto l_n(y,U)\in  [0,1],
$$ is measurable, hence, by Theorem 4.1 \cite[p. 41]{parth}, there exists a closed subset $C_{n,U}^\ep$ of $G^n$ such that $l_n(\cdot,U)|_{C_{n,U}^ \ep}$ is
  continuous and $\mu_n(C_{n,U}^ \ep)\ge 1-\ep$.
}
\medskip
Secondly, let us set
$$
G_\ep ^\ast =\{ \by\in C_{n,U}^\ep  : l_n(\by,U)\ge \ep\}.
$$
From the continuity of $l_n(\cdot,U)\big|_{C^\ep_{n,U}}$  and the fact that the sets $[\ep,1]$ and
 $C_{n,U}^\ep$ are closed we conclude the set $G_\ep^\ast$ is also closed.
Next, thanks to the definition of $\mu_{G_n}$ we obtain that
$$
0=\mu_{G_n}(U)= \mu(U+G^n)=\int_{G^n} l_n(\by,U)\,\mu_{G^n}(d\by).
$$
Furthermore, because $G_\ep ^\ast\subset C_{n,U}^\ep $ we also have
$$
0=\int_{G^n} l_n(\by,U)\,\mu_{G^n}(d\by)\ge
\int_{G_\ep ^\ast } l_n(\by,U)\,\mu_{G^n}(d\by).
$$
Invoking now the definition of the set $G_\ep ^\ast$ we obtain
$$
0\ge \ep  \mu_{G^n}({G_\ep ^\ast }).
$$
Since $\ep >0$, we have $\mu_{G^n}({G_\ep ^\ast })=0$.
From the closedness of $G_\ep ^\ast$  and the regularity of the measure $\mu^n$ we infer that
$$
\mu_{G^n} (\{ \by\in G^n: l_n(\by,U)>0\})=\lim_{\ep \to 0} \mu_{G^n}(G_\ep ^\ast)=0.
$$
Therefore,
$$
\mu_{G^n} (\{ \by\in G^n: l_n(\by,U)=0\})=1.
$$

\end{proof}


\begin{cor}\label{densityex}
Let $E$ be a separable Banach space and
 $\{e_n:n\in\NN\}$ be a Schauder basis. Put $F_n:= \{ \lambda e_n:\lambda \in\RR\}$ and  $G_n:=F_1\oplus \cdots \oplus F_n$.
Let us assume that for all $n\in\NN$ $\mu_{G_n}$ is absolutely continuous with respect to the $\leb_{G_n}$.
Then for any $n\in\NN$ there exists a function $h_n: G^n\times F_1\oplus\cdots\oplus F_n\to\RR^+_0$ such that
$\mu_{G^n}$--a.s.\
$$
l_n(\by,U) = \int_U h_n(\by,x)\mu_{G_n}(dx).
$$
\end{cor}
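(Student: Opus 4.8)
The idea is to manufacture $h_n$ as the ratio of two Lebesgue densities: the density of $\mu_{G_n}$ (which exists by hypothesis) and the $\by$--dependent density of the kernel $l_n(\by,\cdot)$ (which exists for $\mu_{G^n}$--a.e.\ $\by$ by Lemma~\ref{abscont}). The only subtlety is that a priori the passage from ``absolutely continuous with respect to $\leb_{G_n}$'' to ``absolutely continuous with respect to $\mu_{G_n}$'' requires controlling the kernel on the Lebesgue--null set where the density of $\mu_{G_n}$ vanishes, and this is exactly what the second (``in particular'') conclusion of Lemma~\ref{abscont} provides.

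First I would set $f_n:=d\mu_{G_n}/d\leb_{G_n}\ge 0$, which exists by the standing hypothesis $\mu_{G_n}\ll\leb_{G_n}$. Next, identifying $G_n$ with $\RR^n$, I would fix the jointly measurable version
$$
g_n(\by,x):=\limsup_{r\downarrow 0}\frac{l_n\big(\by,B_{G_n}(x,r)\big)}{\leb_{G_n}\big(B_{G_n}(x,r)\big)},
$$
the numerator being $\CB(G^n)\otimes\CB(G_n)$--measurable because $l_n$ is a kernel; by the Lebesgue differentiation theorem together with the first conclusion of Lemma~\ref{abscont}, for $\mu_{G^n}$--a.e.\ $\by$ the slice $g_n(\by,\cdot)$ is a density of $l_n(\by,\cdot)$, i.e.\ $l_n(\by,U)=\int_U g_n(\by,x)\,\leb_{G_n}(dx)$ for all $U\in\CB(G_n)$.

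Then I would introduce $Z:=\{x\in G_n:f_n(x)=0\}$ and observe $\mu_{G_n}(Z)=\int_Z f_n\,d\leb_{G_n}=0$. Applying the ``in particular'' part of Lemma~\ref{abscont} with $U=Z$ yields a $\mu_{G^n}$--full set of $\by$ on which $l_n(\by,Z)=0$, hence, intersecting with the previous full set, $g_n(\by,\cdot)=0$ $\leb_{G_n}$--a.e.\ on $Z$ for $\mu_{G^n}$--a.e.\ $\by$. Setting
$$
h_n(\by,x):=\begin{cases} g_n(\by,x)/f_n(x), & f_n(x)>0,\\ 0, & f_n(x)=0,\end{cases}
$$
a nonnegative jointly measurable function, one checks for such $\by$ and every $U\in\CB(G_n)$ that
$$
\int_U h_n(\by,x)\,\mu_{G_n}(dx)=\int_{U\cap\{f_n>0\}} g_n(\by,x)\,\leb_{G_n}(dx)=\int_U g_n(\by,x)\,\leb_{G_n}(dx)=l_n(\by,U),
$$
where the second equality uses that $g_n(\by,\cdot)$ vanishes $\leb_{G_n}$--a.e.\ on $Z$ and the first uses $h_n f_n=g_n$ on $\{f_n>0\}$. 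This is the asserted representation, valid $\mu_{G^n}$--a.s.

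I expect the one genuinely delicate point to be the joint measurability of $g_n$ and, relatedly, the interchange of the ``$\mu_{G^n}$--a.s.'' quantifier with the uncountable family of sets $U$. Passing through Lebesgue densities is precisely what dissolves this: it lets a single application of Lemma~\ref{abscont} to the fixed set $Z$ replace an otherwise uncontrolled $U$--dependent collection of $\mu_{G^n}$--null sets, after which the pointwise identity $h_n f_n=g_n$ on $\{f_n>0\}$ closes the argument.
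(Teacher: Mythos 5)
Your proposal is correct, and it amounts to a completed and repaired version of the paper's own argument rather than a literal match of it. The paper's proof of Corollary \ref{densityex} is two lines: it quotes Lemma \ref{abscont} in the form ``$\mu_{G^n}(\{\by: l_n(\by,U)=0\})=1$ for any ($\mu_{G_n}$--null) $U$'' and then simply asserts that the Radon--Nikodym theorem yields the density, leaving implicit exactly the two points you identify: the exceptional $\by$--set produced by Lemma \ref{abscont} depends on $U$, so one cannot pass directly from ``for each null $U$, for a.e.\ $\by$'' to ``for a.e.\ $\by$, $l_n(\by,\cdot)\ll\mu_{G_n}$''; and even granting that, applying Radon--Nikodym slice by slice only gives a family of densities indexed by $\by$, not a single jointly measurable function $h_n:G^n\times G_n\to\RR^+_0$ as the statement requires (the paper would need a measurable-disintegration argument in the style of Doob to close this). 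Your detour through Lebesgue densities repairs both defects at once: the first conclusion of Lemma \ref{abscont} (a.s.\ $l_n(\by,\cdot)\ll\leb_{G_n}$) combined with the Lebesgue differentiation theorem gives, for a.e.\ $\by$, a density valid for \emph{all} $U$ simultaneously, and the $\limsup$ of ball ratios supplies joint measurability --- just note explicitly that $(\by,x)\mapsto l_n(\by,B_{G_n}(x,r))$ is jointly measurable by the kernel property and a monotone class argument, and that the $\limsup$ may be taken along $r=1/m$ (or use monotonicity of $r\mapsto l_n(\by,B_{G_n}(x,r))$) to keep $g_n$ measurable. After that, the single fixed set $Z=\{f_n=0\}$ is the only input to the ``in particular'' clause of Lemma \ref{abscont}, which eliminates the $U$--dependent family of null sets, and the verification $\int_U h_n(\by,x)\,\mu_{G_n}(dx)=l_n(\by,U)$ is exactly as you compute. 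Net effect: your argument actually delivers the function $h_n$ on $G^n\times G_n$ with the asserted $\mu_{G^n}$--a.s.\ representation, whereas the paper's proof, read literally, only asserts a.e.\ existence of slice densities.
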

\begin{proof}
From
$$
\mu_{G^n} (\{ \by\in G^n: l_n(\by,U)=0\})=1, \text{ for any } U\in \CB( G_n),
$$
follows the corollary's assertion.
Indeed the above identity implies the existence of a Radon-Nikodyn derivative. In particular, it holds that
\DEQS
\mu_{G^n} \Big(\big \{ \by\in G^n: \mbox{ there exists a mapping $h_n(\by,\cdot):G_n\to \RR$}\hspace{4cm}
\\ \hspace{5cm} \mbox{  such that } l_n(\by,U)=\int_U h_n(\by,x)\mu_{G_n}(dx)\big\}\Big)=1.
\EEQS
\end{proof}

\begin{definition}
We call a set $U\in \CB^\mu(E)$ a finite zero one $\mu$--set if and only if for all $n\in\NN$
$$
\mu_{G^n} \lk\{ \by\in G^n: \mu_{G_n}(U_n(\by))=0\mbox{ or } 1 \rk\}=1,
$$
where $U_n(\by)=U_{(F_1\oplus\cdots\oplus F_n,G^n)}(\by)$.

\end{definition}

\del{
\begin{example} Let us assume that the measure on $(F_n,\CB(F_n))$ induced by $\pi_n^{-1}\circ \mu$, where   $\pi_n:E \mapsto F_n$
is absolutely continuous with the respect to the Lebesgue measure. Then
$$ A=\cup_{n\in\NN}\{ \lambda e_n:\lambda\in\QQ\} $$
or
$$ A=\cup_{n\in\NN}\{ \lambda e_n:\lambda\in\RR\setminus \QQ\} $$
are  finite zero one $\mu$--sets.
\end{example}
}

 Let us now present the generalization of Theorem 4 in \cite{dineen}.

\begin{theorem}\label{theorem4dineen}
Let  $\{ F_n,G^n, l_n\}_{n=1}^\infty$ be a decomposition for $\mu$  such that for any $n\in\NN$
$\mu_{G_n}$ is absolutely continuous with respect to $\leb_{F_1\oplus\cdots\oplus F_n}$. Let $F_\infty = \cup_{n\in\NN} \{ F_1+F_2+\cdots + F_n\}$.
If $U$ is a finite zero one $\mu$ measurable subset of $E$, then there exists $B\in\CB(E)$ such that $B+F_\infty=B$ and $\mu(B)=\mu(U)$.

\del{Moreover,
\begin{enumerate}
  \item if for all $n\in\NN$ and $y\in G^n$
  $\mu_{G_n}(U_n(y))=1$ implies $U_n(y)=F_1+F_2+\cdots +F_n$,
  then we can choose $B\subset U$.

  \item if for all $n\in\NN$ and $y\in G^n$
  $\mu_{G_n}(U_n(y))=0$ implies $U_n(y)=\emptyset$,
  then we can choose $B\supset U$.

  \item $\{ F_n,G^n\}_{n=1}^\infty$ is also for $\nu$ a decomposition and $\nu$ is finitely equivalent to $\mu$ and $U$ is a finite zero one set $\nu$ set, then $\nu(B)=\nu(U)$.
\end{enumerate}
}

\end{theorem}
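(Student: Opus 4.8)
The plan is to recast the statement in the language of conditional expectations and to produce the set $B$ as a level set of a tail-measurable reverse martingale limit. I work on the probability space $(E,\CB^\mu(E),\mu)$ and, for each $n$, introduce the sub-$\sigma$-algebra $\CT_n:=\sigma(\pi_{G^n})$. Since $G^{n+1}\subset G^n$, knowing $\pi_{G^{n+1}}$ carries less information than knowing $\pi_{G^n}$, so $\CT_{n+1}\subset\CT_n$; I set $\CT_\infty:=\bigcap_n\CT_n$. The first observation is that a set lies in $\CT_n$ exactly when it is of the form $\pi_{G^n}^{-1}(C)$, and such a set is invariant under translation by $G_n$; consequently every $\CT_\infty$-measurable set is invariant under translation by $\bigcup_n G_n=F_\infty$, and it is automatically Borel because each $\pi_{G^n}$ is Borel. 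Thus it suffices to produce a $\CT_\infty$-measurable set $B$ with $\mu(B)=\mu(U)$.

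Next I would identify $\EE_\mu[1_U\mid\CT_n]$. Applying the decomposition formula of Definition \ref{decomposkerneldef} to $A=U\cap\pi_{G^n}^{-1}(C)$, and noting that $A_n(y)=U_n(y)$ for $y\in C$ and $A_n(y)=\emptyset$ otherwise, one obtains $\mu\bigl(U\cap\pi_{G^n}^{-1}(C)\bigr)=\int_C l_n(y,U_n(y))\,\mu_{G^n}(dy)$ for every $C\in\CB(G^n)$. Since $\mu_{G^n}$ is the law of $\pi_{G^n}$ under $\mu$, this says precisely that the $\CT_n$-measurable function $x\mapsto l_n(\pi_{G^n}x,\,U_n(\pi_{G^n}x))$ is a version of $\EE_\mu[1_U\mid\CT_n]$.

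The crucial step is to show this conditional expectation is $\{0,1\}$-valued $\mu$-a.s. The finite zero--one hypothesis gives $\mu_{G_n}(U_n(y))\in\{0,1\}$ for $\mu_{G^n}$-a.e. $y$, while Corollary \ref{densityex} (which uses the standing absolute continuity of $\mu_{G_n}$ with respect to $\leb_{G_n}$) yields, for $\mu_{G^n}$-a.e. $y$, a density $h_n(y,\cdot)\ge 0$ with $l_n(y,V)=\int_V h_n(y,x)\,\mu_{G_n}(dx)$; in particular $l_n(y,\cdot)\ll\mu_{G_n}$. Combining the two, for a.e. $y$ one gets $l_n(y,U_n(y))=0$ when $\mu_{G_n}(U_n(y))=0$, and $l_n(y,U_n(y))=l_n(y,G_n)=1$ when $\mu_{G_n}(U_n(y))=1$; hence $\EE_\mu[1_U\mid\CT_n]=1_{B_n}\circ\pi_{G^n}$ a.s., where $B_n:=\{y\in G^n:\mu_{G_n}(U_n(y))=1\}$. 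This absolute-continuity bridge, turning a zero--one statement for the marginal $\mu_{G_n}$ into one for the kernel $l_n$, is the step I expect to be the main obstacle: the null set $U_n(y)$ varies with $y$, so Lemma \ref{abscontfff} cannot be invoked verbatim, and it is exactly Corollary \ref{densityex} that supplies the absolute continuity uniformly in the varying section.

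Finally I would apply the reverse (downward) martingale convergence theorem to the decreasing family $\CT_n\downarrow\CT_\infty$: $\EE_\mu[1_U\mid\CT_n]\to\EE_\mu[1_U\mid\CT_\infty]$ $\mu$-a.s.\ and in $L^1(\mu)$. Setting $Z:=\limsup_n\EE_\mu[1_U\mid\CT_n]$ gives a genuinely $\CT_\infty$-measurable function equal to this limit a.s.; as an a.s.\ limit of $\{0,1\}$-valued functions, $Z\in\{0,1\}$ a.s. I then define $B:=\{Z=1\}$. Then $B\in\CT_\infty$, so $B$ is Borel and $B+F_\infty=B$, while $\mu(B)=\int_E Z\,d\mu=\lim_n\int_E\EE_\mu[1_U\mid\CT_n]\,d\mu=\int_E 1_U\,d\mu=\mu(U)$, which completes the argument.
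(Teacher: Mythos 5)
Your proposal is correct and is in substance the paper's own argument recast in martingale language: your identification $\EE_\mu[1_U\mid \CT_n](x)=l_n\bigl(\pi_{G^n}x,\,U_n(\pi_{G^n}x)\bigr)$ together with the absolute-continuity bridge shows this conditional expectation equals $1_{B_n}$ with $B_n=G_n+U^n$, $U^n=\{y\in G^n:\mu_{G_n}(U_n(y))=1\}$ --- precisely the sets the paper constructs --- and your $B=\{Z=1\}$ is $\limsup_n B_n$, whereas the paper takes $B=\liminf_n B_n$ and replaces the downward martingale convergence theorem by the elementary facts that $\mu(U\bigtriangleup B_n)=0$ for every $n$ and continuity of measure. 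Both routes turn on the same key step, which you correctly isolate: the a.e.\ absolute continuity of $l_n(y,\cdot)$ with respect to $\mu_{G_n}$ (Corollary \ref{densityex}, resp.\ Lemma \ref{abscont} plus Radon--Nikodym in the paper) converts the section-wise zero--one property into $l_n(y,U_n(y))\in\{0,1\}$ for $\mu_{G^n}$-a.e.\ $y$, the point where Lemma \ref{abscontfff} alone, with its fixed set $U$, would not suffice.
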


\begin{proof}
The proof is very similar to the proof of \cite[Theorem 4]{dineen}.
Let us assume $U\in\CB(E)$. For fix $n\in\NN$ we set
 $U^n = \lk\{ y\in G^n: \mu_{G_n}(U_n(y))=1\rk\}$,
$$G_n=F_1\oplus F_2\oplus\cdots \oplus F_n = \mbox{linear span of} \cup_{k=1}^n F_k,
$$
 $B_n=G_n+U^n$ and $B=\liminf_{n\to\infty} B_n = \cup_{n=1}^ \infty \lk\{ \cap_{m\ge n} B_m\rk\}$.
For the time being let us assume that
\DEQSZ\label{claim}
\mu(U)=\mu(B_n).
\EEQSZ
 Then,
\begin{itemize}
  \item $\mu(U \bigtriangleup B_n)=0$ for all $n\in\NN$,
  \item and $\mu(U)=\mu(B_n)\ge \mu(\cap _{m\ge n} B_m) \ge \mu(U)$,
  \item $\mu(B)=\lim_{n\to\infty} \mu(\cap_ {m\ge n}B_m)$.
\end{itemize}
Since $\mu$ is regular we additionally have that
$$\mu(B)=\lim_{n\to\infty} \mu(\cap_ {m\ge n}B_m) \ge \lim_{n\to\infty} \mu(B_n)=\mu(U),
$$
from which the assertion of Theorem \ref{theorem4dineen} follows.

\medskip

Now it remains to prove \eqref{claim}. To this end,  observe first that because of Lemma \ref{abscont} the kernel $l_n$  is $\mu^ n$--a.s.\  absolutely continuous on $G_n$.
Hence, by the Radon-Nikodym Theorem for $\mu^ n$--a.s.\  there exists
a probability kernel
$$
h_n:G^n\times G_n 
 \to \RR^ +_0, 
$$
such that
$$
\mu(U) = \int_{G^n} \int_{U_n(y)} 
 h_n(\by,x)\,\mu_{G_n}(dx) \mu_{G^ n} (d\by) . 
$$
Then, by using $B_n=G_n\oplus U_n$ we obtain that
\DEQS
\lqq{ \mu(U)=\int_{G^n}\int_{G_n} 1_U(x+\by) h_n(y,x)\, \mu_{G_n}(dx)\,\mu_{G^ n}(d\by)
}
&&
\\
&=&\int_{G^n} \int_{G_n} 1_{U_n(\by)\oplus U^ n } (x+y) h_n(\by,x)\, \mu_{G_n}(dx)\,\mu_{G^ n}(d\by)
\\
&=&\int_{G^n} \int_{G_n} 1_{(U_n(\by)\oplus U^ n)\cap B_n } (x+\by) h_n(\by,x)\, \mu_{G_n}(dx)\,\mu_{G^ n}(d\by)
\\
&=&\int_{G^n} \int_{G_n} 1_{( U_n(\by) \cap G_n) \oplus U^ n } (x+\by) h_n(\by,x)\, \mu_{G_n}(dx)\,\mu_{G^ n}(d\by)
\\
&=&\int_{U^ n} l_n(\by, U_n(\by) \cap G_n ) \, \mu_{G_n}(dx)\,\mu_{G^ n}(d\by)
\\&=& \int_{U^ n} l_n(\by,  G_n ) \, d\mu_{G_n}(x)= \mu(B_n).
\EEQS

\end{proof}


\begin{thebibliography}{10}


\bibitem{armen}
A.~Agrachev, S.~Kuksin, A.~Sarychev, and A.~Shirikyan.
\newblock On finite-dimensional projections of distributions for solutions of
  randomly forced 2{D} {N}avier-{S}tokes equations.
\newblock {\em Ann. Inst. H. Poincar\'e Probab. Statist.}, 43(4):399--415,
  2007.


\bibitem{as}
A.~Agrachev, and  A.~Sarychev.
\newblock Navier-Stokes equation: controllability by means of low mode forcing.
\newblock {\em J. Math. Fluid Mech.}, 7:108--152,
  2005.

\bibitem{apple-int}
D.~Applebaum,
\newblock \textit{ L\'evy processes and stochastic integrals in Banach spaces},
\newblock { Probability and Mathematical Statistics}, {27}:75-88 (2001).

\bibitem{aurzada}
F.~Aurzada and S.~Dereich.
\newblock Small deviations of general {L}\'evy processes.
\newblock {\em Ann. Probab.}, 37(5):2066--2092, 2009.

\bibitem{brz1}
Z.~Brze{\'z}niak, M.~Capi{\'n}ski, and F.~Flandoli.
\newblock Pathwise global attractors for stationary random dynamical systems.
\newblock {\em Probab. Theory Related Fields}, 95(1):87--102, 1993.

\bibitem{ehz}
Z.~Brze{\'z}niak, E.~Hausenblas, and J.~Zhu.
\newblock 2{D} stochastic {N}avier-{S}tokes equations driven by jump noise.
\newblock {\em Nonlinear Anal.}, 79:122--139, 2013.


\bibitem{daub}
I.~Daubechies.
Ten lectures on wavelets.
CBMS-NSF Regional Conference Series in Applied Mathematics. 61. Philadelphia, PA: SIAM, Society for Industrial and Applied Mathematics.
 (1992).

\bibitem{AD-chapter}A. Debussche. Ergodicity results for the stochastic Navier-Stokes equations: An introduction. In \newblock{ Topics in Mathematical Fluid Mechanics} 23?108. Lecture Notes in Mathematics 2073. Springer, Heidelberg, 2013.

\bibitem{deb}
A.~Debussche, and M.~Romito.
\newblock Existence of densities for the $3D$--Navier--Stokes equations driven by Gaussian noise.
\newblock {\em Probability Theory and Related Fields}, 158:575-596, 2014.

\bibitem{dapratozab1996}
G.~Da~Prato and J.~Zabczyk.
\newblock {\em Ergodicity for infinite-dimensional systems}, volume 229 of {\em
  London Mathematical Society Lecture Note Series}.
\newblock Cambridge University Press, Cambridge, 1996.

\bibitem{dineen}
S.~Dineen.
\newblock Zero one laws for probability measures on locally convex spaces.
\newblock {\em Math. Ann.}, 243(2):95--102, 1979.




\bibitem{wir}
Fernando, P., Hausenblas, E., and Razafimandimby, P.
Analytic properties of Markov semigroup generated by stochastic differential equations driven by L\'evy processes.
{\em Potential Anal.} 46, No. 1, 1-21 (2017).


\bibitem{franco2}
F.~Franco; M.~Bohdan
\newblock Ergodicity of the 2-D Navier-Stokes equation under random perturbations.
\newblock {\em Commun. Math. Phys.}, 172, No.1, 119--141, 1995.


\bibitem{franco}
F.~Flandoli.
\newblock {Dissipativity and invariant measures for stochastic Navier-Stokes
  equations.}
\newblock {\em NoDEA, Nonlinear Differ. Equ. Appl.}, 1(4):403--423, 1994.


\bibitem{Temam-RB}
Foias, C., Manley, O., Rosa, R., and  Temam, R.
\newblock {\em  Navier-Stokes equations and turbulence.}
 Encyclopedia of Mathematics and its Applications, 83. Cambridge University Press, Cambridge,  2001.


\bibitem{martin1}
M.~Hairer, and J.~Mattingly.
\newblock {\em  Ergodicity of the 2D Navier-Stokes equations with degenerate stochastic forcing}.
Ann. Math. (2) 164, No. 3, 993--1032 (2006).

\bibitem{kahane}
J.-P.~ {Kahane} and P.-G.~{Lemari\'e-Rieusset}.
\newblock {\em {Fourier series and wavelets.}}
\newblock London: Gordon and Breach Publishers, 1996.



\bibitem{kallenberg}
O.~Kallenberg.
\newblock {\em Foundations of modern probability}.
\newblock Probability and its Applications (New York). Springer-Verlag, New
  York, second edition, 2002.

\bibitem{kuksin}
S.~B.~Kuksin.
\newblock Diffeomorphisms of function spaces that correspond to quasilinear
  parabolic equations.
\newblock {\em Mat. Sb. (N.S.)}, 117(159)(3):359--378, 431, 1982.

\bibitem{SK+AS}S.~Kuksin and A.~Shirikyan. \newblock{\em Mathematics of two-dimensional turbulence.}
Cambridge Tracts in Mathematics, 194. Cambridge University Press, Cambridge,  2012.

\bibitem{nsfractal}
Liqun {Fang}, P.~{Sundar}, and Frederi~G. {Viens}.
\newblock {Two-dimensional stochastic Navier-Stokes equations with fractional
  Brownian noise.}
\newblock {\em {Random Oper. Stoch. Equ.}}, 21(2):135--158, 2013.


\bibitem{mat}
J.~Mattingly, and E.~Pardoux.
Malliavin calculus for the stochastic 2D Navier-Stokes equation.
{\em Comm. Pure Appl. Math.}, 59 (2006), no. 12, 1742-1790.

\bibitem{parth}
K.~R. Parthasarathy.
\newblock {\em Probability measures on metric spaces}, volume~3 of {\em
  Probability and Mathematical Statistics}.
\newblock Academic Press, Inc., New York-London, 1967.


\bibitem{PESZAT+ZABZCYK}
S.~Peszat and J.~Zabczyk,
\newblock{\em Stochastic partial differential equations with L\'evy noise,}  volume~ \textbf{113} of
\newblock {\em Encyclopedia of Mathematics and its Applications}, Cambridge University Press, Cambridge, 2007.

\bibitem{runst}
T.~Runst and W.~Sickel,
\newblock {\sc {Sobolev spaces of fractional order, Nemytskij operators and
  nonlinear partial differential equations.}}, volume~{3} of {\em de Gruyter
  Series in Nonlinear Analysis and Applications}.
\newblock Berlin: de Gruyter, 1996.

\bibitem{sato}
Ken-iti Sato.
\newblock {\em L\'evy processes and infinitely divisible distributions},
  volume~68 of {\em Cambridge Studies in Advanced Mathematics}.
\newblock Cambridge University Press, Cambridge, 1999.
\newblock Translated from the 1990 Japanese original, Revised by the author.


\del{\bibitem{0568.35002}
R.~Temam.
\newblock {\em {Navier-Stokes equations. Theory and numerical analysis. 3rd
  (rev.) ed.}}
\newblock {Studies in Mathematics and its Applications, Vol. 2. Amsterdam-New
  York- Oxford: North-Holland}, 1984.}

\bibitem{Temam_2001}
R.~Temam.
\newblock {\em Navier-{S}tokes equations}.
\newblock AMS Chelsea Publishing, Providence, RI, 2001.
\newblock Theory and numerical analysis, Reprint of the 1984 edition.

\bibitem{triebel_fkt}
H.~Triebel.
\newblock {\em Theory of function spaces. {III}}, volume 100 of {\em Monographs
  in Mathematics}.
\newblock Birkh\"auser Verlag, Basel, 2006.


\bibitem{triebel_wavelets}
H.~Triebel.
\newblock {\em Function spaces and wavelets on domains.},
{European Mathematical Society (EMS)},  {2008}.


\end{thebibliography}

\end{document}